\crefname{equation}{}{}
\Crefname{equation}{}{}
\crefname{algocf}{Algorithm}{Algorithms}
\crefname{figure}{Figure}{Figures}
\numberwithin{equation}{section}
\theoremstyle{plain}
\newtheorem{thm}{Theorem}[section]
\newtheorem{lem}[thm]{Lemma}
\newtheorem{prop}[thm]{Proposition}
\theoremstyle{remark}
\newtheorem{rem}{Remark}[section]
\theoremstyle{definition}
\newtheorem{defn}[thm]{Definition}
\crefname{thm}{Theorem}{Theorems}
\crefname{lem}{Lemma}{Lemmas}
\crefname{exm}{Example}{Examples}
\DeclareMathOperator*{\esssup}{ess\,sup}
\DeclareMathOperator{\supp}{supp}
\DeclareMathOperator{\Expect}{\mathbb E}
\DeclareMathOperator{\Law}{Law}
\DeclareMathOperator{\Ent}{Ent}
\DeclareMathOperator{\Var}{Var}
\DeclareMathOperator{\Id}{Id}
\newcommand{\proofstep}[1]{\medbreak{\noindent\itshape{#1}.}}
\title{Uniform-in-time propagation of chaos\\
for kinetic mean field Langevin dynamics%
\thanks{
The second named author's research is supported by
NSFC under the project No.~12371473.
The third named author's research is supported by
Finance For Energy Market Research Centre.}\
\thanks{The authors would like to thank an anonymous reviewer
for pointing out an error in a previous version of the paper.}
}
\author[1]{Fan Chen\,\orcidlink{0000-0003-0082-7908}}
\author[2]{Yiqing Lin}
\author[3]{Zhenjie Ren\,\orcidlink{0000-0003-4656-4074}}
\author[4]{Songbo Wang\,\orcidlink{0009-0009-3214-3587}}
\affil[1]{School of Mathematical Sciences, Shanghai Jiao Tong University,
Shanghai, China}
\affil[2]{MOE-LSC, Shanghai Jiao Tong University, Shanghai, China}
\affil[3]{CEREMADE, Université Paris-Dauphine, PSL, Paris, France}
\affil[4]{CMAP, École polytechnique, IP Paris, Palaiseau, France}
\begin{document}

\maketitle

\begin{abstract}
We study the kinetic mean field Langevin dynamics
under the functional convexity assumption of the mean field energy functional.
Using hypocoercivity,
we first establish the exponential convergence of the mean field dynamics
and then show the corresponding \(N\)-particle system converges
exponentially in a rate uniform in \(N\) modulo a small error.
Finally we study the short-time regularization effects of the dynamics
and prove its uniform-in-time propagation of chaos property
in both the Wasserstein and entropic sense.
Our results can be applied to the training of
two-layer neural networks with momentum
and we include the numerical experiments.
\end{abstract}

\bigskip

\noindent\textit{\href{https://mathscinet.ams.org/mathscinet/msc/msc2020.html}{MSC2020 Subject Classifications}:}
60J60, 60K35 (primary) 35B40, 35H10, 35Q83, 35Q84 (secondary)

\bigskip

\noindent\textit{Keywords:}
Langevin diffusion, mean field interaction,
convergence to equilibrium, hypocoercivity,
propagation of chaos,
logarithmic Sobolev inequality

\bigskip

\tableofcontents

\section{Introduction}
\label{sec:intro}

Training neural networks by momentum gradient descent
has proven to be effective in various applications
\cite{sutskever2013importance,kingma2014adam,ruder2016overview}.
However, despite their excellent performance,
the theoretical understanding of those algorithms remains elusive.
Recently, extensive researches have been conducted to model
the loss minimization of neural networks as a mean field optimization problem
\cite{mei2018mean,chizat2018global,rotskoff2018neural,HRSS19},
with most characterizing gradient descent algorithms
as \emph{overdamped mean field Langevin} (MFL) dynamics.
In this paper, we will focus on \emph{kinetic} dynamics instead,
which corresponds to momentum gradient descent
in the context of machine learning \cite{polyak1964some,kovachki2021continuous}.
Classical studies, such as \cite{hypocoer,ma2021there},
have explored the exponential convergence of linear kinetic Langevin dynamics
based on hypocoercivity and functional inequalities.
The kinetic MFL dynamics is studied in \cite{kazeykina2020ergodicity}
to model the momentum gradient descent for the training of neural networks
and its convergence to the unique invariant measure
is proven without a quantitative rate.
The present work studies both the quantitative long-time behavior
of the kinetic MFL dynamics
and its uniform-in-time \emph{propagation of chaos} (POC) property,
under a \emph{functional convexity} assumption,
and we aim to provide a theoretical justification for
the momentum algorithm's efficiency in practice.

\subsection{Settings and main results}
\label{sec:preview}

We give an informal preview of our settings and main results in this section.
Let \(F:\mathcal P_2(\mathbb R^d) \to \mathbb R\)
be a \emph{mean field functional}
and denote by
\(D_m F : \mathcal P_2(\mathbb R^d) \times \mathbb R^d \to \mathbb R^d\)
its \emph{intrinsic derivative}.
We aim to investigate the long-time behavior of the kinetic MFL defined by
\begin{align*}
dX_t &= V_t dt, \\
dV_t &= - V_t dt - D_m F\bigl(m_t^x, X_t\bigr) dt + \sqrt 2 dW_t,
&&\text{where $m^x_t = \Law(X_t)$,} \\
\intertext{and its associated \(N\)-particle system defined by}
dX^i_t &= V^i_t dt \\
dV^i_t &= - V^i_t dt - D_m F \bigl(\mu_{\mathbf X_t}, X^i_t\bigr) dt
+ \sqrt 2 dW^i_t,
&&\text{where $\mu_{\mathbf X_t} = \frac 1N \sum_{j=1}^N \delta_{X^j_t}$.}
\end{align*}
Here \(W_t\), \((W_t^i)_{i=1}^N\) are independent \(d\)-dimensional Brownian motions.
Denote \(m_t = \Law(X_t, V_t)\)
and \(m^N_t = \Law\bigl(X^1_t,\ldots,X^N_t,V^1_t,\ldots,V^N_t\bigr)\),
and we suppose the initial conditions
\(m_0\) and \(m^N_0\) have finite second moments.
We wish to show the convergence \(m^N_t \to m_t^{\otimes N}\)
when \(N \to +\infty\) in a uniform-in-\(t\) way.

We assume
\begin{itemize}
\item the mean field functional \(F\) is convex in the functional sense;
\item its intrinsic derivative \( (m,x) \mapsto D_m F(m,x)\)
is jointly Lipschitz with respect to the \(L^1\)-Wasserstein distance.
\item for every measure \(m \in \mathcal P_2(\mathbb R^d)\),
the probability measure proportional to
\(\exp\bigl(-\frac{\delta F}{\delta m}(m,x)\bigr) dx\)
satisfy a \emph{logarithmic Sobolev inequality} (LSI)
with a constant uniform in \(m\).
\item its second and third-order functional derivatives satisfy certain bounds.
\end{itemize}
Under these assumptions, we are able to obtain
\begin{itemize}
\item when \(t \to +\infty\), the mean field flow \(m_t\) converges
exponentially to the mean field invariant measure \(m_\infty\);
\item when \(t \to +\infty\), the \(N\)-particle flow \(m^N_t\) converges
approximately to the \(N\)-tensorized mean field invariant measure
\(m_\infty^{\otimes N}\), with an exponential rate uniform in \(N\);
\item if \(\frac 1N W_2^2\bigl(m^N_0, m_0^{\otimes N}\bigr) \to 0\)
when \(N \to +\infty\),
then \(\sup_{t \geq 0} \frac 1N W_2^2\bigl(m^N_t, m_t^{\otimes N}\bigr) \to 0\)
and \(\sup_{t \geq 1} \frac 1N H\bigl(m^N_t\big|m_t^{\otimes N}\bigr) \to 0\)
when \(N \to +\infty\).
\end{itemize}

\subsection{Related works}
\label{sec:related-works}

We give in this section a short review of the recent progresses
in the long-time behavior and the uniform-in-time propagation of chaos property
of McKean--Vlasov dynamics, with an emphasis on kinetic ones.
We refer readers to \cite{chaintron2022propagation1,chaintron2022propagation2}
for a more comprehensive review of propagation of chaos.

\paragraph{Coupling approaches.}

The coupling approach involves constructing a joint probability
of the mean field and \(N\)-particle systems to allow comparisons between them.
The \emph{synchronous coupling} method is employed in \cite{bolley2010trend}
and the uniform-in-time POC is shown by assuming the strong monotonicity
of the drift and the smallness of the mean field interaction.
The strong monotonicity is then relaxed by the \emph{reflection coupling} method
in \cite{egz2019}
and we refer readers to
\cite{schuh2022global,guillin2022convergence,kazeykina2020ergodicity}
for further developments.
Let us remark that the synchronous coupling gives often sharp contraction rates
under strong convexity assumptions,
while the reflection coupling allows us to treat dynamics of more general type
but gives far-from-sharp contraction rates.

\paragraph{Functional approaches.}

Another approach to uniform-in-time POC is the functional one,
and this is also the major approach of this paper.
In this situation in order to
study the long-time behaviors and propagation of chaos properties,
we construct appropriate (Lyapunov) functionals
and investigate the change of their values along the dynamics.
The relative entropy is used as the functional in \cite{monmarche2017long}
and its follow-up work \cite{guillin2021uniform}
to study kinetic McKean--Vlasov dynamics with regular interactions.
It is worth noting that the relative entropy approach has been successful
in handling singular interactions,
thanks to the groundbreaking work of Pierre-Emmanuel Jabin and Zhenfu Wang
\cite{jabin2018quantitative},
and we refer the readers to
\cite{guillin2021uniformvortex,decourcel2023sharp,rosenzweig2023global}
for recent developments.
However, we are not aware of any works using the relative entropy functional
(or its modifications) to study kinetic diffusions with singular interactions.

\paragraph{Comparison to \cite{ulpoc}.}

The present paper is a continuation of our previous work \cite{ulpoc},
where the overdamped version of mean field Langevin dynamics is studied,
and they share a number of key features.
We show the exponential convergence of the particle system using
the same componentwise decomposition of Fisher information
and the same componentwise log-Sobolev inequality.
The uniform-in-time propagation of chaos property
for both dynamics is then obtained by combining the exponential convergence
of the mean field and particle flow.
This paper is also different from \cite{ulpoc} in a number of aspects.
First, as the dynamics is generated by a hypoelliptic operator
instead of an elliptic one, we use hypocoercivity to recover
the exponential convergence.
Second, since we are not able to show hypercontractivity
of the kinetic dynamics (let alone reverse hypercontractivity),
we prove the entropic propagation of chaos
by studying its short-time regularization effects.
In this way we no longer restrict the initial condition of
the mean field dynamics,
but as a trade-off we require a higher-order regularity in measure of
the energy functional.
Finally, following in a remark in \cite{ulpoc},
we use an approximation argument to remove the condition
on the higher-order spatial derivatives in this work.

\subsection{Main contributions}
\label{sec:contributions}

\paragraph{Hypocoercivity for mean field systems.}

We extend the studies of the linear Fokker--Planck equation in \cite{hypocoer}
to the dynamics with general (but always regular) mean field interactions.
In particular, we do not suppose the interaction is
in form of a two-body potential,
which stands in contrast with \cite[Theorem 56]{hypocoer} and
\cite{monmarche2017long,guillin2021uniform}.
Moreover, in hypocoercive computations,
we find that the contributions from the mean field interaction
can always be dominated by the ``diagonal'' terms in the Fisher information,
already present in the case of linear dynamics.
Hence using the convexity of energy,
we are able to derive the hypocoercivity without restrictions
on the size of the interaction.
Furthermore, our assumptions imply a uniform-in-\(N\) bound
on the operator norm of the second-order derivatives of the effective potential
driving the \(N\)-particle system,
and the entropic hypocoercivity is consequently uniform in \(N\).
This is different from the situation of \(L^2\)-hypocoercivity,
where the condition given by Villani \cite[(7.3)]{hypocoer}
gives dimension-dependent constants
and therefore is unsuitable for studies of particle systems,
as remarked in \cite{guillin2021kinetic}.
Finally, let us mention that we derive the entropic hypocoercivity
under minimal regularity assumptions,
made possible by our approximation argument
(of functions \emph{and} of mean field functionals)
and the calculus in Wasserstein space developed in \cite{gf}.

\paragraph{Regularization in short time.}

We obtain two short-time regularization results
for the kinetic mean field dynamics.
The first, from Wasserstein to entropy, is a consequence of
the \emph{logarithmic Harnack's inequality},
obtained by applying the \emph{coupling by change of measure} method
of Panpan Ren and Feng-Yu Wang in \cite{ren2021exponential}
to the mean field and \(N\)-particle diffusions.
We remark also that very recently a similar inequality
(\cite[(3.13)]{huang2023coupling})
is proved for the propagation of chaos
of non-degenerate McKean--Vlasov diffusions.
The second regularization, from entropy to Fisher information,
is obtained by adapting Hérau's functional in \cite{hypocoer}
to our mean field setting and follows from the same hypocoercive computations
as we prove the convergence of the mean field flow.
We stress that although much stronger regularization phenomena are present,
for example from measure initial values to \(L^p\) for every \(p > 1\)
and to \(H^k\) for every \(k \geq 1\),
our results have the advantage of growing at most linearly in dimension,
making them suitable for studying the \(N\)-particle systems
under the limit \(N \to +\infty\).

\paragraph{Propagation of chaos.}

Finally, using the exponential convergence and the short-time regularizations,
we derive the propagation of chaos for the kinetic MFL,
i.e.\ bounds on the distances between the particle system
and the mean field system.
In particular, the initial value of the both systems can be arbitrary measures
of finite second moments without any further regularity constraints.
Moreover, the error terms do not have any dimension-dependence.
It is noteworthy that our approach allows us to not rely on a uniform-in-time
log-Sobolev inequality for the mean field flow,
and also that the dynamics considered are realized on the whole space instead of
the torus, standing in contrast with previous works,
e.g.\ \cite{guillin2021uniformvortex,lacker2023sharp,delarue2021uniform}.

\subsection{Notations}
\label{sec:notations}

Let \(d\) be a positive integer and \(x\), \(v\) be elements of \(\mathbb R^d\).
We denote the Euclidean norm of \(x\) and \(v\) by \(|x|\) and \(|v|\) respectively.
The letter \(z = (x,v)\) then denotes an element of \(\mathbb R^{2d}\)
with its Euclidean norm denoted by \(|z|^2 = |x|^2 + |v|^2\).
For a \(d \times d\) real matrix \(M\),
we denote by \(| M |_\textnormal{op}\) its operator norm
with respect to the Euclidean metric of \(\mathbb R^d\).
Let \(p \geq 1\).
Define \(\mathcal P_p(\mathbb R^d)\) to be the space of probabilities on \(\mathbb R^d\) of finite \(p\)-moment,
i.e.\ \(\mathcal P_p (\mathbb R^d) = \{ m \in \mathcal P (\mathbb R^d) : \int |x|^p m (dx) < +\infty\}\).
We denote the \(L^p\)-Wasserstein metric by \(W_p\) and refer readers to \cite[Chapter 7]{gf} for its definition and elementary properties.

Let \(F : \mathcal P_2(\mathbb R^d) \to \mathbb R\) be a mean field functional.
Denote by \(\frac{\delta F}{\delta m} : \mathcal P_2(\mathbb R^d) \times \mathbb R^d \to \mathbb R\) its linear functional derivative
and by \(D_m F = \nabla \frac{\delta F}{\delta m} : \mathcal P_2(\mathbb R^d) \times \mathbb R^d \to \mathbb R^d\) its intrinsic derivative, if they exist.
The definition of linear functional derivative on \(\mathcal P_2(\mathbb R^d)\) can be found in \cite[Definition 5.43]{probamfg1}.

Let \(X\) and \(Y\) be random variables.
We denote the distribution of \(X\) by \(\Law(X)\)
and say \(X \sim m\) if \(m = \Law(X)\).
We also say \(X \overset{d}= Y\) if \(\Law(X) = \Law(Y)\).

The set of couplings between probabilities \(\mu\) and \(\nu\)
is denoted by \(\Pi(\mu, \nu)\).

Let \(N \geq 2\) be integer.
The bold letters \(\mathbf x_N = (x^1,\ldots,x^N)\), \(\mathbf v_N = (v^1,\ldots,v^N)\) denote respectively \(N\)-tuples of elements in \(\mathbb R^{d}\)
and \(\mathbf z_N = (z^1,\ldots,z^N)\) an \(N\)-tuple of elements in \(\mathbb R^{2d}\).
We omit the subscript \(N\) when there are no ambiguities.
Given \(\mathbf x_N = (x^1,\ldots,x^N) \in \mathbb R^{dN}\), we denote the corresponding empirical measure by
\[
\mu_{\mathbf x_N} = \frac 1N \sum_{i=1}^N \delta_{x^i}.
\]
For \(i = 1\), \(\ldots\,\), \(N\), we define
\(-i = \{1,\ldots,N\} \setminus \{ i \}\), that is, the complementary index set,
and we denote the empirical measures formed by the \(N-1\) points
\((x_j)_{j \neq i}\) by
\[
\mu_{\mathbf x^{-i}_N} = \frac 1{N-1} \sum_{j \neq i} \delta_{x^j}.
\]
For an \(\mathbb R^{dN}\)-valued random variable \(\mathbf X_N = (X^i)_{i=1}^N\),
we can form the random empirical measures \(\mu_{\mathbf X_N}, \mu_{\mathbf X_N^{-i}}\).

Let \(I \subset \{1,\ldots,N\}\) and \(J = \{1,\ldots,N\} \setminus I\)
be the complementary index set.
Let \(\mathbf Z\) be an \(\mathbb R^{2dN}\)-valued random variable and
and \(m^N\) be its distribution, belonging to \(\mathcal P(\mathbb R^{2dN})\).
We denote the marginal and the (regular) conditional distributions of \(m^N\) by
\begin{align*}
m^{N,I} &= \Law (Z^i)_{i \in I}, \\
m^{N,I|J} (\mathbf z^J)
&= \Law \bigl( (Z^i)_{i \in I} \big| Z^j_t = z^j,~j \in J \bigr),
\end{align*}
where the latter is defined \(m^{N,J}\)-almost surely
and \(\mathbf z^J\) denotes the tuple \((z^j)_{j \in J}\).
We identify \(i\) with the singleton \(\{i\}\) when working with indices.

Whenever a measure \(m \in \mathcal P(\mathbb R^d)\) has a density with respect to the \(d\)-dimensional Lebesgue measure,
we denote its density function by \(m\) equally.
The relative \(H(\cdot | \cdot)\) between probabilities are always well defined
and the absolute entropy \(H(\cdot)\) is also well defined
if the measure in the argument has finite second moment.
If a measure \(m \in \mathcal P(\mathbb R^d)\)
has distributional derivatives \(Dm\) representable by a finite Borel measure
and \(Dm\) is absolutely continuous with respect to \(m\),
we define its Fisher information by
\[
I(m) = \int \biggl| \frac{Dm}{m} \biggr|^2 m,
\]
where \(\frac{Dm}{m}\) is the Radon--Nikodým derivative.
Otherwise we set \(I(m) = +\infty\).
One can verify that \(I(m)\) is finite only if \(m \in W^{1,1}(\mathbb R^d)\)%
\footnote{We sketch the proof here.
Suppose \(m\) has finite Fisher information.
Set \(m^n = m \star \rho^n\) for a mollifying sequence \((\rho^n)_{n \in \mathbb N}\).
Then we have \(\Vert m^n \Vert_{W^{1,1}} \leq C\) for all \(n \in \mathbb N\).
By Gagliardo--Nirenberg, \(m^n\) is uniformly bounded in \(L^p\) for some \(p > 1\),
so upon an extraction of subsequence, \((m^n)_{n \in \mathbb N}\) converges to some \(m' \in L^p\) weakly.
But \(m^n \to m\) in \(\mathcal P\).
The two limits coincide, i.e.\ \(m = m'\).
Hence \(m\) has density with respect to the Lebesgue
and so does \(Dm\).},
and in this case \(I(m) = \int \frac{|\nabla m|^2}{m}\),
\(\nabla m\) being the weak derivatives in \(L^1(\mathbb R^d; \mathbb R^d)\).
The Fisher information defined in this way corresponds
to the functional considered in \cite[(2.26)]{ambrosio2000functions}.
If \(m\) is a measure on \(\mathbb R^d\) having finite Fisher information,
and if \(\gamma\) is another measure on \(\mathbb R^d\)
having weakly differentiable density with respect to the Lebesgue,
we define the relative Fisher information by
\[
I(m | \gamma) = \int \biggl| \frac{\nabla m}{m}
- \frac{\nabla \gamma}{\gamma} \biggr|^2 m.
\]
For nonnegative functions \(f : \mathbb R^d \to [0,+\infty)\)
we define its entropy by
\[
\Ent_m f = \Expect_m [ f \log f ] - \Expect_m [ f ] \log \Expect_m [ f ],
\]
which is well defined in \([0,+\infty]\) by Jensen's inequality.

\paragraph{Organization of paper.}

In \cref{sec:assumptions-main-results}, we introduce our assumptions,
define the kinetic mean field Langevin and the particle system,
and state our main results.
We provide in \cref{sec:app} an exemplary dynamics modeling
neural networks' training and present our numerical experiments.
Moving on to the proofs,
we first show in \cref{sec:entropy-convergence} the exponential convergence
of the mean field and particle system dynamics.
We then study in \cref{sec:short-time-poc}
finite-time propagation of chaos and regularizations
of the kinetic MFL before combining all previous results
and showing the propagation of chaos theorem in its full form.
Finally, several technical results are proved in the appendices.

\section{Assumptions and main results}
\label{sec:assumptions-main-results}

\paragraph{Assumptions.}

Let \(F : \mathcal P_2(\mathbb R^d) \to \mathbb R\) be a mean field functional.
We suppose \(F\) is convex in the sense that
for every \(t \in [0,1]\) and every \(m\), \(m'\in \mathcal P_2(\mathbb R^d)\),
\begin{equation}
\label{eq:convex}
F\bigl((1-t)m + tm'\bigr) \leq (1-t)F(m) + tF(m').
\end{equation}
Suppose also its intrinsic derivative
\(D_m F : \mathcal P_2(\mathbb R^d) \times \mathbb R^d \to \mathbb R^d\)
exists and satisfies
\begin{multline}
\label{eq:lip}
\forall x,x' \in \mathbb R^d,~\forall m,m' \in \mathcal P_2(\mathbb R^d), \\
\lvert D_m F(m,x) - D_m F(m',x')\rvert
\leq M^F_{mm} W_1(m,m') + M^F_{mx} |x - x'|
\end{multline}
for some constants \(M^F_{mm}\), \(M^F_{mx} \geq 0\).
For each \(m \in \mathcal{P}_2(\mathbb R^d)\)
we define a probability measure \(\Pi^x(m)\) on \(\mathbb R^d\) by
\(\Pi^x (m)(dx) \propto \exp \bigl( - \frac{\delta F}{\delta m}(m,x)\bigr) dx\)
and suppose \(\Pi^x(m)\)
satisfies the \(\rho^x\)-\emph{logarithmic Sobolev inequality} (LSI),
uniformly in \(m\), for some \(\rho^x > 0\), that is,
for every \(m \in \mathcal P_2(\mathbb R^d)\),
\begin{equation}
\label{eq:x-lsi}
\forall f \in C^1_b(\mathbb R^d),\qquad
\rho^x \Ent_{\Pi^x(m)} (f^2) \leq \Expect_{\Pi^x(m)} \bigl[|\nabla f|^2\bigr].
\end{equation}
Finally for some of the results we suppose additionally
that \(F\) is third-order differentiable in measure
with \(\sup_{m \in \mathcal P_2(\mathbb R^d)}\sup_{x,x' \in \mathbb R^{d}}
\bigl|D_m^2 F(m,x,x')\bigr|_\textnormal{op} \leq M^F_{mm}\)
and
\begin{multline}
\label{eq:measure-third}
\forall m,m' \in \mathcal P_2(\mathbb R^{d}),~\forall x \in \mathbb R^{d}, \\
\biggl| \iint
\biggl[ \nabla_x \frac{\delta^3 F}{\delta m^3} (m',x, x', x') -
\nabla_x \frac{\delta^3 F}{\delta m^3} (m',x, x', x'') \biggr] m(dx') m(dx'')
\biggr| \\
\leq M^F_{mmm}
\end{multline}
for some constant \(M^F_{mmm}\).

\paragraph{Definition of \(\hat m\) and functional inequalities.}

For each \(m \in \mathcal P(\mathbb R^{2d})\),
we define \(\hat m\) to be the probability on \(\mathbb R^{2d}\) satisfying
\begin{equation}
\label{eq:hat-m}
\hat m(dxdv) \propto \exp \biggl( - \frac{\delta F}{\delta m}(m^x, x) - \frac 12 |v|^2 \biggr) dxdv,
\end{equation}
where \(m^x\) is the spatial marginal of \(m\).
Sometimes we will abuse the notation and
define for a measure \(m'^x \in \mathcal P_2(\mathbb R^d)\),
the probability
\(\widehat {m'^x}(dxdv) \propto \exp \bigl( - \frac{\delta F}{\delta m}(m'^x, x) - \frac 12 |v|^2 \bigr) dxdv \).
If \(F\) satisfies \cref{eq:x-lsi} with the LSI constant \(\rho^x\),
then setting
\begin{equation}
\label{eq:defn-rho}
\rho = \rho^x \wedge \frac 12,
\end{equation}
we have that the \(\rho\)-LSI holds for \(\hat m\):
for every \(f \in C^1_b (\mathbb R^{2d})\),
\begin{align}
\label{eq:lsi}
\rho \Ent_{\hat m} (f^2) &\leq \Expect_{\hat m} \bigl[|\nabla f|^2\bigr]. \\
\intertext{As a consequence, we have the \emph{Poincaré inequality}:
for every $f \in \mathcal C^1_b(\mathbb R^{2d})$,}
\label{eq:poincare}
2\rho \Var_{\hat m} (f) &\leq \Expect_{\hat m} \bigl[|\nabla f|^2\bigr]; \\
\intertext{and \emph{Talagrand's \(T_2\) transport inequality}:
for every $\mu \in \mathcal P_2(\mathbb R^{2d})$,}
\label{eq:t2}
2\rho W_2^2(\mu, \hat m) &\leq H(\mu | \hat m).
\end{align}

\paragraph{Mean field and particle system.}

We study the \emph{mean field kinetic Langevin dynamics}, that is,
the following McKean--Vlasov SDE
\begin{equation}
\begin{aligned}
dX_t &= V_t dt, \\
dV_t &= - V_t dt - D_m F\bigl(m_t^x, X_t\bigr) dt
+ \sqrt 2 dW_t, \quad\text{where $m^x_t = \Law(X_t)$.}
\end{aligned}
\label{eq:mf-sde}
\end{equation}
Let \(N \geq 2\).
The corresponding \(N\)-\emph{particle system} is defined by
\begin{equation}
\label{eq:ps-sde}
\begin{aligned}
dX^i_t &= V^i_t dt \\
dV^i_t &= - V^i_t dt - D_m F \bigl(\mu_{\mathbf X_t}, X^i_t\bigr) dt
+ \sqrt 2 dW^i_t,
\quad\text{where $\mu_{\mathbf X_t} = \frac 1N \sum_{j=1}^N \delta_{X^j_t}$.}
\end{aligned}
\end{equation}
Here \(W\) and \(W^i\) are standard Brownian motions in \(\mathbb R^d\),
and \((W^i)_{i=1}^N\) are independent.
Their marginal distributions \(m_t = \Law(X_t)\), \(m^N_t = \Law(\mathbf X_t) = \Law(X^1_t,\ldots,X^N_t)\) solve respectively the Fokker--Planck equations:
\begin{align}
\label{eq:mf-fp}
\partial_t m &= \Delta_{v} m + \nabla_{v} \cdot (m v) - v \cdot \nabla_{x} m
+ D_m F \bigl( m^x_t , x \bigr) \cdot \nabla_{v} m , \\
\label{eq:ps-fp}
\partial_t m^N &= \sum_{i=1}^N
\Bigl( \Delta_{v^i} m^N + \nabla_{v^i} \cdot (m^N v^i) - v^i \cdot \nabla_{x^i} m^N  + D_m F ( \mu_{\mathbf x} , x^i ) \cdot \nabla_{v^i} m^N \Bigr),
\end{align}
where on the second line \(\mu_{\mathbf x} \coloneqq \frac 1N \sum_{i=1}^N \delta_{x^i}\).
The mean field equation \cref{eq:mf-fp} is non-linear while the \(N\)-particle system equation \cref{eq:ps-fp} is linear.
We will show in \cref{lem:wellposedness-regularity} the wellposedness
of the mean field dynamics \cref{eq:mf-fp}
with initial conditions of finite second moment.

\begin{rem}
\label{rem:vol-scaling}
We have fixed the volatility and the friction constants to simplify the computations.
In order to apply our results to the diffusion process defined by
\begin{equation}
\label{eq:mf-sde-noscaling}
\begin{aligned}
dX_t &= \alpha V_t dt, \\
dV_t &= - \gamma V_t dt - D_m F\bigl(\Law(X_t), X_t\bigr) dt + \sigma dW_t,
\end{aligned}
\end{equation}
with \(\alpha\), \(\gamma\), \(\sigma > 0\),
we introduce the new variables:
\[
x' = \frac{(2\gamma^3)^{1/2}}{\alpha \sigma} x,\quad
v' = \frac{(2\gamma)^{1/2}}{\sigma} v,\quad
t' = \gamma^{-1} t,
\]
define \(m'\) to be the push-out of measure \(m\) under \(x \mapsto x'\),
and set
\[
F'(m') = \biggl(\frac{2}{\gamma\sigma^2}\biggr)^{\!1/2} F(m).
\]
Then the stochastic process \(t' \mapsto (X'_{t'}, V'_{t'})\) satisfy
\[
\begin{aligned}
dX'_{t'} &= V'_{t'} dt', \\
dV'_{t'} &= - V'_{t'} dt' - D_m F' \bigl(\Law(X'_{t'}), X'_{t'} \bigr) dt + \sqrt 2 dW'_{t'},
\end{aligned}
\]
where \(W'_{t'} \coloneqq \gamma^{1/2} W_{t}\) is a standard Brownian motion.
In the same way we can treat the particle system defined by
\begin{equation}
\label{eq:ps-sde-noscaling}
\begin{aligned}
dX^i_t &= \alpha V^i_t dt \\
dV^i_t &= - \gamma V^i_t dt - D_m F \bigl(\mu_{\mathbf X_t}, X^i_t\bigr) dt
+ \sigma dW^i_t,
\quad\text{where $\mu_{\mathbf X_t} = \frac 1N \sum_{j=1}^N \delta_{X^j_t}$.}
\end{aligned}
\end{equation}
\end{rem}

\paragraph{Free energies and invariant measures.}

For measures \(m \in \mathcal P_2(\mathbb R^{2d})\),
\(m^N \in \mathcal P_2(\mathbb R^{2dN})\),
we introduce the mean field and \(N\)-particle free energies:
\begin{align}
\label{eq:mf-free-energy}
\mathcal F(m) &= F(m^x) + \frac 12 \int |v|^2 m(dx dv) + H(m), \\
\label{eq:ps-free-energy}
\mathcal F^N(m^N) &= \int \biggl( NF (\mu_{\mathbf x}) + \frac 12 |\mathbf v|^2 \biggr) m^N(d\mathbf xd\mathbf v) + H(m^N).
\end{align}
The functionals are well defined with values in \(( -\infty, +\infty ]\).
We will also work with probability measures,
\(m_\infty \in \mathcal P_2(\mathbb R^{2d})\)
and \(m^N_\infty \in \mathcal P_2(\mathbb R^{2dN})\),
satisfying
\begin{align}
\label{eq:mf-invariant-measure}
m_\infty(dxdv) &\propto \exp \biggl( -\frac{\delta F}{\delta m}(m^x_\infty, x)
- \frac 12 |v|^2 \biggr) dxdv, \\
\label{eq:ps-invariant-measure}
m^N_\infty(d\mathbf x d\mathbf v) &\propto \exp \biggl( - N F (\mu_{\mathbf x})
- \frac 12 |\mathbf v|^2 \biggr) d\mathbf xd\mathbf v,
\end{align}
and having finite exponential moments, i.e., the integrals
\(\int \exp \bigl(\alpha (|x| + |v|) \bigr) m_\infty(dx dv)\)
and
\(\int \exp \bigl(\alpha (|\mathbf x| + |\mathbf v|) \bigr)
m^N_\infty(d\mathbf x d\mathbf v)\)
are finite for every \(\alpha \geq 0\).
We call \(m_\infty\), \(m^N_\infty\) \emph{invariant} measures to
the dynamics \cref{eq:mf-fp,eq:ps-fp} respectively.
The existence and uniqueness of the invariant measures
are guaranteed by our assumptions \cref{eq:convex,eq:lip,eq:x-lsi},
as will be stated in \cref{lem:exist-unique-invariant-measures}.

\paragraph{Main results.}

Recall that \(m_t\) and \(m^N_t\) are the respective marginal distributions of the mean field and the \(N\)-particle system \cref{eq:mf-sde,eq:ps-sde}.
We first prove the exponential entropic convergence result for the MFL dynamics \cref{eq:mf-sde}.

\begin{thm}[Entropic convergence of MFL]
\label{thm:mf-entropy-convergence}
Assume \(F\) satisfies \cref{eq:convex,eq:lip,eq:x-lsi}.
If \(m_{0}\) has finite second moment, finite entropy and finite Fisher information,
then there exist constants
\[
C_0 = C_0\bigl(M^F_{mx},M^F_{mm}\bigr),\qquad
\kappa = \kappa\bigl(\rho^x,M^F_{mx},M^F_{mm}\bigr)
\]
such that for every \(t \geq 0\),
\begin{equation}
\label{eq:mf-entropy-convergence}
\mathcal F(m_t) - \mathcal F(m_\infty) \leq \bigl(\mathcal F(m_0) - \mathcal F(m_\infty) + C_0 I(m_0 | \hat m_0)\bigr) e^{-\kappa t}.
\end{equation}
\end{thm}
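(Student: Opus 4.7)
The plan is to adapt Hérau's hypocoercivity strategy to the non-linear mean field setting, combined with an approximation argument to handle the low-regularity initial data. The central object is the modified free-energy functional
\[
\Phi_t := \mathcal F(m_t) - \mathcal F(m_\infty) + a \, I_{vv}(m_t|\hat m_t) + 2b \, I_{xv}(m_t|\hat m_t) + c \, I_{xx}(m_t|\hat m_t),
\]
where $I_{vv}, I_{xv}, I_{xx}$ denote the $vv$-, $xv$- and $xx$-components of the distorted Fisher information of $m_t$ relative to the local Gibbs measure $\hat m_t$ defined in \cref{eq:hat-m}, and $a, b, c > 0$ are small tuning constants with $b^2 \leq ac$. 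The convexity assumption \cref{eq:convex} delivers the crucial inequality $\mathcal F(m_t) - \mathcal F(m_\infty) \geq H(m_t|\hat m_t) \geq 0$, so $\Phi_t$ dominates the relative entropy, and $\Phi_0 \leq \mathcal F(m_0) - \mathcal F(m_\infty) + C_0 I(m_0 | \hat m_0)$ for an explicit $C_0$ via Cauchy--Schwarz.

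First, I would establish the energy dissipation identity: using that $\hat m_t$ is Gaussian in $v$ so that $\nabla_v \log \hat m_t = -v$, and integrating by parts against the flow \cref{eq:mf-fp}, one obtains $\frac{d}{dt}\mathcal F(m_t) = -I_{vv}(m_t|\hat m_t)$. This dissipation only sees the $v$-direction, so coercivity alone fails; hypocoercivity through $\Phi_t$ is needed. The core technical step is then to differentiate $I_{vv}, I_{xv}, I_{xx}$ along the nonlinear flow. Each derivative reproduces Villani's linear hypocoercive quadratic form (negative definite in $(I_{vv}, I_{xv}, I_{xx})$ for appropriate $a, b, c$) plus two families of mean field error terms: those involving $\nabla_x D_m F$ (controlled by $M^F_{mx}$), and those arising from the time evolution of $\hat m_t$ through $m_t^x$ (controlled by $M^F_{mm}$ via the Lipschitz bound \cref{eq:lip}). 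Both families can be absorbed pointwise into the diagonal pieces by Cauchy--Schwarz together with the smallness of $a, b, c$, yielding a bound of the form $\frac{d}{dt}\Phi_t \leq -\kappa_1 (I_{vv} + I_{xv} + I_{xx}) - I_{vv}$ for some $\kappa_1 > 0$.

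Invoking the uniform log-Sobolev inequality \cref{eq:lsi} to bound $H(m_t|\hat m_t)$ by a multiple of $I_{vv} + I_{xx}$, and combining with the convexity lower bound $\mathcal F(m_t) - \mathcal F(m_\infty) \geq H(m_t|\hat m_t)$, the dissipation inequality closes to $\frac{d}{dt}\Phi_t \leq -\kappa \Phi_t$, and Grönwall's lemma delivers \cref{eq:mf-entropy-convergence}. Since the Fisher-information differentiation assumes more regularity of $m_t$ and of $F$ than the hypotheses provide, I would finally close the proof with an approximation argument: mollify $F$ so as to preserve convexity and the Lipschitz constants $M^F_{mx}, M^F_{mm}$, approximate $m_0$ by smooth densities of finite second moment, entropy and Fisher information, prove the estimate for the regularized dynamics, and pass to the limit using lower semicontinuity of relative entropy and Fisher information together with the $W_1$-continuity of $m \mapsto \hat m$.

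The main obstacle is the hypocoercive computation in the mean field setting: one must control all the new terms coming from the dependence of $\hat m_t$ on $m_t^x$ without appealing to derivatives of $F$ of order higher than those available from \cref{eq:lip}. The key insight, highlighted by the authors in the introduction, is that these mean field contributions are always dominated by the diagonal Fisher-information pieces already present in the linear kinetic theory, so that convexity of $F$ together with the Lipschitz bound suffices to close the estimate without any interaction-strength or dimensional restriction.
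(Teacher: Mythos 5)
Your overall strategy coincides with the paper's: a Villani--H\'erau hypocoercive Lyapunov functional $\mathcal F(m_t) + a\,I_{vv} + 2b\,I_{xv} + c\,I_{xx}$ built with the distorted Fisher information of $m_t$ against the time-dependent Gibbs measure $\hat m_t$, absorption of the mean-field contributions (both the $\nabla_x D_m F$ terms and those coming from $\partial_t \hat m_t$) into the diagonal Fisher pieces via the Lipschitz constants, and a final mollification of $F$ and of $m_0$ with passage to the limit by lower semicontinuity. The paper organizes the time derivative as $\Delta_1+\Delta_2+\Delta_3$ (free-energy dissipation, Fisher dissipation at fixed reference, and the drift of the reference $\hat m_t$) and tracks the full four-component vector $Y_t$ including second-order terms $\|A^2 u\|,\|CAu\|$, but that is a bookkeeping difference, not a change of route.

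There is, however, one concrete error that would prevent the argument from closing as written. You invoke, twice, a ``convexity lower bound'' $\mathcal F(m_t) - \mathcal F(m_\infty) \geq H(m_t\,|\,\hat m_t)$. This inequality is false in general and in fact points the wrong way for what you need. Lemma~\ref{lem:mf-entropy-sandwich} gives $H(m\,|\,m_\infty) \leq \mathcal F(m) - \mathcal F(m_\infty) \leq H(m\,|\,\hat m)$: the lower bound on the free-energy gap is against the \emph{fixed} equilibrium $m_\infty$, while the bound against the \emph{local} Gibbs measure $\hat m$ is an upper bound. To close the Gr\"onwall step you must dominate $\Phi_t$ by the dissipated Fisher information, which requires the chain $\mathcal F(m_t) - \mathcal F(m_\infty) \leq H(m_t\,|\,\hat m_t) \leq \tfrac{1}{2\rho}\, I(m_t\,|\,\hat m_t)$, i.e.\ precisely the upper bound you do not state. (The lower bound $\geq H(m_t\,|\,m_\infty) \geq 0$ is only needed for the trivial fact $\mathcal F(m_t) - \mathcal F(m_\infty) \leq \Phi_t$ when reading off the conclusion.) With this direction corrected, your proposal matches the paper's proof.
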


The proof of the theorem is postponed to \cref{sec:mf}.
We note that the proof only relies on the $W_2$-Lipschitz continuity
of $m \mapsto D_m F(m,x)$, contrary to the $W_1$ one stated in \cref{eq:lip}.

Our second major contribution is the uniform-in-\(N\) exponential entropic convergence of the particle systems.

\begin{thm}[Entropic convergence of particle systems]
\label{thm:ps-entropy-convergence}
Assume \(F\) satisfies \cref{eq:convex,eq:lip,eq:x-lsi}.
If \(m^N_{0}\) has finite second moment, finite entropy and finite Fisher information
for some \(N \geq 2\),
then there exist constants
\[
C_0 = C_0\bigl(M^F_{mx},M^F_{mm}\bigr),\quad
C_1 = C_1\bigl(\rho^x,M^F_{mx},M^F_{mm}\bigr),\quad
\kappa = \kappa\bigl(\rho^x,M^F_{mx},M^F_{mm}\bigr)
\]
such that if \(N > C_1/\kappa\),
then for every \(t \geq 0\),
\begin{multline}
\mathcal F^N(m^N_t) - N\mathcal F(m_\infty)
\leq
\Bigl(\mathcal F\bigl(m^N_0\bigr)
- N\mathcal F(m_\infty) + C_0I\bigl(m^N_0\big|m^N_\infty\bigr) \Bigr)
e^{ - ( \kappa - C_1/N ) t } \\
+ \frac{C_1 d}{\kappa - C_1/N}.
\label{eq:ps-entropy-convergence}
\end{multline}
\end{thm}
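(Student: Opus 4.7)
The plan is to mirror the proof of \cref{thm:mf-entropy-convergence} at the level of the $N$-particle system, using a Hérau-type modified entropy functional adapted to the kinetic particle dynamics and exploiting the componentwise decomposition of the Fisher information together with a componentwise log-Sobolev inequality (as advertised in the introduction).

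First I would introduce, analogously to the mean field Hérau functional used to prove \cref{thm:mf-entropy-convergence}, a Lyapunov functional of the form
\[
\mathcal L^N(m^N) = \mathcal F^N(m^N) - N \mathcal F(m_\infty)
+ a \mathcal I^N_{xx}(m^N) + 2b \mathcal I^N_{xv}(m^N) + c \mathcal I^N_{vv}(m^N),
\]
where the $\mathcal I^N_{\bullet\bullet}$ are relative-Fisher-information-like quadratic forms built from the gradients of $\log m^N$ minus the gradients of $\log m^N_\infty$, summed componentwise over the $N$ particles, and the constants $a,b,c$ are chosen (as in the linear kinetic case \cite{hypocoer}) so that the quadratic form in $(\nabla_{x^i}, \nabla_{v^i})$ is positive-definite and the dissipation is coercive. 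The target in $\mathcal L^N$ is $N\mathcal F(m_\infty)$ rather than $\mathcal F^N(m^N_\infty)$, which is crucial for a bound with $N$-independent constants: by the convexity \cref{eq:convex} and the defining relation \cref{eq:mf-invariant-measure} of $m_\infty$, one has the ``free energy almost-dissipation'' identity
\[
\frac{d}{dt}\bigl[\mathcal F^N(m^N_t) - N\mathcal F(m_\infty)\bigr]
= - \sum_{i=1}^N \int \bigl|v^i + \nabla_{v^i} \log m^N_t / \widehat{m^x_\infty}^{\otimes N}\bigr|^2 m^N_t + \text{finite-}N\text{ corrections},
\]
where the correction compares $D_m F(\mu_{\mathbf x}, x^i)$ with $D_m F(m^x_\infty, x^i)$ and is controlled, by the second- and third-order regularity assumption \cref{eq:measure-third}, by the componentwise $xx$-Fisher information plus an additive term of order $d$.

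The hypocoercive computation then proceeds just as in the mean field proof, but applied particle-by-particle: differentiating each $\mathcal I^N_{\bullet\bullet}$ along \cref{eq:ps-fp} produces, besides the expected definite-negative Fisher-information terms and the mean field cross terms, extra contributions coming from the derivatives of the empirical-measure drift $x^i \mapsto D_m F(\mu_{\mathbf x}, x^i)$ with respect to the other particles' positions. The uniform bound $\sup |D_m^2 F|_{\mathrm{op}} \leq M^F_{mm}$ ensures these extra contributions are bounded by a constant times $\frac{1}{N} \sum_i$(componentwise Fisher information), hence can be absorbed into the leading dissipation for $N$ large; this is exactly the place where the $-C_1/N$ correction to the rate appears. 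Meanwhile the finite-$N$ corrections in the free-energy identity, together with analogous $O(1/N)$ corrections arising from $D_m^2 F$ and $D_m^3 F$ in the Fisher information dissipations, yield an additive source term of size $C_1 d$ (linear in dimension because each of the $N$ particles contributes $O(d/N)$, as the corrections involve traces against Gaussian-tails in the velocity variable).

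Once one has a differential inequality of the form
\[
\frac{d}{dt} \mathcal L^N(m^N_t) \leq - (\kappa - C_1/N)\,\mathcal L^N(m^N_t) + C_1 d,
\]
valid for $N > C_1/\kappa$, Grönwall's lemma gives \cref{eq:ps-entropy-convergence} upon noting that $\mathcal L^N(m^N_0) \leq \mathcal F^N(m^N_0) - N\mathcal F(m_\infty) + C_0 I(m^N_0 | m^N_\infty)$ by definition of the Fisher correction terms, and that the lower bound $\mathcal L^N(m^N_t) \geq \mathcal F^N(m^N_t) - N\mathcal F(m_\infty)$ holds because $\mathcal I^N_{\bullet\bullet}$ is nonnegative up to a constant times the total relative Fisher information, which is itself controlled by $\mathcal F^N - N\mathcal F(m_\infty)$ via the componentwise LSI derived from \cref{eq:lsi}. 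I expect the main obstacle to be the bookkeeping of the $O(1/N)$ and $O(d)$ corrections in the dissipation computation, in particular showing that all the ``off-diagonal'' mean field cross terms — including those produced when derivatives fall on the empirical-measure argument of $D_m F$ — can indeed be absorbed into the diagonal Fisher terms uniformly in $N$ and $d$, which is the reason the regularity assumption \cref{eq:measure-third} on the third-order measure derivative is needed.
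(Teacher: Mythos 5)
Your high-level scheme — a Hérau-type hypocoercive functional for the $N$-particle system, plus a conversion from relative Fisher information to free-energy decay via a componentwise log-Sobolev inequality — is indeed the spirit of the paper's proof. However, there are two genuine gaps in the way you propose to execute it.

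First, you invoke the third-order measure regularity assumption \cref{eq:measure-third}, but \cref{thm:ps-entropy-convergence} does \emph{not} assume it; only \cref{eq:convex,eq:lip,eq:x-lsi} are hypotheses. Your correction term compares $D_m F(\mu_{\mathbf x}, x^i)$ with $D_m F(m^x_\infty, x^i)$, and you need \cref{eq:measure-third} precisely because bounding $W_2(\mu_{\mathbf x}, m^x_\infty)$ (or $\phi(\mu_{\mathbf x}) - \phi(m^x_\infty)$ more generally) uniformly in $N$ and $d$ is not possible from \cref{eq:lip} alone — one would get Fournier--Guillin rates that are dimension-dependent. The paper sidesteps this entirely by a \emph{leave-one-out} change of empirical measure: in the componentwise Fisher decomposition it replaces $\mu_{\mathbf x}$ by $\mu_{\mathbf x^{-i}}$ (and not by $m^x_\infty$). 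The difference $D_m F(\mu_{\mathbf x},\cdot) - D_m F(\mu_{\mathbf x^{-i}},\cdot)$ is genuinely $O(1/N)$ by the $W_1$-Lipschitz condition \cref{eq:lip} and the explicit transport plan \cref{eq:transport-plan-empiricals}, with no higher-order regularity needed. The resulting error $\Delta_1$, $\Delta_2$ are then controlled by $\frac1N W_2^2(m^N_t, m_\infty^{\otimes N}) + \Var m^x_\infty$, where $\Var m^x_\infty \leq d/(2\rho)$ by the Poincaré inequality \cref{eq:poincare} (not by any Gaussian-tail trace in the velocity variable, as you suggest), and $W_2^2(m^N_t, m_\infty^{\otimes N})$ is absorbed by splitting $I_t = \frac12 I_t + \frac12 I_t$ and applying $T_2$.

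Second, you state that the passage from Fisher information dissipation to free-energy decay follows from "the componentwise LSI derived from \cref{eq:lsi}", and that $\mathcal L^N \geq \mathcal F^N - N\mathcal F(m_\infty)$ holds because the Fisher corrections are nonnegative. The nonnegativity claim is fine (choose $ac > b^2$), but the actual conversion is the heart of the proof and you do not spell it out: one must decompose $I(m^N_t | m^N_\infty) = \sum_i \Expect \bigl[|\nabla_{z^i}\log m^{N,i|-i}_t(\cdot | \mathbf Z^{-i}_t) - \nabla_{z^i}\log m^N_\infty|^2\bigr]$ into conditional one-particle Fisher informations, apply the uniform LSI \cref{eq:lsi} to each conditional law relative to $\hat\mu_{\mathbf X^{-i}_t}$, invoke the convexity of $F$ via the tangent inequality, and then use the superadditivity side of \cref{lem:info-ineqs} to reassemble $H(m^N_t)$. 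Skipping this step leaves a gap in how the entropy appears from the Fisher dissipation, which is not automatic. The paper's architecture is therefore genuinely two-step — hypocoercivity with target $m^N_\infty$ (\cref{lem:ps-unif-hypocoer}) giving $\frac{d}{dt}\mathcal E^N \leq -\alpha I(m^N_t|m^N_\infty)$, and then a separate lower bound on $I(m^N_t|m^N_\infty)$ in terms of $\mathcal F^N(m^N_t) - N\mathcal F(m_\infty)$ minus controlled errors — rather than the single-functional differential inequality you write down.
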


The proof of the theorem is postponed to \cref{sec:ps}.

\begin{rem}
Strictly speaking, the result \cref{eq:ps-entropy-convergence} does not imply that the particle systems converge uniformly.
We only show \(\frac1N\mathcal F^N\bigl(m^N_t\bigr)\)
approaches the mean field minimum \(\mathcal F(m_\infty)\) uniformly quickly
until they are \(O(N^{-1})\)-close to each other.
\end{rem}

\begin{rem}
\Cref{thm:mf-entropy-convergence,thm:ps-entropy-convergence}
state results concerning the convergence of the respective free energies,
which we will also call ``convergence of entropy'' or ``entropic convergence'',
since in both cases the differences of free energies are related to relative entropies,
as shown in \cref{lem:mf-entropy-sandwich,lem:ps-entropy-sandwich}.
\end{rem}

We now present the main theorem,
which establishes the uniform-in-time propagation of chaos in both the Wasserstein distance and the relative entropy.
The results are direct consequences of the exponential convergence in
\cref{thm:mf-entropy-convergence,thm:ps-entropy-convergence}
and the regularization phenomena to be studied in \cref{sec:short-time-poc}.

\begin{thm}[Wasserstein and entropic propagation of chaos]
\label{thm:poc}
Assume \(F\) satisfies \cref{eq:convex,eq:lip,eq:x-lsi,eq:measure-third}.
If \(m_0\) belongs to \(\mathcal P_2(\mathbb R^d)\)
and \(m^N_0\) belongs to \(\mathcal P_2(\mathbb R^{dN})\)
for some \(N \geq 2\),
then there exist constants
\(C_1 = C_1\bigl(M^F_{mx}, M^F_{mm}, M^F_{mmm}\bigr)\),
\(C_2 = C_2\bigl(\rho^x,M^F_{mx},M^F_{mm}\bigr)\)
and \(\kappa = \kappa\bigl(\rho^x,M^F_{mx},M^F_{mm}\bigr)\)
such that if \(N > C_2/\kappa\),
then for every \(t > 0\),
\begin{align}
\MoveEqLeft W_2^2\bigl(m^N_t, m^{\otimes N}_t\bigr) \nonumber \\
&\leq \min \biggl\{
C_1W_2^2 \bigl(m^N_0, m_0^{\otimes N}\bigr) e^{C_1 t}
+ C_1 (e^{C_1 t} - 1) (\Var m_0 + d), \nonumber \\
&\phantom{\leq \min \biggl\{}
\frac{C_2N}{(t \wedge 1)^6} W_2^2(m_0, m_\infty) e^{-\kappa t}
+ \frac{C_2}{(t \wedge 1)^6} W_2^2\bigl(m^N_0, m^{\otimes N}_\infty\bigr)
e^{ - ( \kappa - C_2/N ) t } \nonumber \\
&\phantom{\leq \min \biggl\{}
\hspace{17.5em}+ \frac{C_2d}{\kappa - C_2/N} \biggr\};
\label{eq:w-poc}
\end{align}
moreover, for every \(t\) and \(s\) such that \(s + 1 \geq t > s \geq 0\),
\begin{multline}
\label{eq:h-poc}
H\bigl(m^N_t \big| m^{\otimes N}_t\bigr)
\leq
\frac{C_1}{(t - s)^3} W_2^2(m^N_s, m^{\otimes N}_s) + C_1 (e^{C_1 (t - s)} - 1)
(\Var m_s + d).
\end{multline}
\end{thm}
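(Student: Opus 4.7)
The plan is to interleave the exponential convergence theorems \cref{thm:mf-entropy-convergence,thm:ps-entropy-convergence} with the two short-time regularization results promised in \cref{sec:short-time-poc}, supplemented by a synchronous-coupling argument for the short-time regime of the Wasserstein bound. For the first minimand of \cref{eq:w-poc}, I would construct a synchronous coupling: let $(\bar Y^i_t, \bar V^i_t)_{i=1}^N$ be $N$ independent copies of the McKean--Vlasov SDE \cref{eq:mf-sde}, driven by the \emph{same} Brownian motions as the particle system \cref{eq:ps-sde} and started from an optimal $W_2$-coupling of $m^N_0$ and $m_0^{\otimes N}$. Applying Itô to $\sum_i (|X^i_t - \bar Y^i_t|^2 + |V^i_t - \bar V^i_t|^2)$ and exploiting the Lipschitz bound \cref{eq:lip} yields a differential inequality with forcing term of order $N \mathbb E[W_1^2(\mu_{\bar{\mathbf Y}_t}, m_t)]$; this empirical fluctuation is controlled by $(\Var m_t + d)/N$ via a Monte-Carlo estimate on the particular drift functional, and $\Var m_t$ is propagated from $\Var m_0$ with a factor absorbed into $C_1$. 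Grönwall then closes the first minimand.

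For the second minimand, the strategy is to upgrade the merely $W_2$-integrable initial data into data with finite entropy and Fisher information at some time $s \sim (t \wedge 1)/2$, then apply the exponential convergence \cref{thm:mf-entropy-convergence,thm:ps-entropy-convergence} on $[s, t]$, and finally return to $W_2$ through Talagrand's inequality \cref{eq:t2}. The upgrade combines two results from \cref{sec:short-time-poc}: a Ren--Wang-style log-Harnack (Wasserstein to entropy) and an adaptation of Hérau's hypocoercive functional (entropy to Fisher information). Each contributes a factor of order $(t \wedge 1)^{-3}$ in the kinetic regime, explaining the joint factor $(t \wedge 1)^{-6}$. The triangle inequality together with the tensorization $W_2^2(m_t^{\otimes N}, m_\infty^{\otimes N}) = N W_2^2(m_t, m_\infty)$ then splits the bound into the two contributions visible on the right-hand side of \cref{eq:w-poc}, the mean-field part carrying the explicit factor $N$.

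The entropic bound \cref{eq:h-poc} is obtained by applying the log-Harnack inequality to the $N$-particle SDE \cref{eq:ps-sde} on the short interval $[s, t]$. Compare $m^N_t$ with the law $\tilde\nu^N_t$ of the particle system started from an optimal $W_2$-coupling of $m_s^{\otimes N}$: log-Harnack supplies the term $\frac{C_1}{(t-s)^3} W_2^2(m^N_s, m_s^{\otimes N})$. The residual $H(\tilde\nu^N_t \,|\, m_t^{\otimes N})$, between the $N$-particle and tensorized mean-field evolutions sharing the initial condition $m_s^{\otimes N}$, is bounded via Girsanov's theorem: the squared drift discrepancy $|D_m F(\mu_{\tilde{\mathbf Y}_r}, \tilde Y^i_r) - D_m F(m_r, \tilde Y^i_r)|^2$ is estimated by the Monte-Carlo variance of the empirical measure around $m_r$, where the higher-order measure regularity \cref{eq:measure-third} is used to extract a sharper $O((\Var m_r + d)/N)$ estimate by a Taylor expansion cancelling the linear part. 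Summing over $i$ produces the $(\Var m_s + d)$ factor, and a Grönwall closure in $r$ yields the exponential $e^{C_1(t-s)} - 1$.

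The main obstacle, in my view, lies in the short-time regularizations of \cref{sec:short-time-poc}: they must be established with constants uniform in $N$ and at most linearly growing in $d$, so that the factor $(t \wedge 1)^{-6}$ does not pick up an $N$- or exponential $d$-dependence that would spoil the uniform-in-$N$ character of \cref{eq:w-poc}. The kinetic log-Harnack requires a careful implementation of Ren--Wang's coupling-by-change-of-measure for a hypoelliptic mean-field SDE on the whole space, while the entropy-to-Fisher regularization, built on Hérau's hypocoercive functional, must absorb the mean-field interaction terms into the diagonal Fisher-information terms already present in the linear kinetic Fokker--Planck setting---a point flagged in \cref{sec:contributions} as the novel ingredient.
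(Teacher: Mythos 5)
Your overall plan matches the paper's three-step structure exactly: the first minimand of \cref{eq:w-poc} via synchronous coupling and a Monte--Carlo estimate for the drift functional (that is \cref{prop:finite-w-poc}, built on \cref{lem:synchronous-coupling} and \cref{lem:convergence-empirical}); the second minimand via the $W_2\!\to\!H$ regularization (\cref{prop:w-to-h}), the $H\!\to\!I$ regularization (\cref{prop:h-to-i}), the exponential convergence of \cref{thm:mf-entropy-convergence,thm:ps-entropy-convergence} on $[t\wedge 1,\, t]$, and a return to $W_2$ via Talagrand \cref{eq:t2} together with \cref{lem:mf-entropy-sandwich,lem:ps-entropy-sandwich}. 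Your caution that the regularizations must carry $N$-uniform and at-most-linear-in-$d$ constants is also the right one, and you correctly identify \cref{eq:measure-third} as what replaces a Fournier--Guillin $W_1$ rate by an $O((\Var m + d)/N)$ bound for the particular functional $D_mF$.

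There is, however, a genuine gap in your treatment of \cref{eq:h-poc}. You propose to pass through the intermediate law $\tilde\nu^N_t = (P^N_{t-s})^* m_s^{\otimes N}$: first bound $H\bigl(m^N_t \,\big|\, \tilde\nu^N_t\bigr)$ by $(t-s)^{-3}W_2^2\bigl(m^N_s, m_s^{\otimes N}\bigr)$ via a log-Harnack for $P^N$, then bound $H\bigl(\tilde\nu^N_t \,\big|\, m_t^{\otimes N}\bigr)$ by Girsanov. These two estimates are individually fine, but relative entropy has no triangle inequality, so there is no elementary way to chain them into a bound on $H\bigl(m^N_t \,\big|\, m_t^{\otimes N}\bigr)$. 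The paper avoids this entirely in \cref{lem:log-harnack-poc}: it constructs a single coupling-by-change-of-measure bridge that steers the particle trajectory started from a $W_2$-optimal coupling of $m^N$ and $m^{\otimes N}$ so as to hit the tensorized mean-field trajectory at time $T$, with the added drift absorbing both the initial displacement (contributing the $(T\wedge 1)^{-3}W_2^2$ piece) and the mean-field/empirical drift discrepancy (contributing the $(e^{CT}-1)(\Var m + d)$ piece). The combined Girsanov cost $\Expect[R_T\log R_T]$ then directly upper-bounds $\bigl(P^N_T\log f^N\bigr)(m^N) - \log\bigl(P_T^{\otimes N}f^N\bigr)(m)$, and hence $H\bigl(m^N_T\,\big|\, (P_T^*m)^{\otimes N}\bigr)$, with no intermediate law and no triangle inequality needed. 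You should replace your two-step decomposition with this single bridge argument, or equivalently invoke \cref{lem:log-harnack-poc} directly.

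A minor remark: in your first paragraph the phrase ``forcing term of order $N\,\Expect[W_1^2(\mu_{\bar{\mathbf Y}_t}, m_t)]$'' would, if taken literally, produce the dimension-dependent Fournier--Guillin rate. You then correct this by invoking the Monte--Carlo estimate for the specific functional $D_mF(\cdot,x)$; this is exactly \cref{lem:convergence-empirical} applied under \cref{eq:measure-third}, and it is essential that the estimate be on the functional, not on the $W_1$ distance of the empirical measure itself.
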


The proof of the theorem is postponed to \cref{sec:poc}.

\paragraph{Comments on the assumptions.}

Compared to our previous work \cite{ulpoc},
we have removed the technical assumption
that \(x \mapsto D_m F(m,x)\) has bounded higher-order derivatives
by a mollifying procedure of the mean field functional.
However, the spatial Lipschitz constant \(M^F_{mx}\),
appearing in the assumption \cref{eq:lip},
will contribute to the constants, especially the rate of convergence \(\kappa\),
in our theorems.
Nevertheless, this behavior is expected for kinetic dynamics,
as this dependency is already present for the linear Fokker--Planck dynamics in \cite{hypocoer}.
Finally, we introduce the new condition \cref{eq:measure-third}
on the second and third-order derivatives in measure of the mean field functional.
The condition \cref{eq:measure-third} is used to obtain \(O(1)\) errors
in the propagation of chaos bounds \cref{eq:w-poc,eq:h-poc} in \cref{thm:poc},
which are stronger than the dimension-dependent errors
obtained from the method of Fournier and Guillin \cite{fournier2015rate}.

\section{Application: training neural networks by momentum gradient descent}
\label{sec:app}

We have given in Section 3 of our previous work \cite{ulpoc}
several examples of mean field functionals
satisfying conditions \cref{eq:convex,eq:lip,eq:x-lsi} of our theorems,
and the only additional condition that remains to verify
is the bound on the higher-order measure derivative \cref{eq:measure-third}.
In the following we will recall the mean field formulation of
the loss of two-layer neural networks
and its corresponding kinetic dynamics (see \cite[Examples 2 and 4]{ulpoc}),
and verify that it satisfies indeed the additional assumption.

\subsection{Mean field formulation of neural network}
\label{ex:train_nns}

Recall that the structure of a two-layer neural network is determined
by its \emph{feature map}:
\[
\mathbb R^d \ni z \mapsto \Phi(\theta; z)
\coloneqq \ell(c) \varphi (a \cdot z + b) \in \mathbb R^{d'},
\]
where \(\theta \coloneqq (c,a,b) \in \mathbb R^{d'} \times \mathbb{R}^{d} \times \mathbb R \eqqcolon S\)
is the parameter of a single neuron,
\(\varphi : \mathbb R \to \mathbb R\) is a non-linear \emph{activation function}
satisfying the squashing condition (see \cite[(3.4)]{ulpoc}),
and \(\ell : \mathbb{R} \to [-L,L]\) is a \emph{truncation function}
with \emph{threshold} \(L \in (0,+\infty)\).
Here the action of the truncation is tensorized:
\(\ell (c) = \ell (c^1, \ldots, c^{d'})
\coloneqq \bigl(\ell(c^1), \ldots, \ell(c^{d'})\bigr)\)
for a \(d'\)-dimensional vector \(c = (c^1, \ldots, c^{d'})\).
Then given \(N\) neurons with respective parameters
\(\theta^1\), \(\ldots\,\), \(\theta^N\), the associated network's output reads
\begin{equation}
\label{eq:nn-ps-feature}
\mathbb R^d \ni z \mapsto \Phi^N (\theta^1,\ldots,\theta^N ; z) = \frac 1N \sum_{i=1}^N \Phi(\theta^i; z) \in \mathbb R.
\end{equation}
Here \(z\) should be considered as the input of the network,
i.e.\ the \emph{feature}, and the value \(\Phi^N(\theta^1, \ldots, \theta^N; z)\)
should correspond to the \emph{label}.
We wish to find the optimal neuron parameters \((\theta_i)_{i=1}^N\)
for a possibly unknown distribution of feature-label tuples
\(\mu \in \mathcal P (\mathbb R^{d + d'})\).
In order to quantify the goodness of networks, we define the \emph{loss}:
\begin{equation}
\label{eq:nn-ps-loss}
F_\textnormal{NNet}^N (\theta^1,\ldots,\theta^N) =
\frac N2 \int \lvert y - \Phi^N(\theta^1,\ldots,\theta^N; z)\rvert^2 \mu(dzdy).
\end{equation}
It is proposed in \cite{HRSS19, ulpoc} that instead of minimizing
the original loss \cref{eq:nn-ps-loss},
we consider the mean field output function
\(\Expect^{\Theta \sim m} [\Phi (\Theta; \cdot)]\)
and minimize the mean field loss
\begin{equation}
\label{eq:nn-mf-loss}
F^N_\textnormal{NNet} (m) = \int \bigl| y - \Expect^{\Theta \sim m} [\Phi (\Theta; z)] \bigr|^2 \mu (dz dy).
\end{equation}
We also add a quadratic regularizer
\[
F_\textnormal{Ext}(m) = \frac{\lambda}{2} \int |\theta|^2 m(d\theta)
\]
with regularization parameter \(\lambda >0\).
The final optimization problem then reads
\begin{equation}
\label{eq:nn-mf-optimize}
\inf_{m\in\mathcal{P}_2(S)} F(m) \coloneqq F_\textnormal{NNet}(m) + F_\textnormal{Ext}(m).
\end{equation}
Following the calculations in \cite{ulpoc} we can show that
if both the truncation and activation function are bounded
and has bounded derivatives of up-to-second order,
then the conditions \cref{eq:convex,eq:lip,eq:x-lsi} are verified.
Finally, the third-order derivatives \(\frac{\delta^3 F}{\delta m^3}\)
is a constant thanks to the fact that the loss function is quadratic,
and therefore the condition \cref{eq:measure-third}
is satisfied with \(M^F_{mmm} = 0\).

\begin{rem}
Following \cite[Remark 3.6]{ulpoc}, we recognize that the SDE \cref{eq:mf-sde}
describes the continuous version of the gradient descent algorithm with momentum.
Among various momentum gradient descent methods
commonly used to train neural networks,
the most prevalent ones are RMSProp and Adam algorithms
(see \cite{hinton2012neural,kingma2014adam}),
where the momentum is accumulated and the step size is adapted along the dynamics.
In \cite{liu2020improved,sebbouh2021almost,reddi2019convergence}
the authors studied the convergence of these momentum-based algorithms
and compared them to algorithms without momentum based on optimization theory.
We note that estimates of the discretization error
and optimal parameters can also be found in these studies.
\end{rem}

\subsection{Numerical experiments}
\label{sec:numerical}

We present our numerical experiments in this section.
Our experiments are based on the discretized version of
a particle system dynamics \cref{eq:ps-sde-noscaling}.
We first explain the optimization problem and the numerical algorithm,
and then present our two experiments:
the first investigates the convergence behavior as the number of particles
tends to infinity,
and the second compares the kinetic dynamics
to the corresponding overdamped dynamics.

\paragraph{Problem setup and momentum algorithm.}
We aim to solve a supervised learning problem:
our goal is to classify the handwritten digits ``\(4\)'' and ``\(6\)''
by a two-layer neural network.
We randomly choose
\(K = 10^4\) samples from the MNIST dataset \cite{mnist}
and denote by \((z_k)_{k=1}^K\) the figures in \(28 \times 28\) pixel format,
i.e.\ each \(z_k\) belongs to \(\mathbb R^{28 \times 28} = \mathbb R^{784}\),
and by \((y_k)_{k=1}^K\) the \emph{one-hot} vectors for the two classes of digits,
i.e.\ if the \(k\)-th figure corresponds to the digit ``\(4\)'',
then \(y_k = (1, 0)\), otherwise \(y_k = (0, 1)\).
See \cref{fig:mnist-sample} for random samples in the dataset.
We choose \(N\) particles
and use the sigmoid function as the activation,
i.e.\ \(\varphi(x) = 1\big/\bigl(1+\exp(-x)\bigr)\).
The truncation function is fixed by
\[
\ell(x) = L \tanh (x/L) = L\, \frac{\exp(2x/L)-1}{\exp(2x/L) + 1}
\]
and its threshold equals \(L\).
The quadratic regularization parameter is denoted by \(\lambda\).
Following the arguments of \cite{ulpoc} and the precedent section,
all the conditions of our theorems
\cref{eq:convex,eq:lip,eq:x-lsi,eq:measure-third} are satisfied.
In the beginning of training process, the neuron positions
\((\Theta^i_0)_{i=1}^N = (C^{x,i}_0, A^{x,i}_0, B^{x,i}_0)_{i=1}^N\)
and momenta \((\Psi^i_0)_{i=1}^N = (C^{v,i}_0, A^{v,i}_0, B^{v,i}_0)_{i=1}^N\)
are sampled independently from a given initial distribution
\(m_0^x\),
\(m_0^v \in \mathcal P(\mathbb R^2\times \mathbb R^{784} \times \mathbb R)\).
We update the parameters \((\Theta^i_0)_{i=1}^N\)
and \((\Psi^i_0)_{i=1}^N\) following the discrete-time version
of the underdamped Langevin SDE \cref{eq:mf-sde-noscaling}
with fixed set of parameters \((\alpha, \gamma, \sigma)\),
that is, we calculate the neurons' evolution by \cref{alg:nn-nmgd}.

\begin{figure}
\centering
\includegraphics[width=0.8\linewidth]{./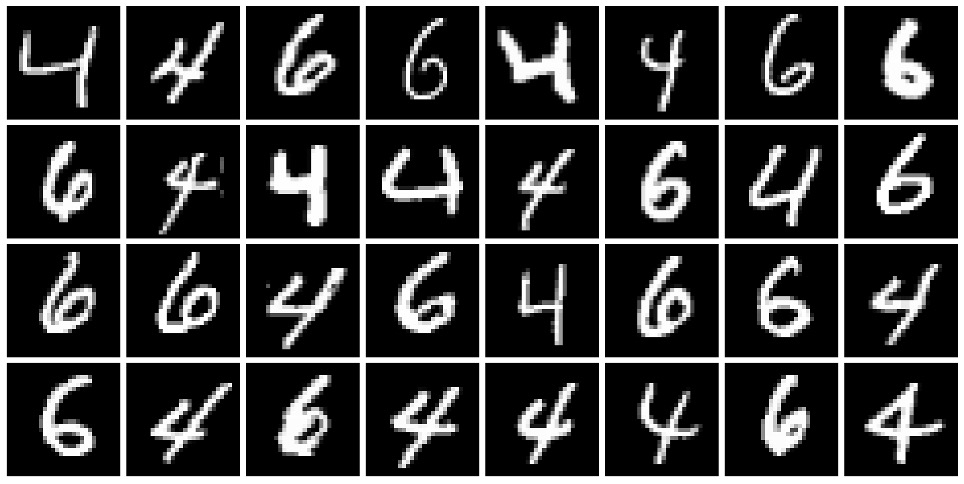}
\caption{Randomly chosen handwritten digits ``\(4\)'' and ``\(6\)'' from the MNIST dataset.}
\label{fig:mnist-sample}
\end{figure}

\paragraph{Convergence when \(N \to +\infty\).}

To study the behavior of the momentum training dynamics when \(N \to +\infty\)
we conduct independent experiments with the an increasing number of particles:
\(N = 2^P\) for \(P=5\), \(6\), \(\ldots\,\), \(10\)
and repeat the experiment $10$ times for each $N$.
The hyperparameters for this experiment
are listed in the second column of \cref{tab:hypara}.

To quantity the convergence,
we compute \(\frac 1N F_\textnormal{NNet}^N \bigl(\Theta^1_t,\ldots,\Theta^N_t\bigr)\)
and \(\frac{1}{N}F_\textnormal{Kinet}^N\bigl(\Psi^1_t,\ldots,\Psi^N_t\bigr)\),
where \(F_\textnormal{Kinet}^N(\Psi^1,\ldots,\Psi^N)
\coloneqq \frac 12 \sum_{i=1}^N |\Psi^i|^2\).
We then compute its average of the respective quantities
over the \(10\) repeated runs.
The evolutions of \(\frac 1NF_\textnormal{NNet}^N\) and
\(\frac 1NF_\textnormal{NNet}^N + \frac 1NF_\textnormal{Kinet}^N\)
are plotted in \cref{fig:loss_withoutl2}
and \cref{fig:loss_withl2} respectively,
and can be characterized by two distinct phases.
In the first phase, both the quantities decrease
and the second quantity decreases exponentially, for every \(N\).
We also find that in this phase
the convergence rates are almost the same for different \(N\)
and this is coherent with the behavior indicated by our theoretical upper bound
\cref{eq:ps-entropy-convergence}.
We also observe that \(\frac 1N F_\textnormal{NNet}^N\) fluctuates in a stronger way
than \(\frac 1N F_\textnormal{NNet}^N + \frac 1N F_\textnormal{Kinet}^N\).
In the second phase, both the values cease to decrease
but the remnant values differ for different \(N\).

To investigate the relationship between the remnant values in the second phase
and the number of particles \(N\),
we compute the average value of
\(\frac 1N F_\textnormal{NNet}^N  + \frac{1}{N}F_\textnormal{Kinet}^N\)
of the last \(500\) training epochs for each individual run
and plot their values in \cref{fig:trainedloss}.
Motivated by the upper bound \cref{eq:ps-entropy-convergence} in \cref{thm:ps-entropy-convergence},
we fit the remnant values by \(C' + \frac{C}{N}\)
and find the values are well fitted by this curve.

\paragraph{Comparison to algorithm without momentum.}

We also investigate the difference between gradient descent algorithms
with and without momentum by working on the same set of hyperparameters,
listed in the last column of \cref{tab:hypara}.
It is found that the algorithm with momentum leads to much stronger fluctuations
compared the algorithm without momentum (see \cref{fig:tosgd}).
Both algorithms cease to decrease after certain training epochs,
but the momentum algorithm leads to better loss in the end.
This may be explained by the fact that the presence of momentum helps
the particles to escape local minima.

\begin{table}[htbp]
\centering
\begin{tabular}{ccc}
\toprule
Hyperparameter & First Exp.'s Value   & Second Exp.'s Value  \\
\midrule
$N$            & $[128,256,512,1024,2048]$ & $256$           \\
$\Delta t$     & $0.02$               & $0.01$               \\
$T$	           & $300$                & $500$                \\
$m_0^x$        & $\mathcal N(0,0.01)$ & $\mathcal N(0,0.01)$ \\
$m_0^v$        & $\mathcal N(0,0.25)$ & $\mathcal N(0,0.01)$ \\
$L$            & $500$                & $500$                \\
$\lambda$      & $10^{-4}$            & $10^{-3}$                  \\
$\alpha$       & $1$                  & $1$                  \\
$\gamma$       & $0.1$                & $0.1$                \\
$\sigma$       & $0.01\sqrt 2$        & $0.01\sqrt 2$        \\
\bottomrule
\end{tabular}
\caption{Hyperparameters of neural networks' trainings.}
\label{tab:hypara}
\end{table}

\begin{algorithm}
\caption{Noised momentum gradient descent for training a two-layer neural network}
\label{alg:nn-nmgd}
\KwIn{number of particles $N$,
truncation \(L\),
data set $(z_k,y_k)_{k=1}^K$,
noise $\sigma$,
friction $\gamma$,
$l_2$ regularization $\lambda$,
initial distribution $(m_0^x,m_0^v)$,
time step $\Delta t$,
time horizon $T$
}
\KwOut{$(\Theta^i_{T})_{i=1}^N$}
generate i.i.d.\
$\Theta_0^{i} = \bigl(A^{x,i}_0,B^{x,i}_0,C^{x,i}_0\bigr) \sim m_0^x$
for $i = 1$, \dots, $N$\;
generate i.i.d.\ $\Psi_0^{i} = \bigl(A^{v,i}_0,B^{v,i}_0,C^{v,i}_0\bigr) \sim m_0^v$
for $i = 1$, \dots, $N$\;
\For{$t=0$, $\Delta t$, $2\Delta t$, \dots, $T-\Delta t$}{
generate i.i.d.\ $\mathcal{N}^i_{t} \sim \mathcal N(0,1)$
for $i=1$, \dots, $N$\;
\tcp{update particles according to discretized underdamped Langevin}
\For{$i = 1$, \dots, $N$}{
$\begin{multlined}[b]\textstyle\Psi^i_{t+\Delta t} \leftarrow (1-\gamma \Delta t)\Psi^i_{t}
- D_m F_\textnormal{NNet} \bigl(\frac{1}{N}\sum_{j=1}^N \delta_{\Theta^j_{t}},\Theta^i_{t} \bigr) \Delta t \\
- \lambda \Theta^i_{t} \Delta t
+ \sigma \sqrt{\Delta t} \mathcal{N}^i_{t}\end{multlined}$\;
$\Theta^i_{t+\Delta t} \leftarrow \Theta^i_{t} + \Psi^i_{t+\Delta t} \Delta t  $\;
\tcp{where
\(\begin{multlined}[t]\textstyle D_m F_\textnormal{NNet} \bigl( \frac 1N \sum_{j=1}^N \delta_{\Theta^j_t}, \Theta^i_t \bigr) \\
\textstyle = \frac{1}{K} \sum_{k=1}^K \bigl(y_k - \Psi^N( \Theta^1_t, \ldots, \Theta^N_t; z_k) \bigr)
\frac{\partial \Psi}{\partial \theta} (\Theta^i_t; z_k)\end{multlined}\)}}}
\end{algorithm}

\begin{figure}[htbp]
\begin{minipage}{0.45\linewidth}
\centering
\includegraphics[width=\linewidth]{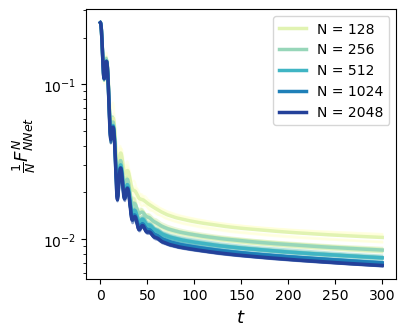}
\captionof{figure}{Individual (shadowed) and \(10\)-averaged (bold) losses without kinetic energy versus time.}
\label{fig:loss_withoutl2}
\end{minipage}
\hfill
\begin{minipage}{0.45\linewidth}
\centering
\includegraphics[width=\linewidth]{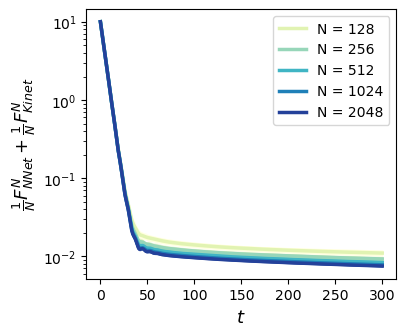}
\captionof{figure}{Individual (shadowed) and \(10\)-averaged (bold) losses with kinetic energy versus time.}
\label{fig:loss_withl2}
\end{minipage}
\end{figure}

\begin{figure}[htbp]
\begin{minipage}{0.48\linewidth}
\centering
\includegraphics[width=\linewidth]{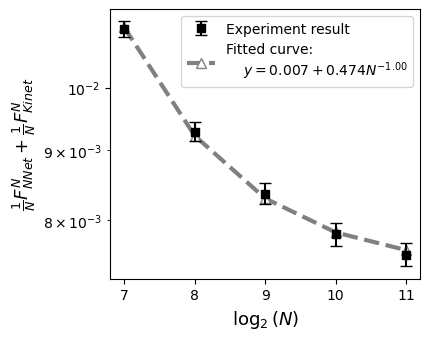}
\captionof{figure}{Average values of
$\frac{1}{N}F_{\textnormal{NNet}} + \frac{1}{N}F_{\textnormal{Kinet}}$
over the last \(500\) epochs.
The mean (black squares) and standard derivations (error bars) are calculated from the \(10\) independent runs.
Dashed curve fits the data.
}
\label{fig:trainedloss}
\end{minipage}
\hfill
\hfill
\begin{minipage}{0.45\linewidth}
\centering
\includegraphics[width=\linewidth]{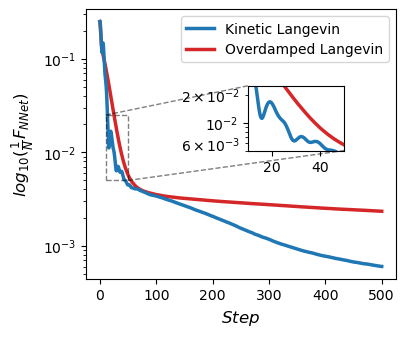}
\captionof{figure}{Target function $\frac{1}{N}F_{\textnormal{NNet}}$ for underdamped Langevin (blue) and overdamped Langevin (red) versus time.}
\label{fig:tosgd}
\end{minipage}

\end{figure}

\section{Entropic convergence}
\label{sec:entropy-convergence}

\subsection{Collection of known results}
\label{sec:collection-results}

Before moving on to the proofs,
we first state some elementary results without proofs.
They are either immediate consequences of the corresponding ones in our previous work \cite{ulpoc},
or easy adaptations thereof.

\begin{lem}[Existence and uniqueness of invariant measures]
\label{lem:exist-unique-invariant-measures}
If \(F\) satisfies \cref{eq:convex,eq:lip,eq:x-lsi},
then there exist unique measures \(m_\infty\) and \(m^N_\infty\)
satisfying \cref{eq:mf-invariant-measure,eq:ps-invariant-measure} respectively
and they have finite exponential moments.
\end{lem}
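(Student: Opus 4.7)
The plan is to reduce the kinetic statement to the overdamped analogue already established in \cite{ulpoc}. The densities in \cref{eq:mf-invariant-measure,eq:ps-invariant-measure} factor as a purely spatial part times $\exp(-|v|^2/2)$, so that $m_\infty = m_\infty^x \otimes \gamma^d$ and $m_\infty^N = m_\infty^{N,x} \otimes \gamma^{dN}$, where $\gamma^k$ denotes the standard Gaussian on $\mathbb R^k$, $m_\infty^x$ solves $m_\infty^x(dx) \propto \exp\bigl(-\tfrac{\delta F}{\delta m}(m_\infty^x, x)\bigr) dx$, and $m_\infty^{N,x}(d\mathbf x) \propto \exp(-NF(\mu_{\mathbf x})) d\mathbf x$. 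Since Gaussians have all exponential moments, the existence, uniqueness, and exponential integrability of $m_\infty$ and $m_\infty^N$ reduce to the corresponding claims for the spatial marginals.

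For the mean field spatial marginal $m_\infty^x$, I would invoke the construction of \cite{ulpoc}, which uses the direct method on the overdamped free energy $\mathcal F^x(m) \coloneqq F(m) + H(m)$. Convexity of $F$ combined with the strict convexity of the entropy gives strict convexity of $\mathcal F^x$, while the uniform LSI \cref{eq:x-lsi} together with the Lipschitz bound \cref{eq:lip} supplies coercivity and lower semicontinuity in $\mathcal P_2(\mathbb R^d)$. The unique minimizer thus obtained satisfies the Euler--Lagrange identity $\tfrac{\delta F}{\delta m}(m_\infty^x, \cdot) + \log m_\infty^x = \text{const}$, which is precisely the fixed-point relation defining $m_\infty^x$. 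Finite exponential moments of $m_\infty^x$ then follow from Herbst's Gaussian-concentration argument applied to the LSI \cref{eq:x-lsi} at $m = m_\infty^x$, using the fixed-point identity $\Pi^x(m_\infty^x) = m_\infty^x$.

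For the $N$-particle spatial marginal $m_\infty^{N,x}$ the situation is simpler, since it is a genuine Gibbs measure on $\mathbb R^{dN}$ and only finiteness of its partition function is at issue. Here convexity of $F$ applied at the mean field fixed point yields the pointwise bound $NF(\mu_{\mathbf x}) \geq \sum_{i=1}^N \tfrac{\delta F}{\delta m}(m_\infty^x, x^i) - C = -\sum_{i=1}^N \log m_\infty^x(x^i) - C'$, whence $\exp(-NF(\mu_{\mathbf x})) \leq C'' \prod_{i=1}^N m_\infty^x(x^i)$. This domination simultaneously gives finiteness of the $N$-particle partition function and, combined with the exponential moments of $m_\infty^x$ and $|\mathbf x| \leq \sum_i |x^i|$, exponential integrability of $m_\infty^{N,x}$ in the Euclidean norm on $\mathbb R^{dN}$. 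The main (and essentially only) obstacle beyond a direct citation of \cite{ulpoc} is this convexity-based domination; everything else is either the product decoupling or a direct transfer of the overdamped argument.
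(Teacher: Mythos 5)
Your proposal is correct, and it aligns with the paper's intent: the paper does not prove \cref{lem:exist-unique-invariant-measures} but states it is an immediate consequence or easy adaptation of the corresponding results in \cite{ulpoc}, and the factorization $m_\infty = m_\infty^x \otimes \gamma^d$, $m_\infty^N = m_\infty^{N,x} \otimes \gamma^{dN}$ followed by the overdamped variational argument, Herbst's lemma, and the convexity-based domination of $\exp(-NF(\mu_{\mathbf x}))$ by $\prod_i m_\infty^x(x^i)$ are exactly the adaptations being alluded to. One minor point worth stating when writing this out: the constants in the tangent inequality $NF(\mu_{\mathbf x}) \geq -\sum_i \log m_\infty^x(x^i) - C_N$ depend on $N$ (through $N F(m_\infty^x)$, $N\int \frac{\delta F}{\delta m}(m_\infty^x,\cdot)\,m_\infty^x$ and $N\log Z^x$), which is harmless since only finiteness of the partition function and of the exponential moments of the already-normalized measure $m_\infty^{N,x}$ is needed, not a uniform-in-$N$ bound.
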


\begin{lem}[Mean field entropy sandwich]
\label{lem:mf-entropy-sandwich}
Assume \(F\) satisfies \cref{eq:convex,eq:lip,eq:x-lsi}.
Then for every \(m \in \mathcal P_2(\mathbb R^{2d})\),
we have
\begin{multline}
\label{eq:mf-entropy-sandwich}
H(m | m_\infty) \leq \mathcal F(m) - \mathcal F(m_\infty)
\leq H(m | \hat m) \\
\leq \biggl(1 + \frac{M^F_{mm}}{\rho} + \frac{(M^F_{mm})^2}{2\rho^2} \biggr)
H(m | m_\infty),
\end{multline}
where \(\rho\) is defined by \cref{eq:defn-rho}.
Here, the leftmost inequality holds even without the uniform LSI condition
\cref{eq:x-lsi}, once there exists a measure $m_\infty$
satisfying \cref{eq:mf-invariant-measure} and having finite exponential moments.
\end{lem}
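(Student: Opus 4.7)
\proofstep{Proof proposal}
The three inequalities in \cref{eq:mf-entropy-sandwich} will be proved separately. A key simplification throughout: both \(m_\infty\) and \(\hat m\) factor as product measures in \((x,v)\) with the \emph{same} standard Gaussian marginal in \(v\). Hence by the chain rule for relative entropy, and because \(\frac12\int |v|^2 m + H(m)\) can be rewritten in terms of \(H(m^v | \mathcal N(0,I_d))\) plus marginals in \(x\), all three comparisons reduce to statements about spatial marginals.

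\emph{First (leftmost) inequality.} Expanding \(\log m_\infty\) via \cref{eq:mf-invariant-measure} and using \(\frac12\int|v|^2 m_\infty = d/2\) together with the corresponding expression for \(H(m_\infty)\), one obtains the algebraic identity
\[
\mathcal F(m) - \mathcal F(m_\infty) - H(m|m_\infty)
= F(m^x) - F(m^x_\infty) - \int \tfrac{\delta F}{\delta m}(m^x_\infty,x)\,(m^x - m^x_\infty)(dx),
\]
which is nonnegative by the tangent-plane inequality for the convex \(F\) at \(m^x_\infty\). Only \cref{eq:convex} and the existence of \(m_\infty\) with finite exponential moments are needed, which matches the remark in the statement.

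\emph{Second (middle) inequality.} Introduce the linearised free energy at \(m\):
\[
\tilde{\mathcal F}(\mu)
= F(m^x) + \int \tfrac{\delta F}{\delta m}(m^x,x)(\mu^x - m^x)(dx)
+ \tfrac12 \int |v|^2 \mu + H(\mu).
\]
This is a linear-plus-entropy functional on \(\mathcal P_2(\mathbb R^{2d})\) whose unique minimiser is exactly \(\hat m\), and direct computation gives \(\tilde{\mathcal F}(\mu) - \tilde{\mathcal F}(\hat m) = H(\mu | \hat m)\). Tautologically \(\tilde{\mathcal F}(m) = \mathcal F(m)\); and by the tangent-plane inequality for convex \(F\) at \(m^x\), \(\tilde{\mathcal F}(m_\infty) \leq \mathcal F(m_\infty)\). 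Chaining these three facts yields the desired bound.

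\emph{Third (rightmost) inequality.} By the product structure noted above,
\[
H(m | \hat m) - H(m | m_\infty) = H(m^x | \hat m^x) - H(m^x | m^x_\infty).
\]
Setting \(g(x) \coloneqq \tfrac{\delta F}{\delta m}(m^x,x) - \tfrac{\delta F}{\delta m}(m^x_\infty,x)\) and expanding the log densities yields
\[
H(m^x | \hat m^x) - H(m^x | m^x_\infty)
= \int g\,dm^x + \log \int e^{-g}\,dm^x_\infty.
\]
The Lipschitz hypothesis \cref{eq:lip} gives \(|\nabla g(x)| \leq M^F_{mm}\, W_1(m^x, m^x_\infty)\), so \(g\) is \(L_g\)-Lipschitz with \(L_g \coloneqq M^F_{mm}\, W_1(m^x, m^x_\infty)\). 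Kantorovich--Rubinstein bounds \(\int g\,d(m^x - m^x_\infty) \leq L_g W_1(m^x, m^x_\infty)\); Herbst's argument, applied to \(m^x_\infty = \Pi^x(m^x_\infty)\) and its LSI constant \(\rho^x \geq \rho\), gives \(\log \int e^{-g}\,dm^x_\infty \leq -\int g\,dm^x_\infty + L_g^2/(2\rho)\). Combining:
\[
H(m^x | \hat m^x) - H(m^x | m^x_\infty)
\leq L_g\, W_1(m^x, m^x_\infty) + \tfrac{L_g^2}{2\rho}.
\]
Finally \(T_2\) \cref{eq:t2} applied to \(m^x_\infty\) gives \(W_1(m^x, m^x_\infty) \leq W_2(m^x, m^x_\infty) \leq \sqrt{H(m^x|m^x_\infty)/\rho}\); substituting both in \(L_g\) and in the first term, and using the data-processing inequality \(H(m^x | m^x_\infty) \leq H(m | m_\infty)\), yields exactly the claimed multiplicative constant.

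\emph{Main obstacle.} The delicate step is the third inequality, because \(L_g\) is itself proportional to \(W_1(m^x, m^x_\infty)\), so the Herbst bound on its own only produces a \emph{cubic-in-\(W_1\)} estimate; one must simultaneously bound \(L_g\) and \(W_1\) by \(\sqrt{H/\rho}\) via \(T_2\) to absorb this self-referential dependence into a clean linear-in-\(H\) estimate. This is where the uniform LSI assumption \cref{eq:x-lsi} is used in both of its manifestations (concentration through Herbst and transport through \(T_2\)).
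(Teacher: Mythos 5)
Your proof is correct and follows the standard approach the paper alludes to (it defers the lemma to the corresponding overdamped result in \cite{ulpoc}, describing it as an easy adaptation, and you supply exactly that adaptation): the tangent-plane convexity inequality at \(m^x_\infty\) and at \(m^x\) gives the first two bounds, and for the third you reduce to the spatial marginals via the shared Gaussian \(v\)-factor of \(\hat m\) and \(m_\infty\), then combine Herbst's argument with Talagrand's \(T_2\). Two small remarks: in your closing paragraph the naive Herbst estimate is quadratic, not cubic, in \(W_1\) (both \(L_g W_1 = M^F_{mm} W_1^2\) and \(L_g^2/(2\rho)\) scale as \(W_1^2\)); and using the full strength of the transport inequality, namely \(W_2^2(m^x, m^x_\infty) \leq H(m^x | m^x_\infty)/(2\rho^x)\), would produce the slightly sharper constant \(1 + \frac{M^F_{mm}}{2\rho} + \frac{(M^F_{mm})^2}{4\rho^2}\), which shows that the stated constant in the lemma is not tight and that your relaxed bound \(W_2^2 \leq H/\rho\) is precisely what reproduces it.
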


\begin{lem}[Particle system's entropy inequality]
\label{lem:ps-entropy-sandwich}
Assume that \(F\) satisfies \cref{eq:convex}
and that there exists a measure \(m_\infty \in \mathcal P_2(\mathbb R^{2d})\)
verifying \cref{eq:mf-invariant-measure}.
Then for all \(m^N \in \mathcal P_2(\mathbb R^{dN})\) of finite entropy, we have
\begin{equation}
\label{eq:ps-entropy-sandwich}
H(m^N | m_\infty^{\otimes N})
\leq \mathcal F^N(m^N) - N \mathcal F(m_\infty).
\end{equation}
\end{lem}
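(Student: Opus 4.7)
The plan is to unfold the relative entropy directly and then invoke the functional convexity assumption \cref{eq:convex} as the only genuinely nontrivial ingredient. The product structure of $m_\infty^{\otimes N}$ together with the explicit form \cref{eq:mf-invariant-measure} of $m_\infty$ makes the bookkeeping mostly algebraic.

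First, I would write the log-density of $m_\infty$ out explicitly: denoting by $Z$ the normalizing constant in \cref{eq:mf-invariant-measure}, we have
\[
\log m_\infty(x,v) = -\tfrac{\delta F}{\delta m}(m_\infty^x, x) - \tfrac12 |v|^2 - \log Z.
\]
Tensorizing and substituting into the definition of relative entropy yields
\[
H\bigl(m^N \big| m_\infty^{\otimes N}\bigr)
= H(m^N) + \sum_{i=1}^N \int \tfrac{\delta F}{\delta m}(m_\infty^x, x^i)\, m^N + \tfrac12 \int |\mathbf{v}|^2\, m^N + N \log Z.
\]
The finite-moment hypotheses on $m^N$ and the finite exponential moments of $m_\infty$ ensure all integrals make sense.

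Next, I would apply the functional convexity of $F$ in the form $F(\nu) \geq F(m_\infty^x) + \int \frac{\delta F}{\delta m}(m_\infty^x, x)(\nu - m_\infty^x)(dx)$ with $\nu = \mu_\mathbf{x}$. Rearranging,
\[
\sum_{i=1}^N \tfrac{\delta F}{\delta m}(m_\infty^x, x^i)
= N \int \tfrac{\delta F}{\delta m}(m_\infty^x, x)\, \mu_\mathbf{x}(dx)
\leq N F(\mu_\mathbf{x}) - N F(m_\infty^x) + N \int \tfrac{\delta F}{\delta m}(m_\infty^x, x)\, m_\infty^x(dx).
\]
Plugging this into the identity above gives
\[
H\bigl(m^N \big| m_\infty^{\otimes N}\bigr)
\leq \mathcal F^N(m^N) + N \Bigl(\log Z - F(m_\infty^x) + \int \tfrac{\delta F}{\delta m}(m_\infty^x, x)\, m_\infty^x(dx)\Bigr).
\]

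Finally, to identify the bracketed constant with $-\mathcal F(m_\infty)$, I would compute $H(m_\infty)$ by integrating $\log m_\infty$ against $m_\infty$ itself, obtaining
\[
H(m_\infty) = - \int \tfrac{\delta F}{\delta m}(m_\infty^x, x)\, m_\infty^x(dx) - \tfrac12 \int |v|^2\, m_\infty - \log Z.
\]
Rearranging this identity and substituting yields exactly $\mathcal F^N(m^N) - N\mathcal F(m_\infty)$ on the right-hand side, finishing the proof. There is no real obstacle here: the convexity step is the only inequality and everything else is a normalization computation. The one thing I would be careful about is checking that the finite exponential moments of $m_\infty$ justify the identification of $H(m_\infty)$ via integration against the log-density, and that the $\frac12|\mathbf{v}|^2$ term in $m^N$ and the linear-in-$\frac{\delta F}{\delta m}$ term are both integrable under the finite-second-moment and finite-entropy assumption on $m^N$ (via \cref{eq:lip} which controls $\frac{\delta F}{\delta m}$ at most quadratically in $x$).
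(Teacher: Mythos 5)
Your proof is correct and proceeds exactly as one would expect: unfold the relative entropy using the explicit log-density of $m_\infty$, apply the tangent inequality for the convex functional $F$ at $m_\infty^x$ with test measure $\mu_{\mathbf x}$, and then identify the normalization-plus-linear terms with $-N\mathcal F(m_\infty)$. The paper itself does not spell out this proof (it defers to the earlier overdamped work \cite{ulpoc}), but the same tangent inequality appears verbatim in the paper's proof of \cref{thm:ps-entropy-convergence} (at \cref{eq:ps-entropy-convergence-tangent}), so your route is the intended one. Two points worth noting, neither a real gap: first, the tangent inequality $F(\nu) \geq F(m_\infty^x) + \int \frac{\delta F}{\delta m}(m_\infty^x,\cdot)\,(\nu - m_\infty^x)$ uses both convexity \cref{eq:convex} \emph{and} the existence of the linear functional derivative; the latter is implicit in the lemma's hypothesis that a measure satisfying \cref{eq:mf-invariant-measure} exists. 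Second, your integrability remark is the right thing to check: the quadratic growth of $\frac{\delta F}{\delta m}(m_\infty^x,\cdot)$ (via \cref{eq:lip}) together with $m^N \in \mathcal P_2$ and $H(m^N) < \infty$ ensures that the additive decomposition $H(m^N | m_\infty^{\otimes N}) = H(m^N) - \int \log m_\infty^{\otimes N}\,dm^N$ is legitimate and that every term you manipulate is finite.
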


\begin{lem}[Information inequalities]
\label{lem:info-ineqs}
Let \(X_1\), \(\ldots\,\), \(X_N\) be measurable spaces,
\(\mu\) be a probability on the product space
\(X = X_1 \times \cdots \times X_N\)
and \(\nu = \nu^1 \otimes \cdots \otimes \nu^N\) be a \(\sigma\)-finite measure.
Then
\begin{equation}
\label{eq:info-ineqs}
\sum_{i=1}^N H(\mu^i | \nu^i) \leq H(\mu | \nu) \leq \sum_{i=1}^N
\int H\bigl(\mu^{i|-i}(\cdot|\mathbf x^{-i}) \big| \nu^i\bigr)
\mu^{-i} (d\mathbf x^{-i}).
\end{equation}
Here we set the rightmost term to \(+\infty\) if the conditional distribution \(\mu^{i|-i}\) does not exist \(\mu^{-i}\)-a.e.
\end{lem}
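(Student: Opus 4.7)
The plan is to deduce both inequalities from the chain rule of relative entropy together with a Jensen-type monotonicity of the expected conditional relative entropy under enlargement of the conditioning variables.

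First I would record the chain rule with a product reference measure: since the conditional \(\nu^{i|1,\ldots,i-1}\) coincides with the marginal \(\nu^i\) when \(\nu = \bigotimes_j \nu^j\), iterating the standard relative-entropy chain rule along the ordering \(1,2,\ldots,N\) yields the exact identity
\[
H(\mu|\nu) = \sum_{i=1}^N \int H\bigl(\mu^{i|1,\ldots,i-1}(\cdot|\mathbf{x}^{1,\ldots,i-1}) \big| \nu^i\bigr) \mu^{1,\ldots,i-1}(d\mathbf{x}^{1,\ldots,i-1}).
\]
Next I would establish the following monotonicity lemma: for subsets \(S \subset S' \subset -i\),
\[
\int H\bigl(\mu^{i|S}(\cdot|\mathbf{x}^S) \big| \nu^i\bigr) \mu^S(d\mathbf{x}^S)
\leq \int H\bigl(\mu^{i|S'}(\cdot|\mathbf{x}^{S'}) \big| \nu^i\bigr) \mu^{S'}(d\mathbf{x}^{S'}).
\]
The key point is the tower identity \(\mu^{i|S}(\cdot|\mathbf{x}^S) = \int \mu^{i|S'}(\cdot|\mathbf{x}^{S'}) \mu^{S'\setminus S\,|\,S}(d\mathbf{x}^{S'\setminus S}|\mathbf{x}^S)\), which expresses \(d\mu^{i|S}/d\nu^i\) as a conditional average of \(d\mu^{i|S'}/d\nu^i\) given \(\mathbf{x}^S\); applying Jensen's inequality to the convex function \(t \mapsto t \log t\) and integrating successively against \(\nu^i\) and \(\mu^S\) yields the claim.

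The two ingredients combine cleanly. For the left inequality I apply the monotonicity with \(S = \emptyset\) and \(S' = \{1,\ldots,i-1\}\) to each term of the chain rule, obtaining \(H(\mu^i|\nu^i) \leq \int H(\mu^{i|1,\ldots,i-1}(\cdot|\mathbf{x}^{1,\ldots,i-1})|\nu^i) \mu^{1,\ldots,i-1}\) term by term, which sums to \(\sum_i H(\mu^i|\nu^i) \leq H(\mu|\nu)\). For the right inequality I use instead \(S = \{1,\ldots,i-1\}\) and \(S' = -i\), which gives the reverse bound per term and, after summation, \(H(\mu|\nu) \leq \sum_i \int H(\mu^{i|-i}(\cdot|\mathbf{x}^{-i})|\nu^i) \mu^{-i}\).

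The argument is essentially measure-theoretic and there is no serious obstacle: the mild care needed is in justifying the existence of regular conditional distributions (which is standard under the implicit Polish/Borel structure) and applying Fubini--Tonelli to exchange integrals against \(\nu^i\) and \(\mu^S\) in the Jensen step. When some of the conditional entropies on the right-hand side of \cref{eq:info-ineqs} are infinite, the convention stated in the lemma makes the upper bound trivial, while infinite values of \(H(\mu|\nu)\) force \(H(\mu^i|\nu^i) = +\infty\) for at least one \(i\) via the chain-rule identity, so the lower bound is also unproblematic in the degenerate cases.
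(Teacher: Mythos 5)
Your proof is correct and takes the standard route that the paper itself implicitly relies on (the lemma is stated without proof and deferred to the earlier work \cite{ulpoc}). The two ingredients you isolate — the chain rule for relative entropy against a product reference measure, and the monotonicity of expected conditional relative entropy under enlargement of the conditioning variables, obtained from Jensen applied to $t\mapsto t\log t$ through the tower identity — are exactly what is needed, and your deduction of both inequalities from them by choosing $S=\emptyset$, $S'=\{1,\ldots,i-1\}$ and $S=\{1,\ldots,i-1\}$, $S'=-i$ respectively is clean. It is worth noting that the monotonicity lemma is equivalently the data processing inequality applied to the projection of $\mu^{\{i\}\cup S'}$ and $\nu^i\otimes\mu^{S'}$ onto the coordinates $\{i\}\cup S$; and that the left inequality can also be read off directly from the identity $H(\mu|\nu)=H\bigl(\mu\,\big|\,\mu^1\otimes\cdots\otimes\mu^N\bigr)+\sum_i H(\mu^i|\nu^i)$, which sidesteps the conditional decomposition altogether.

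One small slip in the final paragraph: it is not true that $H(\mu|\nu)=+\infty$ forces some $H(\mu^i|\nu^i)=+\infty$. The chain rule only forces one of the \emph{conditional} terms $\int H(\mu^{i|1,\ldots,i-1}|\nu^i)\,d\mu^{1,\ldots,i-1}$ to be infinite; the marginal relative entropies can all be finite while the joint is infinite. This does not affect the validity of the lower bound in the degenerate case — when $H(\mu|\nu)=+\infty$ the inequality $\sum_i H(\mu^i|\nu^i)\le H(\mu|\nu)$ is trivially true regardless — but the stated reason is off.
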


\subsection{Mean field system}
\label{sec:mf}

In this section we study the mean field system described by
the Fokker--Planck equation \cref{eq:mf-fp}
and the SDE \cref{eq:mf-sde}.
Our aim is to prove \cref{thm:mf-entropy-convergence}.
To this end, we first show its wellposedness and regularity.

\begin{lem}
\label{lem:wellposedness-regularity}
Suppose \(F\) satisfies \cref{eq:lip}.
Then for every initial value \(m_0\) of finite second moment,
the equation \cref{eq:mf-fp} admits a unique solution in \(C\bigl([0,\infty); \mathcal P_2(\mathbb R^d)\bigr)\).
Moreover, for every \(t > 0\), the measure \(m_t\) is absolutely continuous with respect to the Lebesgue measure.
\end{lem}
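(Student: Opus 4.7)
The plan is to lift the PDE to a McKean--Vlasov SDE and build a solution by Picard iteration on the space of measure flows. For any fixed $\nu_\cdot \in C([0,T]; \mathcal{P}_2(\mathbb R^{2d}))$, the linear SDE
\[
dX_t = V_t dt, \qquad dV_t = -V_t dt - D_m F(\nu_t^x, X_t) dt + \sqrt 2 dW_t,
\]
with initial law $m_0$, has drift $(t, x, v) \mapsto -v - D_m F(\nu_t^x, x)$ that is globally Lipschitz in $(x,v)$ uniformly in $t$ by \cref{eq:lip} and continuous in $t$ (since $\nu_\cdot$ is $W_1$-continuous), so classical Itô theory yields a unique strong solution $(X_\cdot, V_\cdot)$. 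I would then define the map $\Phi : \nu_\cdot \mapsto \Law(X_\cdot, V_\cdot)$; fixed points of $\Phi$ are exactly weak solutions to \cref{eq:mf-fp} by Itô's formula.

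Next I would establish the contraction property of $\Phi$ on a short enough interval via synchronous coupling. Given two inputs $\nu_\cdot$ and $\tilde\nu_\cdot$, drive two copies of the linear SDE by the same Brownian motion from the same initial condition, producing $(X, V)$ and $(\tilde X, \tilde V)$. The Lipschitz bound \cref{eq:lip} together with $W_1 \leq W_2$ gives
\[
|D_m F(\nu_t^x, X_t) - D_m F(\tilde\nu_t^x, \tilde X_t)|
\leq M^F_{mm} W_2(\nu_t, \tilde\nu_t) + M^F_{mx}|X_t - \tilde X_t|,
\]
and a Grönwall argument bounds $\sup_{s \leq T} \Expect[|X_s - \tilde X_s|^2 + |V_s - \tilde V_s|^2]$ by $C(T) \int_0^T W_2^2(\nu_s, \tilde\nu_s) ds$ with $C(T) \to 0$ as $T \to 0$. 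Choosing $T$ small makes $\Phi$ a strict contraction on the complete metric space of $W_2$-continuous flows starting from $m_0$, and its unique fixed point is the sought McKean--Vlasov solution. A standard second-moment estimate on $(X, V)$ using the linear growth of $D_m F$ (implied by \cref{eq:lip}) shows $W_2(m_t, \delta_0)$ stays finite on bounded intervals, so iterating on successive short intervals produces a unique global solution in $C([0,\infty); \mathcal P_2(\mathbb R^{2d}))$.

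For the absolute continuity, I would exploit the hypoelliptic regularization by comparing $(X, V)$ to the drift-free kinetic Ornstein--Uhlenbeck process $(\bar X, \bar V)$ solving $d\bar X_t = \bar V_t dt$, $d\bar V_t = -\bar V_t dt + \sqrt 2 dW_t$ with the same initial law. Since $D_m F$ has linear growth, a suitable localization combined with Novikov's criterion shows that Girsanov's theorem applies and makes $\Law(X_t, V_t)$ absolutely continuous with respect to $\Law(\bar X_t, \bar V_t)$. Conditioned on $(\bar X_0, \bar V_0) = (x_0, v_0)$, the process $(\bar X_t, \bar V_t)$ is Gaussian with the explicit kinetic OU covariance, which is non-degenerate on $\mathbb R^{2d}$ for every $t > 0$; its transition kernel is therefore absolutely continuous with respect to Lebesgue, and integrating against $m_0(dx_0 dv_0)$ preserves this property. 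The main subtlety, beyond the routine bookkeeping in the Picard iteration, will be justifying Novikov's condition uniformly in $t$; I expect this to reduce to moment bounds on $(X, V)$ via the linear growth of the drift, but it is the only point where the degeneracy of the noise in the position variable and the global time horizon genuinely interact.
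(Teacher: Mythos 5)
Your well-posedness argument (Picard iteration on measure flows via synchronous coupling and Grönwall) is exactly what the paper means by ``the existence and uniqueness of the solution is standard,'' so that part matches.

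For the absolute continuity, however, you take a genuinely different route. The paper works on the PDE side: it writes a Duhamel formula for \cref{eq:mf-fp} using Kolmogorov's explicit fundamental solution $\rho_t$ of $\partial_t - \Delta_v + v\cdot\nabla_x$, integrates by parts in the nonlinear source term (justified by a moment bound), and derives an $L^1$ bound on the density directly. You instead argue probabilistically: compare the McKean--Vlasov diffusion (with frozen measure flow) to the kinetic Ornstein--Uhlenbeck process via Girsanov, and use the explicit non-degenerate Gaussian transition kernel of the latter. Both approaches are sound; the paper's is shorter and avoids stochastic calculus altogether, while yours leans on a Girsanov transform whose martingale property needs care because the drift perturbation $-D_m F(\nu_t^x,\cdot)$ has linear growth and the OU position component only has Gaussian tails, so Novikov's condition is delicate uniformly in $T$. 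You correctly flag this as the weak point. It is worth noting that the paper supplies exactly the tool you would need (\cref{lem:girsanov-ui} in the appendix), which replaces Novikov's criterion by a supermartingale argument exploiting precisely the linear-growth structure, so your approach could be made fully rigorous with that lemma, but without it the ``localization plus Novikov'' plan would require restricting to short time intervals or an additional truncation-and-limit argument.
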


\begin{proof}
Since the drift \(D_m F(\cdot, \cdot)\) of the SDE system \cref{eq:mf-sde}
is jointly Lipschitz in measure and in space by our condition \cref{eq:lip},
the existence and uniqueness of the solution is standard.

To show the existence of density we recall Kolmogorov's fundamental solution
\begin{multline*}
\rho_t(x,v; x',v')
\coloneqq \biggl( \frac{\sqrt 3}{2\pi t^2} \biggl)^{d}
\exp \biggl( - \frac{3\lvert x - (x' + tv')\rvert^2}{t^3} \\
+ \frac{3\bigl(x - (x' + tv')\bigr) \cdot (v - v')}{t^2} - \frac{|v - v'|^2}{t}
\biggr)
\end{multline*}
associated to the differential operator
\(\partial_t - \Delta_v + v \cdot \nabla_x\).
Then the Duhamel's formula holds in the sense of distributions:
\begin{multline}
\label{eq:mf-duhamel}
m_t = \int \rho_t(\cdot; z') m_0(dz') \\
+ \int_0^t \iint \rho_{s} (\cdot ; x',v')
\nabla_{v'} \cdot \Bigl( m_{t-s}(dx'dv') \bigl(v' + D_m F(m^x_{t-s}, x')\bigr)\Bigr) ds.
\end{multline}
Since the first moment of \(m_t\) is bounded,
that is, for every \(T > 0\),
\(\sup_{t \in [0,T]} \int (|v| + |x|)m_t(dxdv) < +\infty\),
we can integrate by parts in the second term of \cref{eq:mf-duhamel}
and obtain
\[
\Vert m_t\Vert_{L^1} \leq 1
+ C \int_0^t \sup_{x',v'}\lVert\nabla_{v'} \rho_s(\cdot; x',v')\rVert_{L^1} ds.
\]
By explicit computations we have
\(
\sup_{x',v'}\lVert\nabla_{v'} \rho_s(\cdot; x',v')\rVert_{L^1} = O(s^{-1/2})
\),
from which the existence of the density follows.
\end{proof}

We now introduce a technical condition on the mean field functional:
the mapping \(x \mapsto D_m F(m,x)\) is fourth-order differentiable
with derivatives continuous in measure and in space,
and satisfying
\begin{equation}
\label{eq:space-higher}
\sup_{m \in \mathcal P_2(\mathbb R^d)} \sup_{x \in \mathbb R^d}
\bigl|\nabla^k D_m F(m,x)\bigr| < +\infty,\qquad
\text{$k = 2$, $3$, $4$.}
\end{equation}
This condition will be used to derive some intermediate results
in the following studies of the mean field dynamics.

\begin{defn}[Standard algebra]
\label{defn:std-alg}
We define the \emph{standard algebra} \(\mathcal A_+\) to be the set of
\(C^4\) functions \(h : \mathbb R^{2d} \to (0,\infty)\)
for which there exists a constant \(C\) such that
\[
\lvert\log h(x,v)\rvert \leq C(1 + |x| + |v|)
\quad\text{and}\quad
\sum_{k=1}^4\bigl\lvert\nabla^k h(x,v)\bigr\rvert
\leq \exp\bigl(C(1+|x|+|v|)\bigr)
\]
holds for every \((x, v) \in \mathbb R^{2d}\).
For a collection of functions \((h_\iota)_{\iota \in I}\) we say
\(h_\iota \in \mathcal A_+\) \emph{uniformly} for \(\iota \in I\)
or \((h_\iota)_{\iota \in I} \subset \mathcal A_+\) \emph{uniformly},
if there exists a constant \(C\) such that
the previous bounds holds for every \(h_\iota\), \(\iota \in I\).
\end{defn}

\begin{prop}[Density of \(\mathcal A_+\)]
\label{prop:std-alg-density}
Assume \(F\) satisfies \cref{eq:lip,eq:space-higher}
and there exists a measure \(m_\infty\) satisfying \cref{eq:mf-invariant-measure}
and having finite exponential moments.
Then for every \(m \in \mathcal P_2(\mathbb R^d)\) with finite entropy and finite Fisher information,
there exists a sequence of measures \((m_n)_{n \in \mathbb N}\) such that
\(m_n / m_\infty \in \mathcal A_+\)
and
\[
W_2 (m_n, m) \to 0,\quad
H(m_n) \to H(m),\quad
I(m_n) \to I(m)
\]
when \(n \to +\infty\).
\end{prop}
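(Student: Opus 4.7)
The plan is to build $m_n$ by truncating and then mollifying the Radon--Nikod\'ym derivative $g := dm/dm_\infty$. Since $m$ has finite Fisher information, it admits a Lebesgue density (by the footnote in \cref{sec:notations}), and $m_\infty$ has a strictly positive smooth density by \cref{eq:mf-invariant-measure}; hence $g \geq 0$ is well defined.

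Concretely, for each integer $n \geq 1$ I would set $\phi_n := (g \wedge n) \vee n^{-1}$, pick a Gaussian mollifier $\eta_\epsilon$ of variance $\epsilon^2$ on $\mathbb{R}^{2d}$, and form $\phi_{n,\epsilon} := \phi_n \star \eta_\epsilon$. Gaussian convolution preserves pointwise bounds on a bounded function, so $n^{-1} \leq \phi_{n,\epsilon} \leq n$ everywhere; moreover $\phi_{n,\epsilon}$ is $C^\infty$ with $\|\nabla^k \phi_{n,\epsilon}\|_\infty \leq n\|\nabla^k \eta_\epsilon\|_{L^1}$ for every $k$. Normalising by $Z_{n,\epsilon} := \int \phi_{n,\epsilon} \, dm_\infty$ and setting $h_{n,\epsilon} := \phi_{n,\epsilon}/Z_{n,\epsilon}$, $m_{n,\epsilon} := h_{n,\epsilon} m_\infty$, the membership $h_{n,\epsilon} \in \mathcal{A}_+$ is then immediate: $|\log h_{n,\epsilon}|$ and $|\nabla^k h_{n,\epsilon}|$ for $k = 1, 2, 3, 4$ are all bounded by constants (depending on $n$ and $\epsilon$), which trivially satisfies the linear- and exponential-growth conditions of \cref{defn:std-alg}.

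For the convergence I would proceed in two stages and conclude by diagonal extraction. First, fix $n$ and let $\epsilon \to 0$: standard properties of mollification give $\phi_{n,\epsilon} \to \phi_n$ in $L^1(m_\infty)$ and a.e., yielding weak convergence of $m_{n,\epsilon}$ to $\tilde m_n := (\phi_n / Z_n) m_\infty$ together with convergence of second moments (dominated by $|z|^2(g + 1)m_\infty$, integrable thanks to the exponential moments of $m_\infty$), hence $W_2$ convergence; convolution with a Gaussian does not increase either $H$ or $I$ of a probability measure, and combined with lower semicontinuity this gives convergence of $H$ and $I$ as well. Then let $n \to \infty$: $\phi_n \to g$ $m_\infty$-a.e.\ and $Z_n \to 1$; the key observation is $\nabla\phi_n/\phi_n = \mathbf{1}_{A_n} \nabla g/g$ on $A_n := \{n^{-1} < g < n\}$, so the Fisher information of $\tilde m_n$ decomposes as a main term $Z_n^{-1}\int_{A_n} |\nabla m/m|^2 \, dm$ plus a remainder $\int_{A_n^c} |\nabla m_\infty / m_\infty|^2 \, d\tilde m_n$; the main term tends to $I(m)$ by dominated convergence, and the remainder vanishes using that $|\nabla m_\infty/m_\infty|^2$ is integrable against both $m$ (using the finite second moment of $m$ together with the at-most-linear growth of $\nabla U$ given by \cref{eq:lip}) and $m_\infty$. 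An analogous split, combined with the absolute continuity of $\int g \log g \, dm_\infty$ on $\{g \geq n\}$, gives $H(\tilde m_n) \to H(m)$. A diagonal sequence $m_n := m_{n, \epsilon_n}$ with $\epsilon_n \to 0$ sufficiently fast then has all three desired convergences.

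The main obstacle is the truncation stage, specifically the upper-bound halves of the $H$ and $I$ convergences. Lower semicontinuity handles one direction essentially for free, but the reverse inequality is where the specific decomposition above becomes essential: one must exploit that $\nabla \phi_n$ vanishes outside $A_n$ and that the tail contributions are controlled by $\int_{\{g \geq n\}} g \log g \, dm_\infty$ and $\int_{\{g \geq n\}} |\nabla m_\infty/m_\infty|^2 \, dm$, both of which tend to zero by the integrability observations above. The $W_2$ stage, the mollification stage, and the diagonal extraction are standard once these bounds are in place.
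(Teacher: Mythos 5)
Your strategy — truncate the relative density, then mollify, then normalize — is the same shape as the paper's argument, and the truncation stage ($n\to\infty$) is handled correctly: the decomposition of $I(\tilde m_n)$ into a main term on $A_n$ and a remainder on $A_n^c$, and the integrability of $|\nabla\log m_\infty|^2$ against both $m$ and $m_\infty$ via \cref{eq:lip}, are all sound. You also get the $\mathcal A_+$ membership essentially for free, which is cleaner than what the paper does.

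However, there is a genuine gap in the mollification stage. You set $m_{n,\epsilon} := (\phi_n\star\eta_\epsilon)\, m_\infty / Z_{n,\epsilon}$, i.e.\ you convolve the \emph{relative density} $\phi_n$ with the kernel, but you then invoke the fact that ``convolution with a Gaussian does not increase either $H$ or $I$ of a probability measure.'' That fact applies to $\tilde m_n\star\eta_\epsilon = (\phi_n m_\infty)\star\eta_\epsilon / Z_n$, which is a \emph{different} measure from your $m_{n,\epsilon}$; in general $(\phi_n\star\eta_\epsilon)\,m_\infty \neq (\phi_n m_\infty)\star\eta_\epsilon$. So neither $H(m_{n,\epsilon})\leq H(\tilde m_n)$ nor $I(m_{n,\epsilon})\leq I(\tilde m_n)$ is available, and the sandwich with lower semicontinuity does not close. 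For the entropy one could salvage the step by a direct dominated-convergence argument, since $h_{n,\epsilon}$ and $h_{n,\epsilon}\log h_{n,\epsilon}$ are uniformly bounded; but for the Fisher information the integrand involves $|\nabla\phi_{n,\epsilon}|^2/\phi_{n,\epsilon}$, and although $\phi_{n,\epsilon}\geq n^{-1}$, the gradient $\nabla\phi_n = \nabla g\,\mathbf 1_{A_n}$ is only controlled in $L^2(m_\infty\,dz)$, not in $L^2(dz)$ — so pointwise and $L^2$-convolution convergence of $\nabla\phi_{n,\epsilon}$ to $\nabla\phi_n$ against the weight $m_\infty$ is not automatic, and no dominating function is supplied.

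The paper's proof sidesteps this by mollifying the \emph{measure} $m'_{n_0}$ directly: $m''_k := m'_{n_0}\star\eta_k$. Then convexity of $H$ and $I$ gives the upper bound, lower semicontinuity (\cref{lem:lower-semi-cont}) gives the lower bound, and the two meet in the limit. The price paid is that verifying $m''_k/m_\infty\in\mathcal A_+$ is no longer trivial: one must use compactly supported mollifiers and the bound $|\nabla\log m_\infty(z)|\leq C(1+|z|)$ (implied by \cref{eq:lip,eq:space-higher}) to control the ratio $\sup_{B(z,1)}m_\infty / m_\infty(z)$ by $\exp(C(1+|z|))$. A Gaussian mollifier would not work in that step, because its unbounded support makes the ratio $(m_\infty\star\eta_\epsilon)/m_\infty$ uncontrollable at infinity. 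In short: your truncation stage and your $\mathcal A_+$ verification are correct, but you should switch to mollifying the measure rather than the density (and use a compactly supported mollifier), then redo the $\mathcal A_+$ check along the paper's lines, or else provide a dominated-convergence proof of $I(m_{n,\epsilon})\to I(\tilde m_n)$ that does not invoke the monotonicity of $I$ under convolution.
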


\begin{proof}
Let \(\varepsilon\) be arbitrary positive real.
Put \(h = m / m_\infty\).
Define \(h'_n = (h \wedge n) \vee \frac 1n\)
and the associated probability measure
\(m'_n = h'_n m_\infty \big/ \!\int h'_n m_\infty\).
Let \(N \in \mathbb N\) be big enough so that \(\int h'_N m_\infty > 0\).
Note that
\[
\sup_{n \geq N} |x|^2 m'_n(x) \leq \frac{|x|^2 m}{\int h'_N m_\infty},
\]
that is to say, the second moments of \((m'_n)_{n\in\mathbb N}\) are uniformly bounded.
Together with the fact that the density of \(m'_n\) converges to that of \(m\) pointwise,
we have \(m'_n \to m\) in \(\mathcal P_2\).
By the dominated convergence theorem, the sequence of measures \(m'_n\) satisfies
\begin{align*}
H(m'_n)
= \frac{\int \log (h'_n m_\infty) h'_n m_\infty}{\int h'_n m_\infty} - \log \int h_n' m_\infty \to \int m \log m, \quad\text{when \(n \to +\infty\).}
\end{align*}
Moreover,
we have the convergence of Fisher information as
\[
\int \frac{|\nabla (h'_n m_\infty)|^2}{h'_n m_\infty}
= \int \biggl[ \frac{|\nabla m_\infty|^2 h'_n}{m_\infty}
+ \biggl( 2 \frac{\nabla h \cdot \nabla m_\infty}{h m_\infty}
+ \frac{|\nabla h|^2 m_\infty}{h} \biggr)
\mathbf 1_{1/n \leq h \leq n} \biggr]
\]
converges to \(I(m)\) when \(n \to +\infty\),
where we used the fact that the weak derivatives satisfy
\(\nabla h'_n = \nabla h \mathbf 1_{1/n \leq h \leq n}\).
Hence we may choose \(n_0 \in \mathbb N\) such that
\[
W_2(m_{n_0}, m) + \bigl|H(m_{n_0}) - H(m)\bigr|
+ \lvert I(m_{n_0}) - I(m)\rvert \leq \frac{\varepsilon}{2}.
\]
Now set \(m''_n = m'_{n_0} \star \eta_{n}\),
where \((\eta_n)_{n \in \mathbb N}\) is a sequence of smooth mollifiers supported in the unit ball.
We have \(m''_n \to m'_{n_0}\) in \(\mathcal P_2\).
By the convexity of entropy and Fisher information we have
\(H(m''_n) \leq H\bigl(m'_{n_0}\bigr)\)
and \(I(m''_n) \leq I\bigl(m'_{n_0}\bigr)\),
and by the lower semicontinuities in \cref{lem:lower-semi-cont} we have
\(\liminf_{n \to +\infty} H(m''_n) \geq H\bigl(m'_{n_0}\bigr)\)
and
\(\liminf_{n \to +\infty} I(m''_n) \geq I\bigl(m'_{n_0}\bigr)\).
Hence,
\[
W_2 \bigl(m''_n, m'_{n_0}\bigr)
+ \bigl|H\bigl(m''_n\bigr) - H\bigl(m'_{n_0}\bigr)\bigr|
+ \bigl|I\bigl(m''_n\bigr) - I\bigl(m'_{n_0}\bigr)\bigr| \to 0
\]
when \(n \to +\infty\).
So we pick another \(n_1 \in \mathbb N\) such that
\(W_2\bigl(m''_{n_1}, m'_{n_0}\bigr)
+ \bigl|H\bigl(m''_{n_1}\bigr) - H\bigl(m'_{n_0}\bigr)\bigr|
+ \bigl|I\bigl(m''_{n_1}\bigr) - I\bigl(m'_{n_0}\bigr)\bigr|
< \varepsilon/2\).

It remains to verify that \(m''_{n_1} \big/ m_\infty\) belongs to \(\mathcal A_+\).
By the definition we have
\[
\frac{m''_{n_1}}{m_\infty} = \frac{(h'' m_\infty) \star \rho_{n_1}}{m_\infty}
\]
for some \(h''\) with \(0 < \inf h'' \leq \sup h'' < +\infty\).
Hence for every \(z \in \mathbb R^{2d}\),
\[
\inf h'' \frac{\inf_{B(z,1)} m_\infty}{m_\infty(z)}
\leq \frac{m''_{n_1}(z)}{m_\infty(z)} \leq \sup h'' \frac{\sup_{B(z,1)} m_\infty}{m_\infty(z)}.
\]
On the other hand, the gradient of \(m_\infty\) satisfies
\(\lvert\nabla \log m_\infty(z)\rvert
\leq \lvert D_m F(m_\infty,x)\rvert + |v|\)
for every \(z = (x,v) \in \mathbb R^{2d}\).
In particular, we have
\[
\exp\bigl(-C(1+|x| + |v|)\bigr) \leq
\frac{\inf_{B(z,1)} m_\infty}{m_\infty(z)} \leq
\frac{\sup_{B(z,1)} m_\infty}{m_\infty(z)}
\leq \exp\bigl(C(1+|x| + |v|)\bigr),
\]
for some constant \(C\).
Therefore, \(m''_{n_1} \big/ m_\infty\) verifies the first condition of \(\mathcal A_+\).

Now verify the conditions on the derivatives.
The derivatives read
\[
\nabla^k \biggl( \frac{m''_{n_1}}{m_\infty} \biggr)
= \sum_{j=0}^k \binom kj \frac{(h'' m_\infty) \star \nabla^j \rho_{n_1}}{m_\infty}
\cdot m_\infty \nabla^{k-j} \bigl(m_\infty^{-1}\bigr).
\]
For each term in the sum, we can bound its first part by
\[
\biggl|\frac{(h'' m_\infty) \star \nabla^j \rho_{n_1} (z)}{m_\infty(z)}\biggr|
\leq \exp\bigl(C(1+|z|)\bigr),
\]
using the same method that we used to verify
the first condition of \(\mathcal A_+\).
Moreover, since our assumptions \cref{eq:lip,eq:space-higher} imply
\[
\lvert\nabla \log m_\infty (z)\rvert \leq C\bigl(1 + |z|\bigr)\qquad\text{and}\qquad
\bigl|\nabla^k \log m_\infty(z)\bigr| \leq C\quad
\text{for $k = 2$, $3$, $4$,}
\]
the second part of each term of the sum,
\(m_\infty \nabla^{k-j} \bigl(m_\infty^{-1}\bigr)\), is of polynomial growth.
The proof is then complete.
\end{proof}

Then we show the stability of the set \(\mathcal A_+\)
under the mean field flow.
This property will be used to justify the computations in the proof
of \cref{thm:mf-entropy-convergence},
as is usual in the analysis of PDE.

\begin{prop}[Stability of \(\mathcal A_+\) under flow]
\label{prop:std-alg-stability}
Assume that \(F\) satisfies \cref{eq:lip,eq:space-higher}
and that there exists a measure \(m_\infty\) satisfying \cref{eq:mf-invariant-measure}
and having finite exponential moments.
Let \((m_t)_{t \in [0,T]} \in C([0,T]; \mathcal P_2(\mathbb R^d))\) be a solution
in the sense of distributions
to the mean field Fokker--Planck equation \cref{eq:mf-fp}.
If \(m_0 / m_\infty \in \mathcal A_+\),
then \(m_t / m_\infty \in \mathcal A_+\) uniformly for \(t \in [0,T]\).
In particular, \(m_t\) is a classical solution to  the Fokker--Planck equation.
\end{prop}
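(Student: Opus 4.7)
The plan is to derive the linear evolution equation satisfied by $h_t := m_t/m_\infty$, apply a Feynman--Kac representation to obtain a probabilistic formula for $h_t$, and then bound $h_t$ and its spatial derivatives up to order four using moment estimates on an auxiliary diffusion together with its tangent flow.

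First, write $U_t(x) := \frac{\delta F}{\delta m}(m^x_t, x)$, so that $\nabla U_t = D_m F(m^x_t,\cdot)$ and $m_\infty \propto \exp(-U_\infty(x) - \tfrac12 |v|^2)$. Substituting $m_t = h_t m_\infty$ into \cref{eq:mf-fp} and using that $m_\infty$ is invariant for the kinetic Langevin with potential $U_\infty$, the computation simplifies to the linear inhomogeneous kinetic equation
\[
\partial_t h_t = \Delta_v h_t - v \cdot \nabla_v h_t + \nabla U_t(x) \cdot \nabla_v h_t - v \cdot \nabla_x h_t + c_t(x,v)\,h_t,
\]
where $c_t(x,v) := v \cdot (\nabla U_\infty(x) - \nabla U_t(x))$. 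By the Lipschitz-in-measure assumption \cref{eq:lip} one has $|\nabla U_t(x) - \nabla U_\infty(x)| \leq M^F_{mm} W_1(m^x_t, m^x_\infty)$ uniformly in $x$, and a standard second-moment estimate on \cref{eq:mf-sde} bounds this quantity uniformly for $t \in [0,T]$. Hence $|c_t(x,v)| \leq C(1+|v|)$, which is linear in $|z|$.

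Second, I would introduce on $[0,t]$ the time-inhomogeneous diffusion $Z^{(t,z)}_s = (\tilde X_s, \tilde V_s)$ started at $z$ and defined by
\[
d\tilde X_s = -\tilde V_s\, ds, \qquad d\tilde V_s = (-\tilde V_s + \nabla U_{t-s}(\tilde X_s))\, ds + \sqrt{2}\, dB_s.
\]
Its generator at time $s$ is the first- and second-order part of the PDE above with $\nabla U_t$ replaced by $\nabla U_{t-s}$. After converting the forward PDE to a backward one by the change of time $s \mapsto t-s$, the Feynman--Kac formula yields
\[
h_t(z) = \Expect\!\left[\, h_0\bigl(Z^{(t,z)}_t\bigr) \exp\!\left( \int_0^t c_{t-s}\bigl(Z^{(t,z)}_s\bigr)\, ds \right) \right].
\]
The drift of this SDE is affine in $(x,v)$ with constants depending only on $T$ and $M^F_{mx}$, so Gr\"onwall and Gaussian tail estimates give $\Expect[\exp(\alpha \sup_{s \in [0,T]} |Z^{(t,z)}_s|)] \leq \exp(C_\alpha(1 + |z|))$ for every $\alpha \geq 0$. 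Combined with the hypothesis $|\log h_0| \leq C(1+|z|)$ and the linear bound on $c_s$, the upper bound $h_t(z) \leq \exp(C'(1+|z|))$ is immediate, and Jensen's inequality applied to the same representation yields the matching lower bound $h_t(z) \geq \exp(-C'(1+|z|))$.

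Third, for the derivative bounds $|\nabla^k h_t(z)| \leq \exp(C(1+|z|))$, $k=1,\ldots,4$, I would differentiate the Feynman--Kac representation in $z$. This produces terms involving $\nabla^k h_0$ evaluated at $Z^{(t,z)}_t$, the iterated tangent flows $\nabla^j_z Z^{(t,z)}_s$ for $j \leq 4$, and spatial derivatives of $c_{t-s}$ and of the drift $\nabla U_{t-s}$. The auxiliary assumption \cref{eq:space-higher} gives uniform bounds on $\nabla^k(\nabla U_s)$ for $k = 1, 2, 3$ and consequently on $\nabla^k c_s$ as well, so each tangent flow $\nabla^j_z Z^{(t,z)}_s$ satisfies a linear SDE with bounded coefficients and therefore has exponential moments uniformly bounded in $z$. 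Applying the product and chain rules then yields the required exponential-in-$|z|$ bounds on $|\nabla^k h_t(z)|$, uniformly for $t \in [0,T]$, completing the verification that $h_t \in \mathcal A_+$. The main obstacle is precisely this multistep derivative bookkeeping: each new derivative contributes factors that grow only polynomially in $(Z,\nabla Z,\ldots)$, but one must carefully verify that the exponential moment estimates absorb these polynomial factors without degrading the linear-in-$|z|$ exponent demanded by $\mathcal A_+$. The final claim that $m_t = h_t m_\infty$ is a classical solution of \cref{eq:mf-fp} then follows from the resulting $C^2$-regularity in $z$ and continuity in $t$.
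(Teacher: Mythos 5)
Your proposal follows essentially the same route as the paper: derive the linear kinetic PDE for $h_t = m_t/m_\infty$ with the zeroth-order coefficient $c_t(x,v) = v\cdot(\nabla U_\infty - \nabla U_t)$, represent $h_t$ by Feynman--Kac along the time-reversed auxiliary diffusion, and then control $\log h_t$ and $\nabla^k h_t$ ($k\leq 4$) via exponential moment bounds on $\sup_s |Z^{(t,z)}_s|$ and almost-sure bounds on the tangent flows $\nabla^j_z Z^{(t,z)}_s$. The only minor slips are notational (you need $\nabla^k D_m F$ bounded for $k=1,\dots,4$, with $k=1$ coming from \cref{eq:lip} and $k=2,3,4$ from \cref{eq:space-higher}, slightly more than the "$k=1,2,3$" you wrote), and do not affect the argument.
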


\begin{proof}
In the following \(C\) will denote a constant depending on \(M^F_{mx}\),
\(M^F_{mm}\), the initial value \(h_0 = h(0,\cdot) \coloneqq m_0 / m_\infty\),
the time interval \(T\) and the bounds on the higher-order derivatives
\(\max_{k=2,3,4}\sup_{m,x} \bigl|\nabla^k D_mF(m,x)\bigr|\),
and it may change from line to line.
For a given quantity \(Q\),
we denote by \(C_Q\) a constant depending additionally on \(Q\).

Denote \(b_t(x) = - D_m F(m_t,x)\) and \(b_\infty(x) = - D_m F(m_\infty,x)\).
We also define \(h_t (x) = m_t(x) / m_\infty(x)\).
The relative density solves
\begin{equation}
\label{eq:h}
\partial_t h = \Delta_v h - v \cdot \nabla_v h - v \cdot \nabla_x h - b_t \cdot \nabla_v h + (b_t - b_\infty) \cdot v h.
\end{equation}
Fix \(t \in [0,T]\).
We construct
for every \(z = (x,v) \in \mathbb R^{2d}\),
the stochastic process \(Z^{t,z}_s = (X^{t,z}, V^{t,z})\), solving
\begin{align*}
dX^{t,z}_s &= - V^{t,z}_s ds, \\
dV^{t,z}_s &= - V^{t,z}_s ds - b_{t-s}\bigl(X^{t,z}_s\bigr) ds + \sqrt 2 dW_s
\end{align*}
for \(s \in [0,t]\)
with the initial value \(X^{t,z}_0 = x, V^{t,z}_0 = v\)
and the \emph{same} Brownian motion \((W_s)_{s \in [0,t]}\).

\proofstep{Regularity of \(Z^{t,z}_s\)}
Set \(M^{t,z} = \sup_{s \in [0,t]}\bigl| Z^{t,z}_s\bigr|\).
By Itō's formula and Doob's maximal inequality, the processes satisfy
for every \(\alpha \geq 0\),
\begin{equation}
\label{eq:M-exponential-moments}
\Expect \bigl[\exp (\alpha M^{t,z})\bigr]
\leq \exp \bigl(C_\alpha (1 + |z|)\bigr).
\end{equation}
Thanks to the assumption on the uniform boundedness of the higher-order derivatives
\cref{eq:space-higher},
the mapping \(z \mapsto Z^{t,z}_s\) is \(C^4\) and the partial derivatives solve the Cauchy--Lipschitz SDEs
for \(k = 1\), \(2\), \(3\), \(4\):
\begin{align*}
d \nabla^k X^{t,z}_s &= - \nabla^k V^{t,z}_s ds, \\
d \nabla^k V^{t,z}_s &= - \nabla^k V^{t,z}_s ds
- \sum_{j=1}^k \nabla^j b_{t-s} \bigl(X^{t,z}_s\bigr)
B_{k,j}\bigl(\nabla X^{t,z}_s,\ldots,\nabla^{k-j+1} X^{t,z}_s\bigr) ds,
\end{align*}
where \(B_{k,j}\) is a \(k - j + 1\)-variate polynomial
and in particular \(B_{k,1}(x_1,\ldots,x_k) = x_k\).
The initial values of the SDEs read
\[
\nabla Z^{t,z}_0 = \Id\qquad\text{and}\qquad \nabla^k Z^{t,z}_0 = 0\quad
\text{for $k=2$, $3$, $4$.}
\]
By induction we can obtain the almost sure bound
\begin{equation}
\label{eq:Z-bound}
\max_{k = 1,2,3,4} \sup_{s \in [0,t]}\bigl|\nabla^k Z^{t,z}_s\bigr| \leq C.
\end{equation}

\proofstep{Regularity of \(h\) by Feynman--Kac}
Denote \(g(t,z) = g(t,x,v) = \bigl(b_t(x) - b_\infty(x)\bigr) \cdot v\).
It satisfies
\[
\bigl|g(t,z)\bigr| \leq M^F_{mm} W_2(m_t, m_\infty) |v| \leq M^F_{mm} \sup_{t \in [0,T] } W_2(m_t, m_\infty) |v| = C|v|
\]
and also \(\bigl|\nabla^k g(t,z)\bigr| \leq C\bigl(1+|z|\bigr)\)
for \(k = 1\), \(2\), \(3\), \(4\).
The Feynman--Kac formula for the parabolic equation \cref{eq:h} reads
\begin{equation}
\label{eq:h-feynman-kac}
h(t,z) = \Expect \biggl[ \exp \biggl( \int_0^t
g\bigl(t-s,Z^{t,z}_s\bigr) ds \biggr)
h\bigl(0,Z^{t,z}_t\bigr) \biggr].
\end{equation}
Using the method in the proof of \cite[Proposition 4.12]{ulpoc}, we can prove
\[
\lvert \log h(t,z)\rvert \leq C(1+|z|).
\]
Moreover, thanks to the estimates \cref{eq:Z-bound},
we can apply the dominated convergence theorem to
the Feynman--Kac formula \cref{eq:h-feynman-kac}
and obtain that \(z \mapsto h(t,z)\) belongs to \(C^4\)
with partial derivatives
\begin{multline*}
\nabla^k h(t,z) = \sum_{j=0}^k \Expect \biggl[
\exp \biggl( \int_0^t g\bigl(t-s,Z^{t,z}_s\bigr) ds \biggr)
P_j \biggl( \int_0^t \nabla_z g\bigl(t-s, Z^{t,z}_s\bigr) ds,
\ldots, \\
\int_0^t \nabla_z^j g\bigl(t-s, Z^{t,z}_s\bigr) ds \biggr)
\nabla^{k-j}_z h\bigl(0,Z^{t,z}_t\bigr)
\biggr],
\end{multline*}
where \(P_j\) is a \(j\)-variate polynomial.
Note that
\begin{align*}
\nabla^{k}_z f(Z^{t,z}_s) &=
\sum_{\ell = 1}^{k} \nabla^{\ell} f\bigl(Z^{t,z}_s\bigr)
B_{k,\ell}\bigl(\nabla Z^{t,z}_s, \ldots, \nabla^{k-\ell+1} Z^{t,z}_s\bigr)
\end{align*}
holds for \(f = g(t-s,\cdot), s \in [0,t]\) and for \(f = h(0,\cdot)\).
We apply the bounds on \(\bigl|\nabla^k g\bigr|, \bigl|\nabla^k h\bigr|\)
for \(k=0\), \(1\), \(2\), \(3\), \(4\)
and the exponential moment bound \cref{eq:M-exponential-moments}
to obtain that
\(\bigl|\nabla^k h(t,z)\bigr| \leq \exp \bigl(C(1+|z|)\bigr)\)
for \(k = 1\), \(2\), \(3\), \(4\).
Finally, the derivatives \(\nabla h, \nabla^2 h\) exist
and one can show that they are continuous in time
by differentiating \cref{eq:h} twice in space.
So again by the equation \cref{eq:h} we have \(\partial_t h\) is continuous
and therefore exists classically.
Thus \(m_t\) is a classical solution to  the Fokker--Planck equation \cref{eq:mf-fp}.
\end{proof}

\begin{rem}
The polynomials appearing in the previous proof
belong to the \emph{non-commutative} free algebras over \(\mathbb R\)
of respective number of indeterminates
instead of the usual polynomial rings,
as the tensor product is not commutative.
\end{rem}

After the technical preparations we prove \cref{thm:mf-entropy-convergence}.

\begin{proof}[Proof of \cref{thm:mf-entropy-convergence}]
Suppose first that
the mean field functional \(F\) satisfies additionally \cref{eq:space-higher}
and the initial value of the dynamics is such that \(m_0 / m_\infty\)
belongs to \(\mathcal A_+\),
which is the standard algebra defined in \cref{defn:std-alg}.
According to \cref{prop:std-alg-stability},
the measure \(m_t\) belongs to \(\mathcal A_+\) uniformly in \(t\),
for every \(T > 0\).
Since we have that \(z \mapsto \hat m_t(z) / m_\infty(z)\) is \(C^4\) with
\begin{align}
\sup_{z \in \mathbb R^{2d}}
\Bigl|\nabla \log \frac{\hat m_t}{m_\infty} (z)\Bigr|
&\leq M^F_{mm}W_2(m_t,m_\infty) \nonumber \\
\intertext{and}
\max_{k=2, 3, 4}\sup_{z \in \mathbb R^{2d}}
\Bigl|\nabla^k \log \frac{\hat m_t}{m_\infty} (z) \Bigr| &\leq M, \nonumber \\
\intertext{for some constant \(M\),
the alternative relative density \(\eta_t(z) \coloneqq m_t(z) / \hat m_t(z)\)
is \(C^4\) in \(z\) and there exists a constant \(M_T\) such that}
\eta_t(z) + \frac{1}{\eta_t(z)}
+ \sum_{k=1}^4 \bigl|\nabla^k \eta_t(z)\bigr|
&\leq \exp \bigl(M_T (1 + |z|)\bigr)
\label{eq:eta-bound}
\end{align}
for every \((t,z) \in [0,T] \times \mathbb R^{2d}\).
The constant \(M_T\) may change from line to line in the following.

In the following we will adopt the abstract notations introduced
by Villani in his seminal work on the hypocoercivity \cite{hypocoer}.
Define \(\mathcal H_t = L^2(\hat m_t)\), \(A_t = \nabla_v\)
and \(B_t = v \cdot \nabla_x - D_m F(m_t, x) \cdot \nabla_v\).
The adjoint of \(A_t\) in \(\mathcal H_t\) is therefore
\(A^*_t = -\nabla_v + v\),
while \(B_t\) is antisymmetric: \(B^*_t = - B_t\).
Define the commutator \(C_t = [A_t,B_t] = A_tB_t - B_tA_t = \nabla_x\).
Finally define \(L_t = A^*_tA_t + B_t\) and \(u_t = \log \eta_t\).
The Fokker--Planck equation \cref{eq:mf-fp} now reads
\begin{equation}
\label{eq:eta-dynamics}
\frac{\partial_t m_t}{\hat m_t} = - L_t \eta_t = - (A^*_t A_t + B_t) \eta_t.
\end{equation}

\proofstep{Adding anisotropic Fisher}
Let \(a,b,c\) be positive reals to be determined.
We define the hypocoercive Lyapunov functional
\begin{multline}
\label{eq:mf-hypocoer-functional}
\mathcal E(m) = \mathcal F(m)
+ a \int \Bigl|\nabla_v \log \frac{m}{\hat m} (z)\Bigr|^2 m(dz)\\
+ 2b \int \nabla_v \log \frac{m}{\hat m} (z) \cdot \nabla_x \log \frac{m}{\hat m}(z) m(dz)
+ c \int \Bigl|\nabla_x \log \frac{m}{\hat m} (z)\Bigr|^2 m(dz),
\end{multline}
where \(\mathcal F(m) = F(m) + \frac 12 \int |v|^2 m + H(m)\)
is the free energy.
We also denote the sum of the last three terms in \cref{eq:mf-hypocoer-functional} by
\(I_{a,b,c} (m_t | \hat m_t)\), so that
\[\mathcal E(m) = \mathcal F(m) + I_{a,b,c} (m_t | \hat m_t).\]
Thanks to \cref{prop:std-alg-stability} and in particular
the bound \cref{eq:eta-bound},
we can show that the quantity \(\mathcal E(m_t)\) is well defined
for every \(t \geq 0\) and is continuous in \(t\).
We will show in the following
that \(t \mapsto \mathcal E(m_t)\) is in fact absolutely continuous
and calculate its almost everywhere derivative.
To this end, for every \(t > 0\) and every \(h \geq -t\), we define
\begin{align*}
\mathcal E(m_{t+h}) - \mathcal E(m_t)
&= \bigl( \mathcal F(m_{t+h}) - \mathcal F(m_t) \bigr) \\
&\quad+ \bigl( I_{a,b,c} (m_{t+h} | \hat m_{t+h}) - I_{a,b,c} (m_t | \hat m_{t+h}) \bigr) \\
&\quad+ \bigl( I_{a,b,c} (m_t | \hat m_{t+h}) - I_{a,b,c} (m_t | \hat m_t) \bigr) \\
&\eqqcolon \Delta_1 + \Delta_2 + \Delta_3.
\end{align*}

\proofstep{Contributions from \(\Delta_1\) and \(\Delta_2\)}
We first calculate the contributions from \(\Delta_1\).
Using the Fokker--Planck equation \cref{eq:mf-fp}
and the bounds \cref{eq:eta-bound},
one has \(|\Delta_1| \leq M_T h\) for every \(t\), \(h\)
such that \(t\) and \(t+h\) belong to \([0,T]\);
moreover, by the dominated convergence theorem
one has for almost every \(t > 0\),
\[
\lim_{h \to 0} \frac{\Delta_1}{h}
= \frac{d\mathcal F(m_t)}{dt}
= - \int \Bigl| \nabla_v \log \frac{m_t}{\hat m_t} (z) \Bigr|^2 m_t(z) dz
= - \int |A_t u_t|^2 m_t,
\]
where the right hand side is continuous in \(t\).
The above inequality then holds for every \(t > 0\).
Define the \(4 \times 4\) matrix
\[
K_1 = \begin{pmatrix}
1 & 0 & 0 & 0 \\
0 & 0 & 0 & 0 \\
0 & 0 & 0 & 0 \\
0 & 0 & 0 & 0
\end{pmatrix},
\]
and denote the Hilbertian norm by \(\Vert \cdot \Vert = \Vert \cdot \Vert_{L^2(m_t)}\).
Introduce the four-dimensional vector
\begin{equation}
\label{eq:Y}
Y_t =
\bigl(\Vert A_tu_t \Vert, \bigl\Vert A^2_tu_t\bigr\Vert,
\Vert C_tu_t\Vert, \Vert C_tA_tu_t\Vert\bigr)^\mathsf{T}.
\end{equation}
Then we have for almost every \(t > 0\),
\(\lim_{h \to 0}\Delta_1/h = - Y_t^\mathsf{T} K_1 Y_t\).

Next calculate the contributions from \(\Delta_2\).
Arguing as we did for \(\Delta_1\),
again we have \(|\Delta_2| \leq M_T h\).
Applying the dominated convergence theorem
and compute as in the proofs of \cite[Lemma 32 and Theorem 18]{hypocoer},
we obtain that
for almost every \(t > 0\),
the limit \(\lim_{h \to 0} \Delta_2/h\) exists
and is upper bounded by \(- Y_t^\mathsf{T} K_2 Y_t \),
where
\[
K_2 \coloneqq \begin{pmatrix}
2a - 2M^F_{mx}b & - 2b & - 2a & 0 \\
0 & 2a & - 2M^F_{mx}c & -4b \\
0 & 0 & 2b & 0 \\
0 & 0 & 0 & 2c
\end{pmatrix}.
\]

\proofstep{Contributions from \(\Delta_3\)}
Now we calculate the last term
\(\Delta_3 \coloneqq I_{a,b,c}(m_t | \hat m_{t+h}) - I_{a,b,c} (m_t | \hat m_t)\).
Note that \(\nabla_v \log \hat m_t(z) = -v\)
and, by the $W_2$-Lipschitz continuity of $m \mapsto D_mF(m,x)$, we have
\begin{multline*}
\bigl|\nabla_x \log \hat m_{t+h}(z)
- \nabla_x \log \hat m_t(z) \bigr|
= \bigl|D_m F\bigl(m^x_{t+h}, x\bigr) - D_m F\bigl(m^x_t, x\bigr)\bigr| \\
\leq M^F_{mm} W_2\bigl(m^x_{t+h}, m^x_t\bigr).
\end{multline*}
So for each $z \in \mathbb R^{2d}$,
we know that \( \nabla \log \hat m_t(z) \) is continuous in $t$,
and is absolutely continuous once
\(t \mapsto m^x_t\) is absolutely continuous with respect to the \(W_2\) distance in the sense of \cite[Definition 1.1.1]{gf}.
Let us show the latter.
Integrating the speed component in the Fokker--Planck equation \cref{eq:mf-fp},
we obtain
\begin{equation}
\label{eq:mf-x-ce}
\partial_t m^x_t + \nabla_x \cdot\bigl(v^x_t m^x_t\bigr) = 0,
\end{equation}
where
\[
v^x_t (x) \coloneqq \frac{\int vm_t(x,v) dv}{\int m_t(x,v) dv}
= \frac{\int \nabla_v \log \frac{m_t}{\hat m_t}(x,v) m_t(x,v) dv}{\int m_t(x,v) dv}
\]
is the average speed at the spatial point \(x\).
The \(L^2\) norm of the vector field in the continuity equation
\cref{eq:mf-x-ce} satisfies
\begin{multline*}
\Vert v^x_t \Vert_{L^2(m^x_t)}
= \Biggl( \int \Biggl|\frac{\int \nabla_v \log \frac{m_t}{\hat m_t}(x,v) m_t(x,v) dv}{\int m_t(x,v) dv}\Biggr|^2 m^x_t(x) dx \Biggr)^{\!1/2} \\
\leq \biggl( \int \Bigl| \nabla_v \log \frac{m_t}{\hat m_t} (z)\Bigr|^2 m_t(dz) \biggr)^{\!1/2}
= \Vert A_t u_t \Vert \leq M_T,
\end{multline*}
where the first inequality is due to Cauchy--Schwarz.
Applying \cite[Proposition 8.3.1]{gf} to the flow \(t \mapsto m^x_t\)
and its continuity equation \cref{eq:mf-x-ce},
and using \cite[Theorem 1.1.2]{gf},
we obtain
\[
W_2\bigl(m^x_{t+h}, m^x_t\bigr)
\leq \int_t^{t+h} \Vert A_s u_s \Vert ds \leq M_T h
\]
for every \(t\), \(h\) such that
\(t\) and \(t + h\) belong to \([0, T]\).
So the mapping \(t \mapsto \nabla \log \hat m_t(z)\)
is absolutely continuous with almost everywhere derivatives satisfying
\begin{align*}
\partial_t \nabla_v \log \hat m_t(z) &= 0, \\
\lvert\partial_t \nabla_x \log \hat m_t (z)\rvert
&\leq M^F_{mm} \Vert A_t u_t \Vert \leq M_T.
\end{align*}
Then we obtain \(|\Delta_3| \leq M_T h\).
Moreover, by the dominated convergence theorem,
we have for almost every \(t > 0\),
\begin{multline*}
\lim_{h \to 0} \frac{|\Delta_3|}{|h|}
\leq 2M^F_{mm}
\int \bigl( \lvert A_t u_t(z)\rvert, \lvert C_t u_t(z)\rvert \bigr)
\begin{pmatrix}
a & b \\
b & c
\end{pmatrix}
\begin{pmatrix}
0 \\
\Vert A_t u_t \Vert
\end{pmatrix}
m_t(dz) \\
\leq 2 M^{F}_{mm} ( b \Vert A_t u_t \Vert \Vert A_t u_t \Vert
+ c \lVert A_t u_t \rVert \lVert C_t u_t \rVert )
= Y_t^\mathsf{T} K_3 Y_t
\end{multline*}
by applying Cauchy--Schwarz again, where
\[
K_3 \coloneqq 2M_{mm}^{F}\begin{pmatrix}
b & 0 & c & 0 \\
0 & 0 & 0 & 0 \\
0 & 0 & 0 & 0 \\
0 & 0 & 0 & 0
\end{pmatrix}.
\]

\proofstep{Hypocoercivity}
Our previous bounds on \(\Delta_k\), \(k = 1\), \(2\), \(3\)
establish that \(t \mapsto \mathcal E(m_t)\) is absolutely continuous
(locally Lipschitz, in fact)
with its almost everywhere derivative satisfying
\(\frac{d}{dt} \mathcal E(m_t) \leq - Y_t^\mathsf{T} K Y_t\),
where \(K\) is defined by \(K_1 + K_2 - K_3\) and is equal to
\[
\begin{pmatrix}
1 + 2M^F_{mm}a - 2\bigl(M^F_{mx} + M^F_{mm}\bigr)b
& - 2b & - 2a - 2M^F_{mm} c & 0 \\
0 & 2a & - 2M^F_{mx}c & -4b \\
0 & 0 & 2b & 0 \\
0 & 0 & 0 & 2c
\end{pmatrix}.
\]
As in the end of the proof of \cite[Theorem 18]{hypocoer},
we can pick constants \(a\), \(b\), \(c > 0\), depending only on \(M^F_{mx}\) and \(M^F_{mm}\),
such that \(ac > b^2\) and the matrix \(K\) is a positive-definite.
Let \(\alpha\) be the smallest eigenvalue of \(K\).
Then we have
\begin{multline*}
\frac{d\mathcal E(m_t)}{dt}
\leq - \alpha \bigl(\Vert A_t u_t \Vert^2 + \Vert C_t u_t \Vert^2 + \Vert A_t^2 u_t \Vert^2 + \Vert C_tA_tu_t\Vert^2\bigr) \\
\leq - \alpha \bigl(\Vert A_t u_t \Vert^2 + \Vert C_t u_t \Vert^2\bigr)
= - \alpha I(m_t | \hat m_t).
\end{multline*}
Hence for every \(t\), \(s\) such that \(t \geq s \geq 0\),
\begin{equation}
\label{eq:mf-hypocoer}
\mathcal E(m_t) \leq \mathcal E(m_s) - \alpha \int_s^t I(m_u | \hat m_u) du.
\end{equation}

\proofstep{Approximation}
We now show that the inequality \cref{eq:mf-hypocoer}
holds without additional assumptions on the mean field functional \(F\)
and the initial value \(m_0\).

First, suppose still that \(F\) satisfies \cref{eq:space-higher}
but no longer suppose \(m_0\) is such that \(m_0 / m_\infty \in \mathcal A_+\).
The initial value \(m_0\) belongs to \(\mathcal P_2(\mathbb R^d)\) and both \(H(m_0)\) and \(I(m_0)\) are finite,
so thanks to \cref{prop:std-alg-density}, we can pick a sequence of measures
\(\bigl(m'_{n,0}\bigr)_{n\in\mathbb N}\),
each of which belongs to \(\mathcal A_+\), such that
\[
\lim_{n\to\infty} W_2\bigl(m'_{n,0},m_0\bigr)
+ \bigl|H\bigl(m'_{n,0}\bigr) - H(m_0)\bigr|
+ \bigl|I\bigl(m'_{n,0}\bigr) - I(m_0)\bigr| = 0.
\]
As proved above, the inequality \cref{eq:mf-hypocoer} holds
for the flow \(\bigl(m'_{n,t}\bigr)_{t \geq 0}\), that is,
\[
\mathcal E\bigl(m'_{n,t}\bigr)
\leq \mathcal E\bigl(m'_{n,0}\bigr)
- \alpha \int_0^t I\bigl(m'_{n,s} \big| \hat m'_{n,s}\bigr) ds.
\]
By the continuity with respect to the initial value of the SDE system \cref{eq:mf-sde},
we have also \(m'_{n,t} \to m_t\) in the weak topology of \(\mathcal P_2\).
We recall in \cref{lem:lower-semi-cont} that both the entropy and the Fisher information are lower semicontinuous with respect to the weak topology of \(\mathcal P_2\).
Taking the lower limit on both sides of the inequality above, we obtain
\cref{eq:mf-hypocoer} with \(s = 0\) for the original flow \((m_t)_{t \geq 0}\).

Second, we no longer require \(F\) to satisfy \cref{eq:space-higher}
and set \(F_k (m) = F (m \star \rho_k)\)
for a sequence of smooth and symmetric mollifiers \((\rho_k)_{k \in \mathbb N}\) in \(\mathbb R^d\) with \(\supp \rho_k \subset B(0,1/k)\).
The linear derivative of the regularized mean field functional reads
\(
\frac{\delta F_k}{\delta m}(m,\cdot) = \frac{\delta F}{\delta m}(m \star \rho_k,\cdot) \star \rho_k
\),
and its intrinsic derivative reads \(D_m F_k(m, \cdot) = D_m F(m \star \rho_k, \cdot) \star \rho_k\).
Consequently,
\begin{multline}
\label{eq:F_k-approx}
\lvert D_m F_k (m',x') - D_m F(m,x)\rvert
\leq M^F_{mm} W_2(m', m) + M^F_{mx} |x' - x| \\
+ \frac{M^F_{mx} + M^F_{mm}}{k}.
\end{multline}
Moreover, \(\nabla D_m F_k(m, \cdot) = \nabla D_m F(m \star \rho_k, \cdot) \star \rho_k\) and
\[
\nabla^k D_m F_k(m, \cdot)
= D_m F(m \star \rho_k, \cdot) \star \nabla^k \rho_k
= \nabla D_m F(m \star \rho_k, \cdot) \star \nabla^{k-1} \rho_k
\]
is continuous for \(k \geq 0\) and bounded for \(k \geq 1\).
In particular \(F_k\) satisfies \cref{eq:space-higher}.
Define \(\mathcal E_k (m) = F_k(m) + \frac 12 \int |v|^2 m + H(m)
+ I_{a,b,c} (m | \hat m)\)
and here \(\hat m\) should be understood as the Gibbs-type measure
defined with \(F_k\) instead of \(F\).
Let \((m''_{k,t})_{t \geq 0}\) be the flow of measures driven by \(F_k\)
with the initial value \(m''_{k,0} = m_0\).
Our previous result yields for every \(t \geq 0\),
\[
\mathcal E_k\bigl(m''_{k,t}\bigr)
\leq \mathcal E_k(m_{0})
- \alpha \int_0^t I\bigl(m''_{k,s} \big| \hat m''_{k,s}\bigr) ds,
\]
where \(\hat m''_{k,s}\) is the probability measure proportional to
\[
\exp \biggl( - \frac{\delta F_k}{\delta m}\bigl(m''_{k,s}, x\bigr)
- \frac 12 |v|^2 \biggr) dxdv.
\]
From the bound \cref{eq:F_k-approx} we deduce that \(m''_{k,t} \to m_t\)
in \(\mathcal P_2\) for every \(t \geq 0\) by the synchronous coupling result in \cref{lem:synchronous-coupling}.
So taking the lower limit on both sides of the previous inequality,
we obtain the inequality \cref{eq:mf-hypocoer} with \(s = 0\)
holds for general initial values and general mean field functionals.
In particular, for every \(t \geq 0\), the measure \(m_t\) has finite entropy
and finite Fisher information.
Then we apply the same argument to the flow with the initial value \(m_s\)
and obtain the inequality \cref{eq:mf-hypocoer} for general \(s \geq 0\).

\proofstep{Conclusion}
Define the matrix
\[
S = \begin{pmatrix}
a & b \\
b & c
\end{pmatrix}
\]
and denote by \(|S|\) its largest eigenvalue.
The Fisher information satisfies for every \(t \geq 0\),
\begin{align*}
I(m_t | \hat m_t) &= \frac 12 I(m_t | \hat m_t) + \frac 12 I(m_t | \hat m_t) \\
&\geq 2 \rho H(m_t | \hat m_t)
+ \frac{1}{2} I(m_t | \hat m_t) \\
&\geq 2 \rho \bigl(\mathcal F(m_t) - \mathcal F(m_\infty)\bigr)
+ \frac{1}{2 |S|} I_{a,b,c}(m_t | \hat m_t) \\
&\geq \biggl( 2 \rho \wedge \frac{1}{2|S|} \biggr)
\bigl(\mathcal E(m_t) - \mathcal E(m_\infty)\bigr),
\end{align*}
where on the second line we applied the uniform LSI \cref{eq:lsi},
with \(\rho\) defined by \cref{eq:defn-rho},
and on the third line we used \cref{lem:mf-entropy-sandwich},
\(m_\infty = \hat m_\infty\) and \(S \preceq \lambda_2\).
Applying Grönwall's lemma%
\footnote{The mapping \(t \mapsto \mathcal E(m_t)\) is lower semicontinuous
by \cref{lem:lower-semi-cont}
and non-increasing by the inequality \cref{eq:mf-hypocoer}.
So it is càdlàg.
It then suffices to convolute the mapping \(t \mapsto \mathcal E(m_t)\)
by a sequence of mollifiers compactly supported in \((0,1)\),
apply the classical Grönwall's lemma
and take the limit.
} to \cref{eq:mf-hypocoer},
we obtain the desired contractivity \cref{eq:mf-entropy-convergence}
with \(\kappa \coloneqq \alpha \bigl( 2 \rho \wedge (2|S|)^{-1} \bigr)\).
\end{proof}

\begin{rem}
Our \cref{thm:mf-entropy-convergence} can be compared to
\cite[Theorem 56]{hypocoer},
where kinetic mean field Langevin dynamics with two-body interaction are studied
and \(O(t^{-\infty})\) entropic convergence to equilibrium is shown,
under the assumption that the mean field dependence is small.
This restriction is lifted by our method
which leverages the functional convexity.
\end{rem}

\begin{rem}
The regularized energy functional \(F_k\) is such that
\(x \mapsto D_m F_k(m, x)\) has bounded derivatives of every order.
However \(m \mapsto D_m F_k (m,x)\) remains only Lipschitz continuous
and we are not aware of any approximation argument
that allows us to obtain differentiability in the measure argument.
Consequently we use still the result from \cite{gf}
to treat this low regularity.
\end{rem}

\subsection{Particle system}
\label{sec:ps}

In this section we study the system of particles described by
the \emph{linear} Fokker--Planck equation \cref{eq:ps-fp}
and the SDE \cref{eq:ps-sde}.
Note that since the dynamics is linear, its wellposedness is classical
and we omit its proof.

We first show that for our model we can construct hypocoercive functionals
whose constants are independent of the number of particles.

\begin{lem}[Uniform-in-\(N\) hypocoercivity]
\label{lem:ps-unif-hypocoer}
Assume \(F\) satisfies \cref{eq:lip}
and there exists a measure \(m^N_\infty\) satisfying \cref{eq:ps-invariant-measure}
and having finite exponential moments.
Let \(t \mapsto m^N_t\) be a solution to the \(N\)-particle Fokker--Planck equation \cref{eq:ps-fp}
in \(C \bigl([0,T]; \mathcal P_2(\mathbb R^{2dN})\bigr)\)
whose initial value \(m^N_0\) has finite entropy and finite Fisher information.
Then there exist constants \(a\), \(b\), \(c\), \(\alpha > 0\)
depending only on \(M^F_{mx}\) and \(M^F_{mm}\)
such that
\(ac > b^2\) and the functional
\begin{align}
\mathcal E^N(m^N) &\coloneqq
\mathcal F^N(m^N) + I_{a,b,c}\bigl(m^N\big| m^N_\infty\bigr) \nonumber \\
&\coloneqq \mathcal F^N(m^N)
+ \sum_{i=1}^N \biggl( a \int \bigl|\nabla_{v^i} \log h^N (\mathbf z)\bigr|^2
m^N(d\mathbf z) \nonumber \\
&\hphantom{\coloneqq \mathcal F^N(m^N) + \sum_{i=1}^N \biggl(}
+ 2b \int \nabla_{v^i} \log h^N (\mathbf z) \cdot \nabla_{x^i}
\log h^N (\mathbf z) m^N(d\mathbf z) \nonumber \\
&\hphantom{\coloneqq \mathcal F^N(m^N) + \sum_{i=1}^N \biggl(}
+ c \int \bigl|\nabla_{x^i} \log h^N (\mathbf z)\bigr|^2 m^N(d\mathbf z) \biggr),
\label{eq:ps-hypocoer-functional}
\end{align}
where \(h^N \coloneqq m^N\!\big/ m^N_\infty\),
is finite on \(m^N_t\) for \(t > 0\);
moreover, the mapping \(t \mapsto \mathcal E^N\bigl(m^N_t\bigr)\) satisfies
\begin{equation}
\label{eq:ps-unif-hypocoer}
\mathcal E^N\bigl(m^N_t\bigr) \leq \mathcal E^N\bigl(m^N_s\bigr)
- \alpha \int_s^t I\bigl(m^N_u \big| m^N_\infty\bigr) du
\end{equation}
for every \(t\), \(s\) such that \(t \geq s \geq 0\).
\end{lem}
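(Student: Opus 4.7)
The plan is to adapt the hypocoercive computation from the proof of \cref{thm:mf-entropy-convergence} to the $N$-particle setting, exploiting the fact that \cref{eq:ps-fp} is a \emph{linear} Fokker--Planck equation with effective potential $U^N(\mathbf x) = NF(\mu_{\mathbf x}) + \frac{1}{2}|\mathbf v|^2$. The crucial new observation, and the reason we can hope for $N$-independent constants, is that the operator norm of the full $dN \times dN$ Hessian $\nabla_{\mathbf x}^2[NF(\mu_{\mathbf x})]$ is bounded uniformly in $N$ by a constant depending only on $M^F_{mx}$ and $M^F_{mm}$. Indeed, a direct computation gives
\[
\partial_{x^i}\partial_{x^j}[NF(\mu_{\mathbf x})]
= \delta_{ij}\nabla_x D_m F(\mu_{\mathbf x}, x^i)
+ \tfrac{1}{N} D_m^2 F(\mu_{\mathbf x}, x^i, x^j),
\]
and combining the spatial Lipschitz bound on the diagonal blocks with a block-matrix Cauchy--Schwarz estimate on the off-diagonal part---which exploits precisely the operator-norm bound on $D_m^2 F$ rather than a pointwise Hessian bound---yields the claimed uniform control.

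Equipped with this bound, I would introduce the Villani-style framework with $A_i \coloneqq \nabla_{v^i}$, transport operator $B \coloneqq \sum_i (v^i \cdot \nabla_{x^i} - D_m F(\mu_{\mathbf x}, x^i) \cdot \nabla_{v^i})$ and commutators $C_i \coloneqq [A_i, B] = \nabla_{x^i}$, set $u \coloneqq \log h^N$, and compute the time derivatives of the three families of terms in $I_{a,b,c}(m^N_t | m^N_\infty)$ along the flow. After summing over $i = 1, \ldots, N$, the same algebraic manipulations as in the mean field proof produce an inequality of the form
\[
\frac{d}{dt}\mathcal E^N(m^N_t) \leq - Y_t^{\mathsf T} K Y_t,
\]
where $Y_t$ aggregates the $L^2(m^N_t)$-norms of $A_i u$, $A_i^2 u$, $C_i u$, $C_i A_i u$ summed over $i$, and $K$ is literally the same $4 \times 4$ matrix as in the proof of \cref{thm:mf-entropy-convergence}. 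Choosing $a$, $b$, $c$ as there makes $K$ positive-definite with some smallest eigenvalue $\alpha > 0$ depending only on $M^F_{mx}$, $M^F_{mm}$, which yields \cref{eq:ps-unif-hypocoer}.

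For the regularity, I would follow the two-step approximation strategy of the mean field proof: first assume the higher-order regularity \cref{eq:space-higher} and that $m^N_0 / m^N_\infty$ lies in an $N$-particle analogue of the standard algebra $\mathcal A_+$, in which setting all derivatives exist classically and a Feynman--Kac representation gives the requisite pointwise bounds; then remove these assumptions by mollifying $F$ via $F_k(m) \coloneqq F(m \star \rho_k)$, approximating $m^N_0$ as in \cref{prop:std-alg-density}, and passing to the limit using the lower semicontinuity of entropy and Fisher information together with the synchronous-coupling stability of the $N$-particle SDE. The finiteness of $\mathcal E^N(m^N_t)$ for $t > 0$ then follows from the short-time smoothing already present in the classical case.

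The main obstacle is the bookkeeping of the cross contributions: since $D_m F(\mu_{\mathbf x}, x^i)$ depends on \emph{all} the coordinates $(x^j)_{j=1}^N$, each hypocoercive derivative produces a double sum $\sum_{i,j}$ whose off-diagonal entries carry a $1/N$ prefactor from the empirical measure. The task is to verify that, after Cauchy--Schwarz on these $N(N-1)$ cross terms and use of the uniform operator-norm bound on $D_m^2 F$, the total contribution is dominated by $\sum_i (\|A_i u\|^2 + \|C_i u\|^2)$ with a constant independent of $N$. This is precisely the phenomenon, highlighted in the Contributions subsection, that the mean field interaction contributions are absorbed by the diagonal Fisher information terms already present in the linear Langevin case.
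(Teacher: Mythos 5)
Your overall strategy matches the paper's proof: derive a uniform-in-$N$ operator-norm bound on $\nabla^2 U^N$ where $U^N(\mathbf x)=NF(\mu_{\mathbf x})$, run Villani's $A_i=\nabla_{v^i}$, $C_i=\nabla_{x^i}$ hypocoercive computation with the componentwise Fisher information, sum over $i$ to recover the same $4\times4$ matrix $K$ as in the mean field case, and then do a two-step approximation (first on $m^N_0$, then on the drift). The one genuinely different move is how you establish $\bigl\Vert\nabla^2 U^N\bigr\Vert_{\textnormal{op}}\leq M^F_{mx}+M^F_{mm}$. You propose to compute the Hessian explicitly as $\delta_{ij}\nabla_x D_m F(\mu_{\mathbf x},x^i)+\frac{1}{N}D_m^2F(\mu_{\mathbf x},x^i,x^j)$ and bound the off-diagonal block part by the operator-norm bound on $D_m^2F$. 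That works, but it invokes a hypothesis (existence and uniform boundedness of $D_m^2F$) that this lemma does not assume: the statement only imposes \cref{eq:lip}. The paper instead bounds $\nabla^2 U^N$ purely from the first-order information: it shows $|\nabla_i U^N(\mathbf x)-\nabla_i U^N(\mathbf x')|\leq M^F_{mm}W_2(\mu_{\mathbf x},\mu_{\mathbf x'})+M^F_{mx}|x^i-x'^i|$, sums the squares over $i$, uses $NW_2^2(\mu_{\mathbf x},\mu_{\mathbf x'})\leq|\mathbf x-\mathbf x'|^2$, and optimizes an $\varepsilon$-weighted Cauchy--Schwarz to get the Lipschitz constant $M^F_{mx}+M^F_{mm}$ for $\nabla U^N$. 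This buys you the same bound under strictly weaker hypotheses, without requiring $D_m^2F$ to exist pointwise (mollification of $U^N$ then handles the classical-derivative side). If you want your argument to prove the lemma as stated, you should replace your second-derivative decomposition by this first-order Lipschitz argument (or note explicitly that you are proving a version of the lemma under an added hypothesis, which the paper would regard as unnecessary).

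A secondary remark on the approximation step: you propose to mollify $F$ at the level of the mean field functional, i.e.\ $F_k(m)=F(m\star\rho_k)$. The paper mollifies the effective potential $U^N$ directly, setting $U^N_k=U^N\star\rho_k$ in $\mathbb R^{dN}$, which is simpler here because the $N$-particle dynamics is linear and the invariant measure $m''^N_{k,\infty}\propto\exp(-U^N_k(\mathbf x)-\frac12|\mathbf v|^2)$ can be written down immediately. Mollifying $F$ yields $U^N_k(\mathbf x)=NF(\mu_{\mathbf x}\star\rho_k)$, which is not the spatial convolution $U^N\star\rho_k$; it is a valid alternative (this is what the paper does in the mean field case), but you would need to re-verify the uniform Hessian bound and the $\mathcal P_2$-stability of the flow under this particular smoothing. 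Both routes lead to the same conclusion.
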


\begin{rem}
The constants \(a\), \(b\), \(c\) are possibly different from those appearing in the proof of \cref{thm:mf-entropy-convergence}.
\end{rem}

\begin{proof}
We first show that the condition \cref{eq:lip} implies
a bound on the second-order derivatives of
\(\mathbf x \mapsto U^N(\mathbf x) \coloneqq NF(\mu_{\mathbf x})\).
The first-order derivatives satisfy
\begin{multline*}
\bigl|\nabla_i U^N (\mathbf x) - \nabla_i U^N (\mathbf x')\bigr|
= \bigl|D_m F(\mu_{\mathbf x}, x^i) - D_m F(\mu_{\mathbf x'}, x'^i)\bigr| \\
\leq M^F_{mm} W_2 (\mu_{\mathbf x}, \mu_{\mathbf x'}) + M^F_{mx} |x^i - x'^i|.
\end{multline*}
Summing over \(i\), we obtain for every \(\varepsilon > 0\),
\begin{multline*}
\bigl| \nabla U^N (\mathbf x) - \nabla U^N (\mathbf x') \bigr|^2
\leq (1 + \varepsilon)\bigl(M^F_{mm}\bigr)^2 NW_2^2(\mu_{\mathbf x}, \mu_{\mathbf x'})
+ (1 + \varepsilon^{-1}) \bigl(M^F_{mx}\bigr)^2 |\mathbf x - \mathbf x'|^2 \\
\leq \Bigl((1 + \varepsilon)\bigl(M^F_{mm}\bigr)^2
+ (1 + \varepsilon^{-1}) \bigl(M^F_{mx}\bigr)^2 \Bigr)
|\mathbf x - \mathbf x'|^2.
\end{multline*}
Optimizing \(\varepsilon\)
yields \(\bigl| \nabla U^N (\mathbf x) - \nabla U^N (\mathbf x') \bigr|
\leq \bigl(M^F_{mm} + M^F_{mx}\bigr) |\mathbf x - \mathbf x'|\).
Define
\[
\bigl\Vert\nabla^2 U^N\bigr\Vert_{\infty}
= \bigl\Vert\nabla^2 U^N\bigr\Vert_{\textnormal{op},\infty}
= \esssup_{\mathbf x \in \mathbb R^{dN}} \sup_{\mathbf x' \in \mathbb R^{dN} : |\mathbf x'|_2 = 1} \bigl|\nabla^2 U^N (\mathbf x) \mathbf x'\bigr|_2.
\]
From the Lipschitz bound we obtain
\begin{equation}
\label{eq:D^2-U^N-bound}
\bigl\Vert\nabla^2 U^N\bigr\Vert_{\textnormal{op},\infty} \leq M^F_{mx} + M^F_{mm}.
\end{equation}

Now suppose there exist a constant \(M\) such that
\(U^N\) satisfies
\begin{equation}
\label{eq:U^N-bound}
\text{$\mathbf x \mapsto U^N (\mathbf x)$ is $C^4$}
\qquad\text{and}\qquad
\sum_{k=3}^4 \bigl\Vert\nabla^k U^N\bigr\Vert_\infty \leq M,
\end{equation}
and that \(h^N_0 = m^N_0\!\big/ m^N_\infty\) satisfies
\begin{equation}
\label{eq:h^N_0-bound}
h^N_0(\mathbf z) + \frac{1}{h^N_0(\mathbf z)} + \sum_{k=1}^4 \bigl|\nabla^k h^N_t(\mathbf z)\bigr| \leq M
\end{equation}
for every \(\mathbf z \in \mathbb R^{2dN}\).
We apply \cref{prop:std-alg-stability}
to show that under our assumptions, there exists a constant \(M_T\) such that
\begin{equation}
\label{eq:h^N-bound}
h^N_t(\mathbf z) + \frac{1}{h^N_t(\mathbf z)} + \sum_{k=1}^4 \bigl|\nabla^k h^N_t(\mathbf z)\bigr|
\leq \exp \bigl( M_T(1 + |\mathbf z|) \bigr)
\end{equation}
for every \((t,\mathbf z) \in [0,T] \times \mathbb R^{2dN}\)
(in fact, \(\mathbf z \mapsto h^N_t(\mathbf z)\) remains lower and upper bounded and its up-to-fourth-order derivatives grow at most polynomially).

We denote \(u^N_t = \log h^N_t\).
In view of the regularity bound \cref{eq:h^N-bound},
we have
\begin{align*}
- \frac{d\mathcal F^N\bigl(m^N_t\bigr)}{dt}
&= \sum_{i=1}^N \int |\nabla_{v^i} u^N_t |^2 m^N_t, \\
- \frac{d}{dt} \int \bigl|\nabla_{v^i} u^N_t \bigr|^2 m^N_t
&= 2 \int \Bigl( \nabla_{x^i} u^N_t \cdot \nabla_{v^i} u^N_t
+ \bigl| \nabla^2_{v^i} u^N_t \bigr|^2 \\
&\hphantom{=2\int\Bigl(}+ \nabla_{v^i} u^N_t \cdot \nabla_{v^i} u^N_t \Bigr) m^N_t, \\
- \frac{d}{dt} \int \nabla_{v^i} u^N_t \cdot \nabla_{x^i} u^N_t m^N_t
&= \int \biggl( - \sum_{j=1}^N \nabla_{v^i} u^N_t \nabla^2_{ij} U^N \nabla_{v^j} u^N_t
+ \bigl|\nabla_{x^i} u^N_t\bigr|^2 \\
&\phantom{=\int\biggl(}+ 2 \nabla^2_{v^i} u^N_t \cdot \nabla_{v^i}\nabla_{x^i} u^N_t
+ \nabla_{v^i} u^N_t \cdot \nabla_{x^i} u^N_t \biggr) m^N_t, \\
- \frac{d}{dt} \int \nabla_{x^i} u^N_t \cdot \nabla_{x^i} u^N_t m^N_t
&= \int \biggl( - 2 \sum_{j=1}^N \nabla_{x^i} u^N_t \nabla^2_{ij} U^N \nabla_{v^j} \nabla_{v^j} u^N_t \\
&\phantom{=\int\biggl(}+ 2 \bigl|\nabla_{x^i}\nabla_{v^i} u^N_t\bigr|^2 \biggr) m^N_t,
\end{align*}
as is computed in \cite{hypocoer}.
Denote the Hilbertian norm by
\(\lVert \cdot \rVert = \lVert \cdot \rVert_{L^2(m^N_t)}\)
and define the four-dimensional vector
\begin{equation}
\label{eq:Y^N}
Y^N_t = \bigl(\bigl\Vert\nabla_{\mathbf v} u^N_t\bigr\Vert,
\bigl\Vert\nabla^2_{\mathbf v} u^N_t\bigr\Vert,
\bigl\Vert\nabla_{\mathbf x}u^N_t\bigr\Vert,
\bigl\Vert\nabla_{\mathbf x}\nabla_{\mathbf{v}} u^N_t\bigr\Vert\bigr)^\mathsf{T}.
\end{equation}
By Cauchy--Schwarz we have
\(- \frac{d}{dt} \mathcal E^N\bigl(m^N_t\bigr) \geq (Y^N_t)^\mathsf{T} K Y^N_t\)
where
\[
K \coloneqq \begin{pmatrix}
1 + 2a - 2\bigl\Vert\nabla^2 U^N\bigr\Vert_{\textnormal{op},\infty} b
& -2b & -2a & 0\\
  & 2a & -2 \bigl\Vert\nabla^2 U^N\bigr\Vert_{\textnormal{op},\infty} c & -4b \\
  & & 2b & 0\\
  & & & 2c
\end{pmatrix},
\]
where \( \Vert\nabla^2 U^N\Vert_{\textnormal{op},\infty} \)
is bounded by \cref{eq:D^2-U^N-bound}.
We then apply the same argument
as in the proof of \cref{thm:mf-entropy-convergence}
to pick \(a\), \(b\), \(c\) such that
\(ac > b^2\) and
\(K\) is positive-definite with its smallest eigenvalue \(\alpha > 0\).
Then,
\[
- \frac{d\mathcal E^N\bigl(m^N_t\bigr)}{dt}
\geq (Y^N_t)^\mathsf{T} K Y^N_t \geq \alpha I\bigl(m^N_t \big| m^N_\infty\bigr),
\]
from which the desired inequality \cref{eq:ps-unif-hypocoer} follows.

We then show the inequality \cref{eq:ps-unif-hypocoer} holds for
general mean field functional \(F\) and initial value \(m^N_0\).
First, suppose still that
\(U^N\) satisfies additionally the bound \cref{eq:U^N-bound}
but no longer suppose \(m^N_0\) satisfies additionally \cref{eq:h^N_0-bound}.
As \(m^N_0\) has finite second moment, finite entropy and finite Fisher information,
we can find a sequence of measures \((m'^N_{n,0})_{n \in \mathbb N}\),
each of which satisfies the bound \cref{eq:h^N_0-bound}, such that
\[
\lim_{n \to +\infty}
W_2\bigl(m'^N_{n,0}, m^N_0\bigr)
+ \bigl|H\bigl(m'^N_{n,0}\bigr) - H\bigl(m'^N_0\bigr)\bigr|
+ \bigl|I\bigl(m'^N_{n,0}\bigr) - I\bigl(m'^N_0\bigr)\bigr|
= 0,
\]
by the procedure in the proof of \cref{prop:std-alg-density}.
We have the convergence \(m'^N_{n,t} \to m^N_t\) in \(\mathcal P_2\).
So taking the lower limit on both sides of
\[
\mathcal E^N\bigl(m'^N_{n,t}\bigr) - \mathcal E^N\bigl(m'^N_{n,0}\bigr)
+ \alpha \int_0^t I\bigl(m'^N_{n,s} \big| m^N_\infty\bigr) ds \leq 0
\]
yields \cref{eq:ps-unif-hypocoer} for \(s = 0\),
thanks to the \(\mathcal P_2\)-continuity of \(F\)
and the \(\mathcal P_2\)-lower-semicontinuity of entropy and Fisher information,
proved in \cref{lem:lower-semi-cont}.

Second, we no longer suppose \(U^N\) satisfies the bound \cref{eq:U^N-bound}
and set
\[
U^{N}_k = U^N \star \rho_k
\]
for a sequence of smooth mollifiers \((\rho_k)_{k \in \mathbb N}\) in \(\mathbb R^{dN}\).
Then \(U^N_k\) is \(C^4\) and satisfies
its second and fourth-order derivatives
\(\nabla^\nu U^N_k = \nabla^2 U^N \star \nabla^{\nu-2}\rho_k\)
with \(\nu = 3\), \(4\) are bounded
as \(\bigl\Vert \nabla^2 U^N \bigr\Vert_\infty \leq M^F_{mx} + M^F_{mm}\).
Moreover, from the bound \cref{eq:D^2-U^N-bound} on \(\nabla^2 U^N\) we deduce
\begin{equation}
\label{eq:U^N_k-approx}
\bigl\Vert \nabla (U^N_k - U^N) \bigr\Vert_\infty \to 0
\end{equation}
and \(\bigl\Vert \nabla^2 U^N_k \bigr\Vert_\infty
\leq \bigl\Vert \nabla^2 U^N \bigr\Vert_\infty \leq M^F_{mx} + M^F_{mm}\).
Let \(\bigl(m''^N_{k,t}\bigr)_{t \geq 0}\) be the flow of measures driven by the regularized potential \(U^N_k\)
with the initial value \(m''^N_{k,0} = m^N_0\)
and denote its invariant measure by \(m''^N_{k,\infty}\).
That is to say,
\(m''^N_{k,\infty}\) is the probability measure
proportional to
\(\exp \bigl( - U^N_k(\mathbf x) - \frac 12 |\mathbf v|^2 \bigr)
d\mathbf xd\mathbf v\).
Thanks to the bound \cref{eq:U^N_k-approx},
we can apply the synchronous coupling result in \cref{lem:synchronous-coupling}
and obtain \(m''^N_{k,t} \to m^N_t\) in \(\mathcal P_2\) for every \(t \geq 0\).
The result obtained in the previous paragraph writes
\begin{multline*}
H\bigl(m''^N_{k,t}\big| m''^N_{k,\infty}\bigr)
+ I_{a,b,c} \bigl(m''^N_{k,t} \big| m''^N_{k,\infty}\bigr)
- H\bigl(m^N_0\big| m''^N_{k,\infty}\bigr)
- I_{a,b,c}\bigl(m^N_0\big| m''^N_{k,\infty}\bigr) \\
+ \alpha \int_0^t I\bigl(m''^N_{k,s}\big| m''^N_{k,\infty}\bigr) ds \leq 0
\end{multline*}
for every \(t \geq 0\)
and we take the lower limit on both sides to obtain \cref{eq:ps-unif-hypocoer}
with \(s = 0\) for general initial values and general mean field functional.
In particular, this implies for every \(t \geq 0\),
\(m^N_t\) has finite entropy and finite Fisher information.
Then we apply the same argument to the flow with \(m^N_s\) as the initial value
and obtain \cref{eq:ps-unif-hypocoer} for general \(s \geq 0\).
\end{proof}

\begin{rem}
\label{rem:uniform-in-N-LSI}
If we additionally assume a uniform-in-\(N\) LSI for \(m^N_\infty\),
then we can directly establish
\[
\frac{d\mathcal E_N\bigl(m_t^N\bigr)}{dt} \leq - \kappa \mathcal E_N\bigl(m_t^N\bigr),
\]
for a constant \(\kappa > 0\) independent of \(N\).
This approach has been explored in a number of previous works.
We do not impose such an assumption or sufficient conditions for it,
as they often requires the mean field interaction
to be small enough or (semi-)convex enough,
excluding the application to neural networks in \cref{sec:app}.
\end{rem}

We then give the proof of \cref{thm:ps-entropy-convergence}.
The method of proof is similar to \cite[Theorem 2.3]{ulpoc}
and we only need to take into account of the additional kinetic terms.
We give a complete proof only for the sake of self-containedness.

\begin{proof}[Proof of \cref{thm:ps-entropy-convergence}]

We pick the positive constants \(a\), \(b\), \(c\), \(\alpha\)
depending only on \(M^F_{mx}\) and \(M^F_{mm}\) such that
\(ac > b^2\) and \cref{eq:ps-unif-hypocoer} holds for every \(t \geq 0\),
according to \cref{lem:ps-unif-hypocoer}.
Then, as in the proof of \cite[Theorem 2.3]{ulpoc},
we will establish a lower bound of the relative Fisher information
\(I_t \coloneqq I\bigl(m^N_t \big| m^N_\infty\bigr)\)
in order to obtain the desired result.

\proofstep{Regularity of conditional distribution}
By local hypoelliptic positivity (see e.g.\ \cite[Theorem A.19 and Corollary A.21]{hypocoer}),
we know that for every \(t > 0\) and every \(\mathbf z \in \mathbb R^{2dN}\),
\(m^N_t(\mathbf z) > 0\).
Let \(i \in \{1,\ldots,N\}\).
Define the marginal density \(m^{N,-i}_t(\mathbf z^{-i}) = \int m^N_t(\mathbf z) d z^i\), which is strictly positive by the local positivity of \(m^N_t\)
and is lower semicontinuous by Fatou's lemma.
By the Fubini theorem,
we have \(\int m^{N,-i}_t (\mathbf z^{-i}) d\mathbf z^{-i} = 1\).
Together with the lower semicontinuity,
we obtain that \(m^{N,-i}_t(\mathbf z^{-i})\) is finite everywhere.
We are therefore able to define the conditional probability density
\[
m^{N,i|-i}_t (z^i | \mathbf z^{-i} )
= \frac{m^{N}_t (\mathbf z)}{m^{N,-i}_t(\mathbf z^{-i})}
= \frac{m^{N}_t (\mathbf z)}{\int m^{N}_t (\mathbf z) dz^{i}},
\]
which is weakly differentiable in \(z^i\) and strictly positive everywhere.
We can also define the conditional density \(m^{N,i|-i}_\infty\) for the invariant measure \(m^{N}_\infty\),
and the regularity follows directly from its explicit expression.

\proofstep{Decomposing Fisher componentwise}
Using the conditional distributions, we can decompose the relative Fisher information as
\begin{align*}
I_t
&= \int \biggl| \nabla \log \frac{m^N_t(\mathbf z)}{m^N_\infty(\mathbf z)}\biggr|^2 m^N_t (d\mathbf z)
= \Expect \Biggl[ \biggl| \nabla \log \frac{m^N_t (\mathbf Z_t)}{m^N_\infty(\mathbf Z_t)}\biggr|^2 \Biggr] \\
&= \sum_{i=1}^N \Expect \Biggl[ \biggl| \nabla_{z^i} \log \frac{m^{N,i|-i}_t
\bigl(Z^i_t\big| \mathbf Z^{-i}_t\bigr) m^{N,-i}_t\bigl(\mathbf Z^{-i}_t\bigr)}
{m^N_\infty (\mathbf Z_t)}\biggr|^2 \Biggr] \\
&= \sum_{i=1}^N \Expect \Biggl[ \biggl| \nabla_{z^i}
\log \frac{m^{N,i|-i}_t\bigl(Z^i_t \big| \mathbf Z^{-i}_t\bigr)}
{m^{N}_\infty (\mathbf Z_t)}\biggr|^2 \Biggr] \\
&
= \sum_{i=1}^N \Expect \biggl[
\Bigl| \nabla_{z^i} \log m^{N,i|-i}_t\bigl(Z^i_t\big|\mathbf Z^{-i}_t\bigr)
+ D_m F\bigl(\mu_{\mathbf X_t}, X^i_t\bigr) + V^i_t \Bigr|^2 \biggr].
\end{align*}

\proofstep{Change of empirical measure and componentwise LSI}
We replace the empirical measure \(\mu_{\mathbf x}\) in \(D_m F\) by \(\mu_{\mathbf x^{-i}}\).
Define the difference \(\delta^i_1 (\mathbf x; y) = D_m F(\mu_{\mathbf x}, y) - D_m F(\mu_{\mathbf x^{-i}}, y)\)
and denote by \(\hat \mu_{\mathbf x^{-i}}\)
the probability on \(\mathbb R^{2d}\) such that
\[
\hat \mu_{\mathbf x^{-i}} (dxdv) \propto
\exp \biggl( - \frac{\delta F}{\delta m} (\mu_{\mathbf x^{-i}}, x)
- \frac 12 |v|^2 \biggr) dxdv.
\]
For every \(\varepsilon \in (0,1)\), the Fisher information satisfies
\begin{align*}
I_t&
=\sum_{i=1}^N \Expect \biggl[
\Bigl| \nabla_{x^i} \log m^{N,i|-i}_t\bigl(Z^i_t\big|\mathbf Z_t^{-i}\bigr)
+ D_m F\bigl(\mu_{\mathbf X^{-i}_t}, X^i_t\bigr) + V^i_t
+ \delta^i_1\bigl(\mathbf X_t; X^i_t\bigr)\Bigr|^2 \biggr] \\
&\geq \sum_{i=1}^N \Expect\!\left[
\begin{multlined}
(1 - \varepsilon) \Bigl| \nabla_{x^i} \log m^{N,i|-i}_t\bigl(X^i_t\big|\mathbf X_t^{-i}\bigr)
+ D_m F\bigl(\mu_{\mathbf X_t^{-i}}, X^i_t\bigr) + V^i_t \Bigr|^2 \\
- (\varepsilon^{-1} -1) \bigl|\delta^i_1(\mathbf X_t; X^i_t)\bigr|^2
\end{multlined}\right]
\\
&= (1 - \varepsilon) \sum_{i=1}^N \Expect\Bigl[ I \Bigl(m^{N,i|-i}_t\bigl(\cdot\big|\mathbf Z_t^{-i}\bigr) \Big| \hat \mu_{\mathbf X_t^{-i}}\Bigr)\Bigr]
- (\varepsilon^{-1} - 1) \sum_{i=1}^N \Expect \Bigl[\bigl|\delta^i_1\bigl(\mathbf X_t; X^i_t\bigr)\bigr|^2\Bigr],
\end{align*}
where we used the elementary inequality \((a + b)^2 \geq (1 - \varepsilon) |a|^2 - (\varepsilon^{-1} - 1) |b|^2\).
Define the first error
\begin{equation}
\label{eq:Delta_1}
\Delta_1 \coloneqq \sum_{i=1}^N \Expect \Bigl[\bigl|\delta^i_1\bigl(\mathbf X_t; X^i_t\bigr)\bigr|^2\Bigr]
\coloneqq \sum_{i=1}^N \Expect \Bigl[\bigl|D_m F\bigl(\mu_{\mathbf X_t},X_t^i\bigr)
- D_m F\bigl(\mu_{\mathbf X_t^{-i}}, X_t^i\bigr)\bigr|^2\Bigr].
\end{equation}
The previous inequality writes
\begin{equation}
\label{eq:I-lower-bound-1}
I_t \geq
(1 - \varepsilon) \sum_{i=1}^N \Expect\Bigl[ I \Bigl(m^{N,i|-i}_t\bigl(\cdot\big|\mathbf Z_t^{-i}\bigr) \Big| \hat \mu_{\mathbf X_t^{-i}}\Bigr) \Bigr] - (\varepsilon^{-1} - 1) \Delta_1.
\end{equation}
We apply the uniform \(\rho\)-log-Sobolev inequality \cref{eq:lsi} for \(\hat \mu_{\mathbf X^i_t}\) with \(\rho\) defined by \cref{eq:defn-rho}
and obtain
\begin{multline*}
\frac{1}{4\rho}I\Bigl(m^{N,i|-i}_t\bigl(\cdot\big|\mathbf Z^{-i}_t\bigr) \Big | \hat \mu_{\mathbf X^{-i}_t}\Bigr)
\geq H\Bigl(m^{N,i|-i}_t\bigl(\cdot\big|\mathbf Z^{-i}_t\bigr) \Big| \hat \mu_{\mathbf X^{-i}_t}\Bigr) \\
= \int \biggl( \log m^{N,i|-i}_t\bigl(x^i \big| \mathbf Z^{-i}_t\bigr)
+ \frac{\delta F}{\delta m} \bigl(\mu_{\mathbf X^{-i}_t}, x^i\bigr)
+ \frac 12 |v^i|^2\biggr) m^{N,i|-i}_t\bigl(dz^i\big|\mathbf Z^{-i}_t\bigr) \\
+ \log Z\bigl(\hat \mu_{\mathbf X^{-i}_t}\bigr),
\end{multline*}
where the last quantity is the normalization factor
\[
Z\bigl(\hat \mu_{\mathbf X^{-i}_t} \bigr)
\coloneqq \int \exp \biggl( - \frac{\delta F}{\delta m}
\bigl( \mu_{\mathbf X^{-i}_t}, x \bigr)
- \frac 12 \lvert v \rvert^2\biggr) dxdv.
\]
Then we apply Jensen's inequality to \(\log Z (\hat\mu_{\mathbf x^{-i}})\) to obtain
\[
\log Z (\hat\mu_{\mathbf X^{-i}_t})
\geq - \int \biggl( \frac{\delta F}{\delta m}\bigl(\mu_{\mathbf X^{-i}_t}, x^i\bigr)
+ \frac 12|v^i|^2 \biggr) m_\infty(dz^i) - \int m_\infty(z^i) \log m_\infty(z^i) dz^i.
\]
Chaining the previous two inequalities and summing over \(i\),
we have
\begin{multline}
\label{eq:I-i|-i-lower-bound}
\frac{1}{4\rho} \sum_{i=1}^N
I\Bigl(m^{N,i|-i}_t\bigl(\cdot\big|\mathbf Z^{-i}_t\bigr)
\Big| \hat \mu_{\mathbf X^{-i}_t}\Bigr)
\geq \sum_{i=1}^N \biggl[ \int \biggr( \frac{\delta F}{\delta m} (\mu_{\mathbf X_t^{-i}}, x^i) + \frac 12 |v^i|^2 \biggl) \\
\Bigl(m^{N,i|-i}_t\bigl(dz^i\big|\mathbf Z_t^{-i}\bigr) - m_\infty(dz^i)\Bigr)
+ H\Bigl(m^{N,i|-i}_t\bigl(\cdot\big|\mathbf Z_t^{-i}\bigr)\Bigr) - H(m_\infty)
\biggr].
\end{multline}

\proofstep{Another change of empirical measure}
We are going to replace \(\mu_{\mathbf x^{-i}}\) by
\(\mu_{\mathbf x}\) in \cref{eq:I-i|-i-lower-bound}.
Define
\(
\delta^i_2 (\mathbf x; y) \coloneqq \frac{\delta F}{\delta m} (\mu_{\mathbf x^{-i}}, y)
- \frac{\delta F}{\delta m} (\mu_{\mathbf x}, y)
\)
and the second error
\begin{equation}
\Delta_2
\coloneqq \sum_{i=1}^N \int \delta^i_2 (\mathbf x; x^i) m^N_t(d\mathbf z)
- \sum_{i=1}^N \iint \delta^i_2 (\mathbf x; x') m_\infty (dz') m^N_t(d\mathbf z). \label{eq:Delta_2}
\end{equation}
Taking expectations on both sides of \cref{eq:I-i|-i-lower-bound}, we obtain
\begin{multline}
\frac{1}{4\rho} \sum_{i=1}^N \Expect
\Bigl[I\Bigl(m^{N,i|-i}_t\bigl(\cdot\big|\mathbf Z^{-i}_t\bigr) \Big| \hat \mu_{\mathbf X^{-i}_t}\Bigr)\Bigr] \\
\geq N \Expect \biggl[ \int \biggl( \frac{\delta F}{\delta m} (\mu_{\mathbf X_t}, x) + \frac12 |v|^2 \biggr) (\mu_{\mathbf Z_t} - m_{\infty}) (dz) \biggr]
+ \sum_{i=1}^N \Expect \Bigl[H\Bigl(m^{N,i|-i}_t\bigl(\cdot\big|\mathbf Z_t^{-i}\bigr)\Bigr)\Bigr] \\
- N H(m_\infty) + \Delta_2.
\end{multline}
Thanks to the convexity of \(F\), the first term satisfies the tangent inequality
\begin{multline}
\label{eq:ps-entropy-convergence-tangent}
N \Expect \biggl[ \int \biggl( \frac{\delta F}{\delta m} (\mu_{\mathbf X_t}, x) + \frac12 |v|^2 \biggr) (\mu_{\mathbf Z_t} - m_{\infty}) (dz) \biggr] \\
\geq N \Expect \bigl[ F\bigl(\mu_{\mathbf X_t}\bigr) - F\bigl(m^x_{\infty}\bigr) \bigr]
+ \frac 12 \int |\mathbf v|^2 m^N_t(d\mathbf z) - \frac N2 \int |v^2| m_\infty(dz) \\
= F^N\bigl(m^N_t\bigr) - NF(m_{\infty})
+ \frac 12 \int |\mathbf v|^2 m^N_t(d\mathbf z) - \frac N2 \int |v^2| m_\infty(dz).
\end{multline}
For the second term we apply the information inequality \cref{eq:info-ineqs} to obtain
\[
\sum_{i=1}^N \Expect^{-i} \Bigl[
H\Bigl(m^{N,i|-i}_t\bigl(\cdot\big|\mathbf Z_t^{-i}\bigr)\Bigr)\Bigr]
\geq H\bigl(m^N_t\bigr).
\]
Hence,
\[
\sum_{i=1}^N \Expect
\Bigl[I\Bigl(m^{N,i|-i}_t\bigl(\cdot\big|\mathbf Z^{-i}_t\bigr)
\Big| \hat \mu_{\mathbf X^{-i}_t}\Bigr)\Bigr]
\geq 4\rho
\Bigl(\mathcal F^N\bigl(m^N_t\bigr) - N\mathcal F(m_\infty) + \Delta_2\Bigr)
\]
by the definition of free energies
\(\mathcal F(m) = F(m) + H(m) + \frac 12 \int |v|^2 m\),
\(\mathcal F^N(m^N) = F^N(m^N) + H(m^N) + \frac 12 \int |\mathbf v|^2 m^N\).
Using \cref{eq:I-lower-bound-1},
we obtain
\begin{equation}
\label{eq:I-lower-bound-2}
I_t = I\bigl(m^N_t \big| m^N_\infty\bigr)
\geq 4\rho (1 - \varepsilon)
\Bigl(\mathcal F^N \bigl(m^N_t\bigr) - N\mathcal F(m_\infty) + \Delta_2\Bigr)
- (\varepsilon^{-1} - 1) \Delta_1.
\end{equation}

\proofstep{Bounding the errors \(\Delta_1\), \(\Delta_2\)}
The transport plan between \(\mu_{\mathbf x}\) and \(\mu_{\mathbf x^{-i}}\)
\begin{equation}
\label{eq:transport-plan-empiricals}
\pi^i = \frac 1N \sum_{j\neq i} \delta_{(x^j,x^j)}
+ \frac 1{N(N-1)} \sum_{j \neq i} \delta_{(x^j,x^i)}
\end{equation}
gives the bound
\(W_1 (\mu_{\mathbf x}, \mu_{\mathbf x^{-i}})
\leq \frac{1}{N(N-1)} \sum_{j\neq i}|x^j - x^i|\).
We use this transport plan to bound the errors \(\Delta_1\), \(\Delta_2\).

Let us treat the first error \(\Delta_1\).
Since \(m \mapsto D_m F(m,x)\) is \(M^F_{mm}\)-Lipschitz continuous in \(W_2\) metric,
we have
\[
\bigl|\delta^i_1(\mathbf x; y)\bigr|
\leq M^F_{mm} W_2 (\mu_{\mathbf x}, \mu_{\mathbf x^{-i}}).
\]
Under the \(L^2\)-optimal transport plan
\(\Law \bigl((Z^i_t)_{i=1}^N, (\tilde Z^i_\infty)_{i=1}^N\bigr) \in \Pi(m^N_t, m^{\otimes N}_\infty)\)
we have
\begin{align*}
\Delta_1 &= \sum_{i=1}^N \Expect\Bigl[ \bigl|\delta^i_1 (\mathbf X_t; X^i_t)\bigr|^2 \Bigr]
\leq\bigl(M^F_{mm}\bigr)^2 \sum_{i=1}^N \Expect\bigl[W_1^2\bigl(\mu_{\mathbf X_t}, \mu_{\mathbf X^{-i}_t}\bigr)\bigr] \\
&\leq \frac{\bigl(M^F_{mm}\bigr)^2}{N(N-1)} \Expect \biggl[ \sum_{\substack{1 \leq i,j \leq N \\ i\neq j}}\bigl|X^j_t - X^i_t\bigr|^2 \biggr] \\
&\leq \frac{3\bigl(M^F_{mm}\bigr)^2}{N(N-1)} \Expect \biggl[ \sum_{\substack{1 \leq i,j \leq N \\ i\neq j}}
\Bigl(\bigl|X^i_t - \tilde X^i_\infty\bigr|^2
+ \bigl|\tilde X^i_\infty - \tilde X^j_\infty\bigr|^2
+ \bigl|X^j_t - \tilde X^j_\infty\bigr|^2\Bigr) \biggr] \\
&\leq \frac{3\bigl(M^F_{mm}\bigr)^2}{N(N-1)}
\biggl(
2(N-1)\Expect \biggl[
\sum_{i=1}^N\bigl|X^i_t - \tilde X^i_\infty\bigr|^2 \biggr]
+ N(N-1) \Expect \Bigl[\bigl|\tilde X^1_\infty - \tilde X^2_\infty\bigr|^2\Bigr]
\biggr).
\end{align*}
The first term
\(\Expect \bigl[\sum_{i=1}^N|X^i_t - \tilde X^i_\infty|^2 \bigr]\)
is bounded by the Wasserstein distance \(W_2^2\bigl(m^N_t, m_\infty^{\otimes N}\bigr)\),
while the second \(\Expect \bigl[|\tilde X^1_\infty - \tilde X^2_\infty|^2\bigr]\)
equals \(2 \Var m^x_\infty\).
Hence the first error satisfies the bound
\begin{equation}
\Delta_1 \leq 6\bigl(M^F_{mm}\bigr)^2
\biggl( \frac 1N W_2^2\bigl(m^N_t, m^{\otimes N}_\infty\bigr)
+ \Var m_\infty \biggr). \label{eq:bound-Delta_1}
\end{equation}

Now treat the second error \(\Delta_2\).
The Lipschitz constant of
\(y \mapsto \delta^i_2(\mathbf x; y)
= \frac{\delta F}{\delta m}(\mu_{\mathbf x^{-i}}, y)
- \frac{\delta F}{\delta m}(\mu_{\mathbf x}, y)\)
is controlled by
\[
\bigl| \nabla_y \delta^i_2(\mathbf x; y) \bigr|
= \lvert D_m F(\mu_{\mathbf x}, y) - D_m F(\mu_{\mathbf x^{-i}}, y)\rvert
\leq M^F_{mm} W_1 (\mu_{\mathbf x}, \mu_{\mathbf x^{-i}}).
\]
Hence \(\bigl|\delta^i_2(\mathbf x; y) - \delta^i_2(\mathbf x; y')\bigr|
\leq M^F_{mm} W_1(\mu_{\mathbf x}, \mu_{\mathbf x^{-i}}) |y - y'|\).
Use Fubini's theorem to first integrate \(z'\) in the definition of the second error \cref{eq:Delta_2}
and let \(\tilde Z'_\infty\) be independent from \(\mathbf Z_t\).
We obtain
\begin{align*}
|\Delta_2| &\leq \sum_{i=1}^N \int \biggl( \int \bigl|\delta^i_2(\mathbf x; x^i) - \delta^i_2(\mathbf x; x')\bigr| m_\infty (dz') \biggr) m^N_t(d\mathbf z) \\
&\leq \sum_{i=1}^N \iint M^F_{mm}
W_1(\mu_{\mathbf x}, \mu_{\mathbf x^{-i}}) \lvert x' - x^i\rvert
m_\infty(dz') m^N_t(d\mathbf z) \\
&\leq \sum_{i=1}^N \iint \frac{M^F_{mm}}{N(N-1)} \sum_{j=1,\,j\neq i}^N
\lvert x^i - x^j \rvert |x' - x^i| m_\infty(dz') m^N_t(d\mathbf z) \\
&\leq \frac{M^F_{mm}}{2N(N-1)}
\sum_{i=1}^N \iint \sum_{j=1,\,j\neq i}^N
\bigl(\lvert x^i - x^j \rvert^2 + \lvert x' - x^i\rvert^2\bigr)
m_\infty(dz') m^N_t(d\mathbf z) \\
&\leq \frac{M^F_{mm}}{2N(N-1)}
\biggl(\sum_{\substack{i,j=1 \\ i\neq j}}^N
\Expect \Bigl[ \bigl| X^i_t - X^j_t\bigr|^2 \Bigr]
+ (N-1) \sum_{i=1}^N \Expect \Bigl[ \bigl|X^i_t - \tilde X'_\infty\bigr|^2 \Bigr]
\biggr).
\end{align*}
Using the same method we used for \(\Delta_1\),
we control the first term by
\[
\sum_{\substack{i,j=1 \\ i\neq j}}^N
\Expect \Bigl[ \bigl|X^i_t - X^j_t\bigr|^2 \Bigr]
\leq 6 N(N-1)\biggl(\frac 1N W_2^2\bigl(m_t^N, m_\infty^{\otimes N}\bigr)
+ \Var m^x_\infty\biggr).
\]
For the second term we work again under the \(L^2\)-optimal plan
\(\Law \bigl((Z^i_t)_{i=1}^N, (\tilde Z^i_\infty)_{i=1}^N\bigr) \in \Pi(m^N_t, m^{\otimes N}_\infty)\)
and let \(\tilde Z'_\infty\) remain independent from the other variables.
We have
\begin{multline*}
\sum_{i=1}^N \Expect \Bigl[ \bigl|X^i_t - \tilde X'_\infty\bigr|^2 \Bigr]
\leq 2 \sum_{i=1}^N \Bigl(\Expect \Bigl[\bigl|X^i_t - \tilde X^i_\infty\bigr|^2 \Bigr]
+ \Expect \Bigl[ \bigl|\tilde X^i_\infty - \tilde X'_\infty\bigr|^2 \Bigr] \Bigr) \\
\leq 2 N \biggl(\frac 1NW_2^2\bigl(m_t^N, m_\infty^{\otimes N}\bigr)
+ 2 \Var m^x_\infty\biggr).
\end{multline*}
As a result,
\begin{equation}
\label{eq:bound-Delta_2}
|\Delta_2| \leq M^F_{mm} \biggl( \frac 4N W_2^2\bigl(m_t^N,m_\infty^{\otimes N}\bigr)
+ 5\Var m^x_\infty \biggr).
\end{equation}

\proofstep{Conclusion}
Inserting the bounds on the errors \cref{eq:bound-Delta_1,eq:bound-Delta_2}
to the lower bound of Fisher information \cref{eq:I-lower-bound-2},
we obtain
\begin{align*}
I\bigl(m^N_t \big| m^N_\infty\bigr)
&\geq 4 \rho (1 - \varepsilon) \Bigl(\mathcal F^N\bigl(m^N_t\bigr) - N\mathcal F(m_\infty) \Bigr) \\
&\quad- \Bigl(16\rho M^F_{mm} + 6(\varepsilon^{-1} - 1) \bigl(M^F_{mm}\bigr)^2\Bigr) \frac 1N W_2^2\bigl(m_t^N, m^{\otimes N}_\infty\bigr) \\
&\quad- \Bigl(20\rho M^F_{mm} + 6(\varepsilon^{-1} - 1) \bigl(M^F_{mm}\bigr)^2\Bigr) \Var m^x_\infty.
\end{align*}
Thanks to the Poincaré inequality \cref{eq:poincare} for \(m_\infty = \hat m_\infty\),
its spatial variance satisfies
\begin{equation}
\label{eq:var-m-poincare}
2\rho \Var_{m_\infty} (x^i) \leq \Expect_{m_\infty}\bigl[|\nabla x^i|^2\bigr]= 1.
\end{equation}
So \(\Var m^x_\infty = \sum_{i=1}^d \Var_{m_\infty} (x^i) \leq d/2\rho\).
Using the \(T_2\)-transport inequality \cref{eq:t2} for \(m^{\otimes N}_\infty\) and the entropy sandwich \cref{lem:ps-entropy-sandwich}
we bound the transport cost by
\[
W_2^2\bigl(m^N_t, m^{\otimes N}_\infty\bigr)
\leq \frac 1{\rho} H\bigl(m^N_t\big| m^{\otimes N}_\infty\bigr)
\leq \frac 1{\rho} \Bigl(\mathcal F^N\bigl(m^N_t\bigr) - N \mathcal F(m_t)\Bigr).
\]
In the end we obtain \(\mathcal E^N\bigl(m^N_T\bigr)
\leq \mathcal E^N\bigl(m^N_s\bigr) - \alpha \int_s^T I_t dt\) where
\begin{multline*}
I_t = \frac{1}{2} I\bigl(m^N_t \big| m^N_\infty\bigr)
+ \frac{1}{2}I\bigl(m^N_t \big| m^N_\infty\bigr)\\
\geq
\frac 1 2 \biggl[ 4(1-\varepsilon)\rho - \frac {M^F_{mm}}N \biggl( 16 + 6(\varepsilon^{-1} - 1) \frac{M^F_{mm}}{\rho} \biggr) \biggr]
\Bigl( \mathcal F^N\bigl(m^N_t\bigr) - N\mathcal F(m_\infty) \Bigr) \\
+ \frac 12 I\bigl(m^N_t \big| m^N_\infty\bigr)
- \frac{dM^F_{mm}}{2\rho}\bigl(10 \rho + 3(\varepsilon^{-1} - 1) M^F_{mm} \bigr).
\end{multline*}
We conclude by applying Grönwall's lemma,
as in the end of the proof of \cref{thm:mf-entropy-convergence}.
\end{proof}

\section{Short-time behaviors and propagation of chaos}
\label{sec:short-time-poc}

Our proof of the main theorem on the uniform-in-time
propagation of chaos (\cref{thm:poc}) relies on the exponential convergence
in \cref{thm:mf-entropy-convergence,thm:ps-entropy-convergence},
where the initial conditions are required to have
finite entropy and finite Fisher information.
We aim to demonstrate in this section that
the non-linear kinetic Langevin dynamics exhibits
the same regularization effects in short time as the linear ones,
where the contributions from the non-linearity can be controlled.
We will first show the short-time Wasserstein propagation of chaos
using synchronous coupling.
Then we adapt the regularization results for the linear dynamics to our setting
and show that for measure initial values of finite second moment,
the entropy and the Fisher information are finite for the flow
at every positive time, where the short-time Wasserstein propagation of chaos
also plays a role.
Finally we combine all the estimates obtained to derive \cref{thm:poc}.

\subsection{Synchronous coupling}
\label{sec:synchronous-coupling}

We first show a lemma where synchronous coupling
is applied to general McKean--Vlasov diffusions.
This lemma is also used to justify the approximation arguments
in the proof of \cref{thm:mf-entropy-convergence,thm:ps-entropy-convergence}.

\begin{lem}
\label{lem:synchronous-coupling}
Let \(T > 0\) and
\(\beta\), \(\beta' : [0,T] \times \mathcal P_2(\mathbb R^{d}) \times \mathbb R^{d} \to \mathbb R^d\)
be measurable and uniformly Lipschitz continuous in the last two variables
and \(\sigma\) be a \(d \times d\) real matrix.
Suppose the integral \(\int_0^T (\lvert\beta(t,\delta_0,0)\rvert
+ \lvert\beta'(t,\delta_0,0)\rvert) dt\) is finite.
Let \((Z_t)_{t \in [0,T]}\), \((Z'_t)_{t \in [0,T]}\) be respective solutions to
\begin{align*}
dZ_t &= \beta\bigl(t, \Law(Z_t), Z_t\bigr) dt + \sigma dW_t, \\
dZ'_t &= \beta'\bigl(t, \Law(Z'_t), Z'_t\bigr) dt + \sigma dW'_t,
\end{align*}
where \(W\), \(W'\) are \(d\)-dimensional Brownians.
If there exist constants \(M_m\), \(M_z\)
and a progressively measurable \(\delta : \Omega \times [0,T] \to \mathbb R\)
such that for every \(t \in [0,T]\),
every \(m\), \(m' \in \mathcal P_2(\mathbb R^d)\)
and every \(x\), \(x' \in \mathbb R^d\),
\begin{equation}
\label{eq:synchronous-coupling-bound}
\lvert\beta(t,m,Z_t) - \beta'(t,m',Z'_t)\rvert
\leq M_m W_2(m, m') + M_z |Z_t - Z'_t| + \delta_t
\end{equation}
almost surely,
then for every \(t \in [0,T]\),
\begin{multline*}
W^2_2\bigl(\Law (Z_t), \Law (Z'_t)\bigr)
\leq e^{ (2M_m + 2M_z) t + 1 } W^2_2\bigl(\Law (Z_0), \Law (Z'_0)\bigr) \\
+ e^2t \int_0^t e^{(2M_m + 2M_z) (t - s)} \Expect\bigl[\delta_s^2\bigr] ds.
\end{multline*}
\end{lem}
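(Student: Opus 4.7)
The plan is to use a synchronous coupling: I would realize both McKean--Vlasov SDEs on a common probability space with the \emph{same} Brownian motion, i.e.\ replace \(W'\) by \(W\), and I would take \((Z_0, Z'_0)\) to be an \(L^2\)-optimal coupling for \(W_2\bigl(\Law(Z_0), \Law(Z'_0)\bigr)\). Since only the joint law matters for the conclusion and the individual laws are unaffected by this choice, this is legitimate; moreover, as both SDEs have the same linear noise \(\sigma dW_t\), the Brownian increments cancel in the dynamics of the difference:
\[
d(Z_t - Z'_t) = \bigl( \beta(t, \Law(Z_t), Z_t) - \beta'(t, \Law(Z'_t), Z'_t) \bigr) dt.
\]

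Set \(u(t) = \Expect\bigl[|Z_t - Z'_t|^2\bigr]\). Itô's formula (here just the chain rule, as the noise has disappeared) together with the hypothesis \cref{eq:synchronous-coupling-bound} and Cauchy--Schwarz gives
\[
u'(t) \leq 2\Expect\bigl[|Z_t-Z'_t| \bigl(M_m W_2(\Law(Z_t),\Law(Z'_t)) + M_z|Z_t-Z'_t| + \delta_t\bigr)\bigr].
\]
Since the constructed joint law is a coupling, \(W_2^2(\Law(Z_t),\Law(Z'_t)) \leq u(t)\), and \(\Expect[|Z_t - Z'_t|] \leq \sqrt{u(t)}\), so that
\[
u'(t) \leq (2M_m + 2M_z)\, u(t) + 2\sqrt{u(t)}\,\sqrt{\Expect[\delta_t^2]}.
\]

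To linearize the square root, I would regularize by \(v_\varepsilon(t) = \sqrt{u(t) + \varepsilon}\), which is absolutely continuous, and compute \(v_\varepsilon'(t) = u'(t)/(2 v_\varepsilon(t)) \leq (M_m + M_z) v_\varepsilon(t) + \sqrt{\Expect[\delta_t^2]}\). The classical Grönwall lemma then yields
\[
v_\varepsilon(t) \leq e^{(M_m + M_z)t} v_\varepsilon(0) + \int_0^t e^{(M_m+M_z)(t-s)} \sqrt{\Expect[\delta_s^2]}\, ds,
\]
and passing to the limit \(\varepsilon \downarrow 0\) gives the same bound with \(v_\varepsilon\) replaced by \(\sqrt{u}\).

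Finally I would square, applying \((a+b)^2 \leq 2a^2 + 2b^2\) and the Cauchy--Schwarz inequality to the integral term, obtaining
\[
u(t) \leq 2 e^{(2M_m+2M_z)t} u(0)
+ 2 t \int_0^t e^{(2M_m+2M_z)(t-s)} \Expect[\delta_s^2]\, ds.
\]
Since \(W_2^2(\Law(Z_t),\Law(Z'_t)) \leq u(t)\) while \(u(0) = W_2^2(\Law(Z_0),\Law(Z'_0))\) by our choice of initial coupling, and since \(2 \leq e\) and \(2 \leq e^2\), the announced inequality follows. The main (mild) obstacle is the differentiation of the square root at points where \(u\) vanishes, which is why I introduce the \(\varepsilon\)-regularization; apart from that the argument is a direct Grönwall computation.
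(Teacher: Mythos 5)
Your proposal is correct and takes essentially the same approach as the paper: synchronous coupling with a shared Brownian motion and an \(L^2\)-optimal coupling of the initial conditions, followed by Grönwall for \(u(t) = \Expect[|Z_t - Z'_t|^2]\). The only technical difference is in absorbing the cross term \(2\Expect[|Z_t-Z'_t|\delta_t]\): you pass to \(\sqrt{u}\) (with an \(\varepsilon\)-regularization) and then square, whereas the paper stays with \(u\) via a weighted Young inequality \(2|Z_s-Z'_s|\delta_s \leq t^{-1}|Z_s-Z'_s|^2 + t\delta_s^2\); both yield the stated bound, and your constants (\(2\) in place of \(e\) and \(e^2\)) are in fact marginally sharper.
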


\begin{proof}
From the uniformly Lipschitz continuity of \(b\) and \(b'\)
we have the uniqueness in law and the existence of strong solution
for both diffusions.
So we can construct \((Z_t,Z'_t)_{t \in [0,T]}\)
such that they share the same Brownian motion
and satisfy
\[
\Expect \bigl[ |Z_0 - Z'_0 |^2 \bigr]
= W_2^2 \bigl(\Law(Z_0), \Law(Z'_0) \bigr).
\]
Consequently,
\[
d (Z_s - Z'_s) =
\bigl[ b\bigl(s, \Law(Z_t), Z_s\bigr)
- b'\bigl(s, \Law(Z'_s), Z'_s\bigr) \bigr] dt
\]
and by Itō's formula,
\[
d |Z_s - Z'_s|^2 =
2 (Z_s - Z'_s) \cdot \bigl[ \beta\bigl(s, \Law(Z_s), Z_s\bigr)
- \beta'\bigl(s, \Law(Z'_s), Z'_s\bigr) \bigr] ds.
\]
By \cref{eq:synchronous-coupling-bound} we have
\begin{multline*}
\bigl| \beta\bigl(s, \Law(Z_s), Z_s\bigr) - \beta'\bigl(s, \Law(Z_s), Z_s\bigr) \bigr|
\leq M_m W_2 \bigl(\Law(Z_s), \Law(Z'_s)\bigr) \\ + M_z |Z_s - Z'_s| + \delta_s
\leq M_m \Expect \bigl[ |Z_s - Z'_s|^2 \bigr]^{1/2} + M_z |Z_s - Z'_s| + \delta_s.
\end{multline*}
Hence
\[
\frac 12d |Z_s - Z'_s|^2 \leq M_z |Z_s - Z'_s|^2 + M_m \Expect \bigl[ |Z_s - Z'_s|^2 \bigr]^{1/2} |Z_s - Z'_s| + |Z_s - Z'_s| \delta_s.
\]
By Cauchy--Schwarz,
\[
d |Z_s - Z'_s|^2
\leq (2M_z + M_m + t^{-1}) |Z_s - Z'_s|^2
+ M_m \Expect \bigl[ |Z_s - Z'_s|^2 \bigr]
+ t \delta_s^2.
\]
Taking expectations on both sides
and applying Grönwall's lemma,
we obtain
\begin{multline*}
\Expect \bigl[ |Z_t - Z'_t|^2 \big]
= e^{2(M_z + M_m)t + 1} \Expect \bigl[ |Z_0 - Z'_0|^2 \big]
+ t \int_0^t e^{2(M_z + M_m + t^{-1}) (t - s)} \Expect\bigl[\delta_s^2\bigr] ds \\
\leq e^{2(M_z + M_m)t + 1} \Expect \bigl[ |Z_0 - Z'_0|^2 \big]
+ e^2t \int_0^t e^{2(M_z + M_m) (t - s)} \Expect\bigl[\delta_s^2\bigr] ds,
\end{multline*}
from which the desired inequality follows.
\end{proof}

Since the finite-time propagation of chaos does not depend on the
gradient structure of the diffusions,
we introduce a more general setting.
Let \(b : \mathcal P_2(\mathbb R^{2d}) \times \mathbb R^d \times \mathbb R^d \to \mathbb R\) be a mapping that is Lipschitz in space and velocity:
there exist positive constants \(M^b_x\), \(M^b_v\) such that
\begin{multline}
\label{eq:b-lip}
\forall x,x',v,v' \in \mathbb R^d,~\forall m \in \mathcal P_2(\mathbb R^{2d}), \\
\lvert b(m,x,v) - b(m,x',v')\rvert
\leq M^b_x |x - x'| + M^b_v |v - v'|.
\end{multline}
We suppose also that the functional derivatives
\(\frac{\delta b}{\delta m}, \frac{\delta^2b}{\delta m^2}\) exist
with the following bounds:
there exist positive constants \(M^b_m\), \(M^b_{mm}\) such that
\begin{equation}
\label{eq:b-first}
\forall m \in \mathcal P_2(\mathbb R^{2d}),~\forall z, z' \in \mathbb R^{2d},\quad
\lvert D_m b(m,z, z')\rvert_\textnormal{op} \leq M^b_{m},
\end{equation}
and
\begin{multline}
\label{eq:b-second}
\forall m,m' \in \mathcal P_2(\mathbb R^{2d}),~\forall z \in \mathbb R^{2d}, \\
\biggl| \iint
\biggl[ \frac{\delta^2 b}{\delta m^2} (m',z, z', z') -
\frac{\delta^2 b}{\delta m^2} (m',z, z', z'') \biggr] m(dz') m(dz'')
\biggr|
\leq M^b_{mm}.
\end{multline}
We consider the following mean field dynamics:
\begin{equation}
\label{eq:b-mf-sde}
\begin{aligned}
dX_t &= V_t dt, \\
dV_t &= b\bigl( \Law(X_t,V_t),X_t,V_t\bigr) dt + \sqrt 2 dW_t,
\end{aligned}
\end{equation}
and the corresponding particle system:
\begin{equation}
\label{eq:b-ps-sde}
\begin{aligned}
dX^i_t &= V^i_t dt, \\
dV^i_t &= b\bigl( \mu_{(\mathbf X_t, \mathbf V_t)}, X^i_t,V^i_t\bigr) dt
+ \sqrt 2 dW^i_t,
\quad\text{where $\mu_{(\mathbf X_t, \mathbf V_t)} = \frac 1N \sum_{i=1}^N \delta _{(X^i_t, V^i_t)}$,}
\end{aligned}
\end{equation}
and \(i = 1\), \(\ldots\,\), \(N\).
In both equations \(W_t\), \(W^i_t\) are standard Brownians and \((W^i_t)_{i=1}^N\) are independent from each other.
The dynamics \cref{eq:b-mf-sde,eq:b-ps-sde} are well defined globally in time
thanks to the Lipschitz continuity \cref{eq:b-lip}
and we denote by \(P\) and \(P^N\) the respective associated semigroups.
That is to say,
if \((X_t,V_t)\) solves \cref{eq:b-mf-sde} and \(\Law(X_0,V_0) = \mu\),
then \(P^*_t \mu \coloneqq \Law(X_t, V_t)\) and \((P_t f) (\mu) \coloneqq \bigl\langle f, (P_t)^* \mu \bigr\rangle\) for bounded measurable \(f : \mathbb R^{2d} \to \mathbb R\);
if \((X^i_t,V^i_t)_{i=1}^N\) solves \cref{eq:b-ps-sde} and \(\Law(\mathbf X_0,\mathbf V_0) = \mu^N\),
then \(\bigl(P^N_t\bigr)^{\!*} \mu^N \coloneqq \Law(\mathbf X_t, \mathbf V_t)\)
and \(\bigl(P^N_t f^N\bigr) (\mu^N) \coloneqq
\bigl\langle f^N, \bigl(P^N_t\bigr)^{\!*} \mu^N \bigr\rangle\) for bounded measurable \(f^N : \mathbb R^{2dN} \to \mathbb R\).
We also define the tensor product of the mean field semigroup:
\(\bigl(P^{\otimes N}_t f^N\bigr) (\mu)
\coloneqq \bigl\langle f^N, (P^*_t \mu)^{\otimes N}\bigr\rangle\).

Using the previous \cref{lem:synchronous-coupling} as a building block,
we now show the finite-time propagation of chaos result.

\begin{prop}[Finite-time propagation of chaos]
\label{prop:finite-w-poc}
Assume \(b\) satisfies \cref{eq:b-lip,eq:b-first,eq:b-second}
and let \(N \geq 2\).
Then there exist a positive constant \(C\)
depending on \(M^b_x\), \(M^b_v\), \(M^b_m\) and \(M^b_{mm}\) such that
for every \(m \in \mathcal P_2(\mathbb R^{2d})\) and \(m^N \in \mathcal P_2(\mathbb R^{2dN})\),
and every \(T \geq 0\),
\begin{equation}
\label{eq:finite-w-poc}
W_2^2 \Bigl( \bigl(P^N_T\bigr)^{\!*} m^N, (P_T^* m)^{\otimes N} \Bigr)
\leq C e^{CT} W_2^2\bigl(m^N, m^{\otimes N}\bigr) + C(e^{CT} - 1) (\Var m_0 + d).
\end{equation}
\end{prop}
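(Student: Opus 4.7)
The plan is a synchronous coupling argument combined with Itô calculus and Gronwall. On a single probability space I realize the $N$-particle system $(X^i_t, V^i_t)_{i=1}^N$ from \cref{eq:b-ps-sde} driven by Brownians $(W^i_t)_{i=1}^N$, together with $N$ independent copies $(\tilde X^i_t, \tilde V^i_t)_{i=1}^N$ of the mean field dynamics \cref{eq:b-mf-sde} driven by the \emph{same} Brownians, with the initial laws coupled optimally so that $\sum_{i=1}^N \Expect\bigl[\bigl|Z^i_0 - \tilde Z^i_0\bigr|^2\bigr] = W_2^2\bigl(m^N, m^{\otimes N}\bigr)$, writing $Z^i = (X^i, V^i)$ and $\tilde Z^i = (\tilde X^i, \tilde V^i)$. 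Because the driving noises cancel, each increment $d(Z^i_t - \tilde Z^i_t)$ contains no stochastic integral and the pathwise chain rule applies.

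Setting $\Phi(t) = \sum_{i=1}^N \Expect\bigl[\bigl|Z^i_t - \tilde Z^i_t\bigr|^2\bigr]$, the chain rule combined with $2ab \leq a^2 + b^2$ yields
\[
\Phi'(t) \leq C\,\Phi(t) + \Expect\biggl[\sum_{i=1}^N \lvert \Delta b^i_t \rvert^2\biggr],
\]
where $\Delta b^i_t = b(\mu_{\mathbf Z_t}, Z^i_t) - b(m_t, \tilde Z^i_t)$ and $m_t = \Law(\tilde Z^1_t)$. I decompose $\Delta b^i_t$ into three pieces: (i) a spatial Lipschitz contribution $b(\mu_{\mathbf Z_t}, Z^i_t) - b(\mu_{\mathbf Z_t}, \tilde Z^i_t)$, bounded by \cref{eq:b-lip} by a multiple of $\bigl|Z^i_t - \tilde Z^i_t\bigr|$; (ii) an empirical-coupling term $b(\mu_{\mathbf Z_t}, \tilde Z^i_t) - b(\tilde \mu_t, \tilde Z^i_t)$, bounded via \cref{eq:b-first} and the index-wise plan $W_1(\mu_{\mathbf Z_t}, \tilde \mu_t) \leq N^{-1} \sum_j \bigl|Z^j_t - \tilde Z^j_t\bigr|$, which after summing over $i$ contributes another multiple of $\Phi(t)$; and (iii) the empirical approximation error $\xi^i_t \coloneqq b(\tilde \mu_t, \tilde Z^i_t) - b(m_t, \tilde Z^i_t)$, where $(\tilde Z^j_t)_j$ are iid with law $m_t$ and $\tilde \mu_t$ is their empirical measure.

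The crucial step is to bound $\sum_i \Expect[\lvert\xi^i_t\rvert^2]$ by a constant independent of $N$ (times a variance factor). Taylor-expanding in measure,
\[
\xi^i_t = \int \frac{\delta b}{\delta m}(m_t, \tilde Z^i_t, z')(\tilde \mu_t - m_t)(dz') + R^i_t,
\]
where $R^i_t$ involves $\frac{\delta^2 b}{\delta m^2}(m_\theta, \tilde Z^i_t, \cdot, \cdot)$ integrated against $(\tilde\mu_t - m_t)^{\otimes 2}$ with $m_\theta = m_t + \theta(\tilde\mu_t - m_t)$. Conditioning on $\tilde Z^i_t$, the linear term is an empirical average of centered iid variables whose variance is $O\bigl((M^b_m)^2 \Var m_t / N\bigr)$ thanks to \cref{eq:b-first}. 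For the remainder $R^i_t$, expanding $(\tilde\mu_t - m_t)^{\otimes 2}$ and using the iid structure of $(\tilde Z^j_t)_j$ reduces the expectation to an integrand of exactly the ``diagonal minus off-diagonal'' form controlled uniformly in $m_\theta$ by \cref{eq:b-second}, giving an $O(M^b_{mm}/N)$ bias plus a residual mean-zero fluctuation of similar order. Summed over $i$, this yields $\sum_{i=1}^N \Expect\bigl[\lvert \xi^i_t \rvert^2\bigr] \leq C(\Var m_t + d)$.

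A separate Itô estimate on $\lvert \tilde Z^1_t \rvert^2$ using \cref{eq:b-lip} controls $\Var m_t \leq C(\Var m_0 + d) e^{Ct}$, so $\Phi'(t) \leq C \Phi(t) + C(\Var m_0 + d)e^{Ct}$, and Gronwall's inequality then delivers \cref{eq:finite-w-poc} with the claimed constants. The main obstacle is the second-order analysis of $R^i_t$: the assumption \cref{eq:b-second} controls only one particular integrated combination of $\delta^2 b/\delta m^2$ rather than a pointwise norm, and extracting the $N^{-1}$ gain requires carefully expanding $(\tilde\mu_t - m_t)^{\otimes 2}$, isolating the diagonal-minus-off-diagonal contribution, and handling the random integration point $m_\theta$. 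Without this refinement one would be left with $\sum_i \Expect\bigl[\lvert\xi^i_t\rvert^2\bigr] = O(N)$ and a bound that is vacuous in the large-$N$ limit.
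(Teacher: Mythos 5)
Your overall strategy—synchronous coupling with the same Brownians, a Grönwall estimate on $\Phi(t) = \sum_i \Expect[|Z^i_t - \tilde Z^i_t|^2]$, a three-part decomposition of the drift error, and the crucial $O(1/N)$ concentration for the empirical-versus-law term via assumptions \cref{eq:b-first,eq:b-second}—is the same as the paper's, which invokes \cref{lem:synchronous-coupling} to package the coupling and Grönwall steps. Your variance bound on $\Var m_t$ and the final assembly also match.

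The genuine gap is in how you propose to bound $\sum_i \Expect[|\xi^i_t|^2]$. You expand $\phi(\tilde\mu_t)-\phi(m_t)$ to second order in measure, writing $\xi^i_t = \text{(linear)} + R^i_t$, and then need $\Expect[|R^i_t|^2] = O(1/N)$. But $R^i_t$ is a bilinear functional of $\tilde\mu_t - m_t$ evaluated at a random intermediate measure $m_\theta$, so $\Expect[|R^i_t|^2]$ is a \emph{fourth} moment of the empirical fluctuation. Assumption \cref{eq:b-second} controls the \emph{first} moment of the bilinear form (which is what yields the $O(M^b_{mm}/N)$ bias), not its second moment; the latter would require either a pointwise bound on $\nabla_z\frac{\delta^2 b}{\delta m^2}$ or a Fournier--Guillin estimate on $\Expect[W_1^4(\tilde\mu_t, m_t)]$, both of which you have correctly ruled out as either unavailable or dimension-dependent. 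The paper avoids this trap entirely: \cref{lem:convergence-empirical} bounds $\Expect[|\phi(\mu_{\boldsymbol\xi}) - \phi(m)|^2]$ by decomposing it into $\Var\phi(\mu_{\boldsymbol\xi}) + |\Expect\phi(\mu_{\boldsymbol\xi}) - \phi(m)|^2$, then applies Efron--Stein (using only \cref{eq:b-first}) to the variance and the bias estimate of Tse \cite{tse2019quantitative} (using \cref{eq:b-second}) to the squared bias. This sidesteps any fourth-moment requirement. You should replace your Taylor-remainder analysis with this variance-plus-bias decomposition; as sketched, the remainder bound does not close. A secondary, more cosmetic difference: the paper evaluates the concentration term against the leave-one-out empirical measure $\mu_{\tilde{\mathbf Z}'^{-i}_t}$ and pays for the swap to the full empirical measure separately (its term (II)), whereas your $\tilde\mu_t$ contains the evaluation point $\tilde Z^i_t$, introducing a self-dependence that forces the conditional computation to carry an extra fixed atom; this is handleable but the leave-one-out formulation is cleaner.
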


\begin{proof}
We apply \cref{lem:synchronous-coupling} with
\begin{align*}
\beta^i (t, m^N, \mathbf z) &= \bigl( v^i, b(\mu_{(\mathbf x, \mathbf v)}, x^i, v^i) \bigr)^\mathsf{T}, \\
\beta'^i (t, m^N, \mathbf z) &= \bigl( v^i, b(P^*_tm, x^i, v^i) \bigr)^\mathsf{T}, \\
\delta^2_t &\coloneqq \sum_{i=1}^N \bigl|\delta^i_t\bigr|^2 \coloneqq
\sum_{i=1}^N \bigl| b\bigl(\mu_{(\mathbf X_t, \mathbf V_t)}, X^i_t, V^i_t\bigr)
- b\bigl(P^*_t m, X^i_t, V^i_t\bigr) \bigr|^2,
\end{align*}
and \(M_z \coloneqq \sqrt 2 M^b_x \vee \sqrt{ 2 (M^b_v)^2 + 1}\), \(M_m \coloneqq 0\).
We then obtain
\[
W_2^2\Bigl( \bigl(P^N_t\bigr)^{\!*} m^N, (P_t^* m)^{\otimes N} \Bigr)
\leq e^{2M_z t + 1} W_2^2\bigl(m^N_0, m_0^{\otimes N}\bigr)
+ e^2t \int_0^t e^{2M_z (t - s)} \Expect\bigl[ \delta_t^2\bigr] ds.
\]
So it remains to bound \(\Expect[\delta_t^2]\).
By enlarging the underlying probability space,
we construct the random variable
\(\tilde{\mathbf Z}_t = \bigl(\tilde{\mathbf X}'_t, \tilde{\mathbf V}'_t\bigr)
\sim (P_t^* m)^{\otimes N} \)
such that
\[
\sum_{i=1}^N \Expect \Bigl[\bigl|Z^i_t - {\tilde Z}'^{i}_t\bigr|^2\Bigr]
= W_2^2 \Bigl(\bigl(P^N_t\bigr)^{\!*} m^N, (P_t^* m)^{\otimes N} \Bigr).
\]
This implies in particular
\begin{equation}
\label{eq:bound-between-empirical}
\Expect\bigl[W_2^2\bigl(\mu_{(\mathbf X_t, \mathbf V_t)}, \mu_{(\tilde{\mathbf X}'_t, \tilde{\mathbf V}'_t)}\bigr) \bigr]
\leq \frac 1N W_2^2\bigl(m^N_t, m_t^{\otimes N}\bigr).
\end{equation}
For each \(i\), we decompose
\begin{multline*}
\delta^i_t
= \Bigl( b \bigl( P_t^* m, Z^i_t\bigr)
- b\bigl( \mu_{\tilde{\mathbf Z}'^{-i}_t}, Z^i_t\bigr)\Bigr)
+ \Bigl( b\bigl( \mu_{\tilde{\mathbf Z}'^{-i}_t}, Z^i_t\bigr)
- b\bigl( \mu_{\tilde{\mathbf Z}'_t}, Z^i_t\bigr)\Bigr) \\
+ \Bigl( b\bigl( \mu_{\tilde{\mathbf Z}'_t}, Z^i_t\bigr)
- b\bigl( \mu_{\mathbf Z_t}, Z^i_t\bigr)\Bigr)
\eqqcolon \text{(I)} + \text{(II)} + \text{(III)}.
\end{multline*}
According to the assumption \cref{eq:measure-third}
we can apply \cref{lem:convergence-empirical} to the first term and obtain
\[
\Expect \bigl[\text{(I)}^2\bigr]
= \Expect\Bigl[ \Expect\bigl[ \text{(I)}^2 \big| Z^i_t \bigr] \Bigr]
\leq \frac{\bigl(M^b_{m}\bigr)^2 \Var m_t}{N - 1} + \frac{M^b_{mm}}{4(N - 1)^2}.
\]
We then bound the second term by the \(M^b_{m}\)-Lipschitz continuity:
\begin{multline*}
\Expect\bigl[ \text{(II)}^2 \bigr] \leq
\bigl(M^b_{m}\bigr)^2 \Expect\bigl[ W_2^2\bigl(\mu_{\tilde{\mathbf Z}'^{-i}_t},
\mu_{\tilde{\mathbf Z}'_t}\bigr) \bigr] \\
\leq \frac{\bigl(M^b_{m}\bigr)^2}{N(N-1)}
\sum_{j : j \neq i} \Expect \Bigl[\bigl|\tilde Z'^{j}_t - \tilde Z'^{i}_t\bigr|^2\Bigr]
= \frac{2\bigl(M^b_{m}\bigr)^2}{N} \Var P_t^*m.
\end{multline*}
Finally by \cref{eq:bound-between-empirical},
we have
\[
\Expect\bigl[\text{(III)}^2 \bigr]
\leq (M^b_{m})^2 \Expect \bigl[ W_2^2 \big(\mu_{\mathbf Z_t}, \mu_{\tilde{\mathbf Z}'_t}\bigr) \bigr]
\leq \frac{\bigl(M^b_{m}\bigr)^2}{N}
W_2^2\Bigl(\bigl(P^N_t\bigr)^{\!*} m^N, (P_t^* m)^{\otimes N}\Bigr).
\]
Hence
\[
\Expect\bigl[\delta^2_t\bigr]
= \sum_{i=1}^N \Expect \bigl[ |\delta^i_t|^2 \bigr]
\leq C \Bigl[ 1 + \Var m_t
+ W_2^2\Bigl(\bigl(P^N_t\bigr)^{\!*} m^N, (P_t^* m)^{\otimes N}\Bigr) \Bigr]
\]
for some constant \(C = C\bigl(M^b_{m}, M^b_{mm}\bigr)\).
By Itō's formula the variance \(\Var m_t\) satisfies
\begin{multline*}
\frac{d}{dt} \Var m_t
= 2 \bigl(\Expect\bigl[\tilde V_t^2\bigr]
- \Expect[\tilde V_t]^2\bigr)
+ 2\Expect\bigr[\tilde X_t \cdot b(P_t^* m ,\tilde Z_t)\bigl] \\
- 2\Expect[\tilde X_t] \cdot \Expect\bigl[b(P_t^* m,\tilde Z_t)\bigr] + 2d
\leq C' (\Var m_t + d)
\end{multline*}
for some \(C' = C'\bigl(M^b_{z}, M^b_{m}\bigr)\).
Then Grönwall's lemma yields
\(\Var m_t \leq e^{C't} \Var m_0 + (e^{C't} - 1)d\).
Upon redefining the constants, we obtain for every \(t \geq 0\),
\begin{multline*}
W_2^2\Bigl(\bigl(P^N_t\bigr)^{\!*} m^N, (P_t^* m)^{\otimes N}\Bigr)
\leq e^{2M_{z} t + 1} W_2^2\bigl(m^N, m^{\otimes N}\bigr) \\
+ C t \int_0^t e^{2M_{z} (t - s)}
\Bigl[
W_2^2\Bigl(\bigl(P^N_s\bigr)^{\!*} m^N, (P_s^* m)^{\otimes N}\Bigr)
+ e^{Cs} (\Var m_0 + d) \Bigr] ds.
\end{multline*}
We then conclude by applying the integral version of Grönwall's lemma.
\end{proof}

\subsection{From Wasserstein metric to entropy}
\label{sec:log-harnack}

We study in this section a logarithmic Harnack's inequality for
kinetic McKean--Vlasov dynamics and the corresponding particle system.
This inequality then implies the regularization from Wasserstein to entropy.

\begin{lem}[Log-Harnack inequality for propagation of chaos]
\label{lem:log-harnack-poc}
Assume \(b\) satisfies \cref{eq:b-lip,eq:b-first,eq:b-second}
and let \(N \geq 2\).
Then there exist a positive constant \(C\)
depending on \(M^b_x\), \(M^b_v\), \(M^b_m\) and \(M^b_{mm}\) such that
for every \(m \in \mathcal P_2(\mathbb R^{2d})\) and \(m^N \in \mathcal P_2(\mathbb R^{2dN})\),
every measurable function \(f^N : \mathbb R^{2dN} \to (0, +\infty)\) that is lower bounded away from \(0\) and upper bounded,
and every \(T > 0\),
\begin{multline}
\label{eq:log-harnack-poc}
\bigl(P^N_T \log f^N\bigr) (m^N)
\leq \log \bigl(P_T^{\otimes N} f^N\bigr) (m)
+ C \biggl( \frac{1}{(T \wedge 1)^3} + e^{CT} \biggr) W_2^2\bigl(m^N, m^{\otimes N}\bigr)\\
+ C(e^{CT} - 1)(\Var m + d).
\end{multline}
Consequently,
\begin{multline}
\label{eq:reverse-t2}
H \Bigl( \bigl(P^N_T\bigr)^{\!*} m^N \Big| (P^*_T m)^{\otimes N} \Bigr)
\leq C \biggl( \frac{1}{(T \wedge 1)^3} + e^{CT} \biggr)
W_2^2\bigl(m^N, m^{\otimes N}\bigr) \\
+ C(e^{CT} - 1)(\Var m + d).
\end{multline}
\end{lem}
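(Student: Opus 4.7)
The plan is to apply the coupling by change of measure method of Ren--Wang \cite{ren2021exponential} to transport the tensor-product mean field flow into the particle-system flow while ending at the same terminal state. On a common probability space $(\Omega,\mathcal F,\mathbb P)$, take independent Brownians $(\tilde W^i)_{i=1}^N$ and initial data $(\mathbf Z_0,\tilde{\mathbf Z}_0)$ distributed according to an $L^2$-optimal coupling of $m^N$ and $m^{\otimes N}$, and let $\tilde{\mathbf Z}_t$ consist of $N$ independent mean field solutions of \cref{eq:b-mf-sde} driven by $\tilde W^i$, so that $\tilde{\mathbf Z}_T\sim(P_T^*m)^{\otimes N}$. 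Introduce the cubic Hermite interpolants $\alpha(t)=1-3(t/T)^2+2(t/T)^3$ and $\beta(t)=t(1-t/T)^2$, which satisfy $\alpha(0)=\beta'(0)=1$ and $\alpha(T)=\beta(T)=\alpha'(0)=\alpha'(T)=\beta'(T)=0$, and define
\[
X^i_t=\tilde X^i_t+\alpha(t)\Delta X^i_0+\beta(t)\Delta V^i_0,\qquad V^i_t=\tilde V^i_t+\alpha'(t)\Delta X^i_0+\beta'(t)\Delta V^i_0,
\]
with $\Delta Z^i_0\coloneqq Z^i_0-\tilde Z^i_0$. By construction $\dot X^i_t=V^i_t$, $\Law_{\mathbb P}(\mathbf Z_0)=m^N$, and $\mathbf Z_T=\tilde{\mathbf Z}_T$.

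Differentiating $V^i_t$ gives $dV^i_t=[b(P_t^*m,\tilde Z^i_t)+\alpha''(t)\Delta X^i_0+\beta''(t)\Delta V^i_0]\,dt+\sqrt 2\,d\tilde W^i_t$, so setting
\[
\psi^i_t\coloneqq b(P_t^*m,\tilde Z^i_t)-b(\mu_{\mathbf Z_t},Z^i_t)+\alpha''(t)\Delta X^i_0+\beta''(t)\Delta V^i_0
\]
and $\frac{d\mathbb Q}{d\mathbb P}=\exp\bigl(-\tfrac{1}{\sqrt 2}\sum_i\int_0^T\psi^i\cdot d\tilde W^i-\tfrac14\sum_i\int_0^T|\psi^i|^2\,dt\bigr)$, Girsanov's theorem (justified by truncation of $\psi^i$) yields that under $\mathbb Q$ the process $\mathbf Z$ solves the $N$-particle SDE \cref{eq:b-ps-sde} with initial law $m^N$, and in particular $\Law_{\mathbb Q}(\mathbf Z_T)=(P_T^N)^*m^N$. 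Since $\mathbf Z_T=\tilde{\mathbf Z}_T$, the Donsker--Varadhan variational formula applied to $\phi=\log f^N(\tilde{\mathbf Z}_T)$ gives
\[
(P_T^N\log f^N)(m^N)=\Expect_{\mathbb Q}\bigl[\log f^N(\tilde{\mathbf Z}_T)\bigr]\leq\log(P_T^{\otimes N}f^N)(m)+H(\mathbb Q|\mathbb P),
\]
and the entropic form \cref{eq:reverse-t2} follows by selecting $f^N$ to be a bounded truncation of the Radon--Nikodym density of $(P_T^N)^*m^N$ with respect to $(P_T^*m)^{\otimes N}$ and passing to the limit.

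It remains to bound $H(\mathbb Q|\mathbb P)=\tfrac14\Expect_{\mathbb Q}\bigl[\sum_i\int_0^T|\psi^i|^2\,dt\bigr]$, which splits into three contributions. The Hermite-shift part, using $|\alpha''|\leq C/T^2$ and $|\beta''|\leq C/T$ together with the $W_2$-optimality of the initial coupling, is at most $\tfrac{C}{(T\wedge1)^3}W_2^2(m^N,m^{\otimes N})$. The spatial-Lipschitz term $b(P_t^*m,\tilde Z^i_t)-b(P_t^*m,Z^i_t)$ is bounded by $C|\Delta Z^i_0|$ uniformly in $t$ by \cref{eq:b-lip} and hence integrates to at most $Ce^{CT}W_2^2(m^N,m^{\otimes N})$. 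The measure-part $b(P_t^*m,Z^i_t)-b(\mu_{\mathbf Z_t},Z^i_t)$ is handled exactly as in the proof of \cref{prop:finite-w-poc}: on an enlargement of the probability space introduce under $\mathbb Q$ an independent tensor copy with marginal $(P_t^*m)^{\otimes N}$, apply the triangle inequality together with \cref{eq:b-first,eq:b-second}, and invoke \cref{prop:finite-w-poc} under $\mathbb Q$ from time $0$ to each $t\leq T$; after a Grönwall loop this yields a contribution of order $C(e^{CT}-1)(\Var m+d)$. The main obstacle will be precisely this last piece: one must carry out the argument of \cref{prop:finite-w-poc} \emph{inside} the expectation under $\mathbb Q$, exploiting the identity $\Law_{\mathbb Q}(\mathbf Z_t)=(P^N_t)^*m^N$ at every $t\in[0,T]$ in order to feed the finite-time propagation-of-chaos estimate into the Grönwall loop. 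A secondary technicality is the Novikov-type justification of the Girsanov transformation when $\psi^i$ is not a priori bounded, resolved by truncation followed by a monotone passage to the limit.
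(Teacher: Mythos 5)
Your proposal is correct and takes essentially the same route as the paper's proof: a cubic Hermite bridge constructed synchronously with the tensorized mean field flow, a Girsanov coupling by change of measure in the spirit of Ren--Wang, the Donsker--Varadhan inequality applied to \(\log f^N\), and the finite-time propagation of chaos of \cref{prop:finite-w-poc} (invoked under \(\mathbb Q\), since \(\Law_{\mathbb Q}(\mathbf Z_t)=(P^N_t)^*m^N\)) to control the drift discrepancy. One small imprecision worth flagging: the spatial-Lipschitz piece \(b(P_t^*m,\tilde Z^i_t)-b(P_t^*m,Z^i_t)\) is \emph{not} bounded by \(C|\Delta Z^i_0|\) uniformly in \(t\) when \(T\) is small, because \(|V^i_t-\tilde V^i_t|\) inherits a factor of order \(1/T\) from \(\alpha'(t)\); the resulting extra \(1/T\) in \(\int_0^T|\psi^i_t|^2\,dt\) is nonetheless harmlessly absorbed by the \((T\wedge 1)^{-3}\) prefactor already present from the \(\alpha''\), \(\beta''\) terms, so the stated bound is unaffected.
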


\begin{proof}
Let us first prove the log-Harnack inequality \cref{eq:log-harnack-poc}
for compactly supported \(m\) and \(m^N\).

\proofstep{Constructing a bridge}
Fix \(T > 0\) and let \((\tilde X^i_t, \tilde V^i_t)_{i=1}^N\) be \(N\) independent duplicates of the solution to \cref{eq:b-mf-sde}
with the initial condition \(\Law(\tilde X^i_0, \tilde V^i_0) = m\)
for \(i = 1\), \(\ldots\,\), \(N\).
We denote the \(N\)-independent Brownians by \(\tilde W^i_t\).
By enlarging the underlying probability space, we construct random variables \(\mathbf X_0, \mathbf V_0\) such that
\[
\sum_{i=1}^N \Expect \Bigl[ \bigl|X^i_0 - \tilde X^i_0\bigr|^2
+ \bigl|V^i_0 - \tilde V^i_0\bigr|^2 \Bigr]
= W_2^2\bigl(m^N, m^{\otimes N}\bigr).
\]
Define for \(i = 1\), \(\ldots\,\), \(N\) the stochastic processes
\begin{align}
dX^i_t &= V^i_t dt, \\
dV^i_t &= \biggl( b\bigl( P^*_t m, \tilde X^i_t, \tilde V^i_t\bigr)
- \frac{V^i_0 - \tilde V^i_0}{T}
+ \frac{d}{dt} \bigl(t(T-t)\bigr) v^i \biggr) dt + \sqrt 2 d\tilde W^i_t,
\end{align}
where
\begin{equation}
v^i \coloneqq \frac{6}{T^3} \biggl( - \bigl(X^i_0 - \tilde X^i_0\bigr)
+ \frac{T}{2} \bigl(V^i_0 - \tilde V^i_0\bigr) \biggr).
\end{equation}
The difference processes \(\bigl(X^i_t - \tilde X^i_t, V^i_t - \tilde V^i_t\bigr)\)
satisfy
\begin{align*}
d\bigl(V^i_t - \tilde V^i_t\bigr)
&= - \frac{V^i_0 - \tilde V^i_0}{T} dt + d\bigl(t(T-t)\bigr) v^i, \\
d\bigl(X^i_t - \tilde X^i_t\bigr)
&= \frac{t - T}{T}\bigl(V^i_0 - \tilde V^i_0\bigr) dt + t(T-t) v^i dt,
\end{align*}
so that
\begin{align}
V^i_t - \tilde V^i_t &= \frac{T-t}{T}\bigl(V^i_0 - \tilde V^i_0\bigr)
+ \frac{6t(T-t)}{T^3}
\biggl( -\bigl(X^i_0 - \tilde X^i_0\bigr)
+ \frac{T}{2} \bigl(V^i_0 - \tilde V^i_0\bigr) \biggr), \\
X^i_t - \tilde X^i_t &= - \frac{t(t-T)^2}{T}\bigl(V^i_0 - \tilde V^i_0\bigr)
+ \frac{T^3 - 3 T t^2 + 2t^3}{T^3}\bigl(X^i_0 - \tilde X^i_0\bigr).
\end{align}
In particular \(X^i_T = \tilde X^i_T\) and \(V^i_T = \tilde V^i_T\).

\proofstep{Change of measure}
Define
\[
\xi^i_t \coloneqq \frac 1{\sqrt 2} \biggl(
b\bigl(P^*_t m,\tilde X^i_t, \tilde V^i_t\bigr)
- b\bigl(\mu_{(\mathbf X_t, \mathbf V_t)}, X^i_t, V^i_t\bigr)
+ \frac{\tilde V^i_0 - V^i_0}{T} + \frac{d}{dt}\bigl(t(T-t)\bigr) v^i \biggr)
\]
and \(\delta b^i_t \coloneqq
b\bigl( P^*_t m, Z^i_t \bigr) - b\bigl( \mu_{\mathbf Z_t}, Z^i_t\bigr)\).
It satisfies
\begin{equation}
\label{eq:xi-bound}
\bigl|\xi^i_t\bigr|
\leq C \bigl|\delta b^i_t\bigr|
+ C \biggl( M^b_x + \frac{M^b_v}{T} + \frac 1{T^2} \biggr)
\bigl(\bigl|X^i_0 - \tilde X^i_0\bigr|
+ T \bigl|V^i_0 - \tilde V^i_0\bigr|\bigr)
\end{equation}
for some universal constant \(C\).
In the following \(C\) may change from line to line and
depend on the constants \(M^b_x\), \(M^b_v\), \(M^b_m\) and \(M^b_{mm}\).
Set \(W^i_\cdot \coloneqq \tilde W^i_\cdot + \int_0^\cdot \xi^i_t dt\) and

\[
R_\cdot \coloneqq \exp \biggl[ - \sum_{i=1}^N
\biggl( \int_0^\cdot \xi^i_t d\tilde W^i_t
+ \frac 12 \int_0^\cdot \bigl|\xi^i_s\bigr|^2 dt \biggr) \biggr]
\]
which is a local martingale.
Then \((X^i, V^i, W^i)\) solves \cref{eq:b-ps-sde}.
Since \(m\), \(m^N\) are both compactly supported,
\(\bigl|X^i_0 - \tilde X^i_0\bigr|\), \(\bigl|V^i_0 - \tilde V^i_0\bigr|\)
are bounded almost surely.
The difference in drift \(\delta b^i_t\)
has uniform linear growth in \(\mathbf X_t\), \(\mathbf V_t\),
and therefore uniform linear growth in \(\tilde{\mathbf X}_t\), \(\tilde{\mathbf V}_t\).
We then apply \cref{lem:girsanov-ui} in the appendix
to obtain that \(R_\cdot\) is really a martingale.
By Girsanov's theorem \(W^i_t\) are independent Brownians under the new probability \(\mathbb Q = R \mathbb P\).
Since \(\mathbf X_0\), \(\mathbf V_0\), \(\tilde{\mathbf X}_0\), \(\tilde{\mathbf V}_0\)
are independent from the Brownian motions we have
\[
\sum_{i=1}^N \Expect^{\mathbb Q}
\Bigl[ \bigl|X^i_0 - \tilde X^i_0\bigr|^2 + \bigl|V^i_0 - \tilde V^i_0\bigr|^2 \Bigr]
= W_2^2\bigl(m^N, m^{\otimes N}\bigr).
\]
Hence for measurable functions \(f^N : \mathbb R^{2dN} \to \mathbb R\) that are lower bounded away from \(0\) and upper bounded,
we have
\begin{align*}
\bigl(P^N_T \log f^N\bigr) (m^N)
&= \Expect \bigl[ R_T \log f^N (\mathbf X_T, \mathbf V_T) \bigr] \\
&\leq \Expect [R_T \log R_T] + \log \Expect \bigl[ f^N(\mathbf X_T, \mathbf V_T) \bigr] \\
&= \Expect [R_T \log R_T] + \log \Expect \bigl[ f^N(\tilde{\mathbf X}_T, \tilde{\mathbf V}_T) \bigr] \\
&= \Expect [R_T \log R_T] + \log\bigl(P_T^{\otimes N} f^N\bigr) (m).
\end{align*}
So it remains to bound \(\Expect [R_T \log R_T]\).
We observe
\begin{multline*}
\Expect [R_T \log R_T] = \Expect^{\mathbb Q} [\log R_T]
= \frac 12 \Expect^{\mathbb Q} \biggl[ \sum_{i=1}^N \int_0^T \bigl|\xi^i_t\bigr|^2 dt \biggr] \\
\leq CT \biggl( M^b_x + \frac{M^b_v}{T} + \frac 1{T^2} \biggr)^{\!2}
\Expect^{\mathbb Q} \Bigl[\bigl|X^i_0 - \tilde X^i_0\bigr|^2
+ T^2\bigl|V^i_0 - \tilde V^i_0\bigr|^2\Bigr]
+ C \Expect^{\mathbb Q} \biggl[ \int_0^T \sum_{i=1}^N\bigl|\delta b^i_t\bigr|^2 dt \biggr] \\
\leq \frac{C(T \vee 1)^3}{(T \wedge 1)^3} W_2^2\bigl(m^N, m^{\otimes N}\bigr)
+ C \Expect^{\mathbb Q} \biggl[ \int_0^T \sum_{i=1}^N \bigl|\delta b^i_t\bigr|^2 dt \biggr].
\end{multline*}
Arguing as in the proof of \cref{prop:finite-w-poc}, we have
\[
\sum_{i=1}^N \Expect^{\mathbb Q} \Bigl[\bigl|\delta b^i_t\bigr|^2\Bigr]
\leq C e^{Ct} \Bigl(W_2^2\bigl(m^N, m^{\otimes N}\bigr) + \Var m + d \Bigr),
\]
So the log-Harnack inequality \cref{eq:log-harnack-poc} is proved
for compactly supported \(m^N\) and \(m\).

\proofstep{Approximation}
Now treat general \(m^N\), \(m\) of finite second moment, but not necessarily compact supported.
Take two sequences \((m^N_k)_{k \in \mathbb N}, (m_k)_{k \in \mathbb N}\)
of compactly supported measures
such that \(m^N_k \to m^N\) and \(m_k \to m\) in respective topologies of \(\mathcal P_2\).
For continuous \(f^N\) such that \(\log f^N\) is bounded,
we have
\[
\bigl(P^N_t \log f^N\bigr) \bigl(m^N_k\bigr)
\to \bigl(P^N_t \log f^N\bigr) (m^N),
\qquad\bigl(P^{\otimes N}_t f^N\bigr) (m_k) \to\bigl(P^{\otimes N}_t f^N\bigr) (m)
\]
by the \(\mathcal P_2\)-continuities of \(\bigl(P^N_t\bigr)^{\!*}\) and \(P_t^*\).
So the log-Harnack inequality \cref{eq:log-harnack-poc} is shown for every continuous \(f^N\) which is both lower and upper bounded,
and for general \(m^N\) and \(m\) of finite second moment.
For a doubly bounded but not necessarily continuous \(f^N\) we take a sequence of continuous and uniformly bounded \((f^N_k)_{k \in \mathbb N}\) that converges to \(f^N\) in the \(\sigma(L^\infty, L^1)\) topology.
We have
\[
\bigl(P^N_t \log f^N_k\bigr) (m^N) \to \bigl(P^N_t \log f^N\bigr) (m^N),
\qquad \bigl(P^{\otimes N}_t f^N_k\bigr) (m) \to \bigl(P^{\otimes N}_t f^N\bigr) (m)
\]
since both \((P^N_t)^* m^N\) and \(P^*_t m\) are absolutely continuous with respect to the Lebesgue measure according to \cref{lem:wellposedness-regularity}.
So the desired inequality \cref{eq:log-harnack-poc} is shown in full generality.
Finally, to obtain \cref{eq:reverse-t2} we define another sequence
\[
g^N_k \coloneqq
\biggl(
\frac{\bigl(P^N_T\bigr)^{\!*} m^N}{(P^*_T m)^{\otimes N}}
\wedge k\biggr) \vee \frac 1k
\]
for \(k \in \mathbb N\).
We apply the Harnack's inequality \cref{eq:log-harnack-poc} to \(g^N_k\) and take the limit \(k \to +\infty\).
\end{proof}

Using the known results on log-Harnack inequalities
we can also obtain the regularization in the beginning of the dynamics.

\begin{prop}
\label{prop:w-to-h}
Assume \(F\) satisfies \cref{eq:lip}
and there exist probabilities \(m_\infty\), \(m^N_\infty\) satisfying \cref{eq:mf-invariant-measure,eq:ps-invariant-measure} respectively
and having finite exponential moments.
Let \(m_0\) (resp.\ \(m^N_0\)) be the initial value of
the mean field dynamics \cref{eq:mf-fp}
(resp.\ the particle system dynamics \cref{eq:ps-fp})
of finite second moment.
Then there exist a positive constant \(C\) depending
on \(M^F_{mm}\) and \(M^F_{mx}\) such that
for every \(t \in (0,1]\),
\begin{equation}
\label{eq:w-to-h}
H(m_t | m_\infty) \leq \frac{C}{t^3} W_2^2(m_0,m_\infty)
\quad\text{(resp.\ $H\bigl(m^N_t\big| m^N_\infty\bigr)
\leq \frac{C}{t^3} W_2^2\bigl(m^N_0, m^N_\infty\bigr)$).}
\end{equation}
\end{prop}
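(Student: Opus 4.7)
The plan is to establish a logarithmic Harnack inequality for each flow by coupling by change of measure, and then take the target measure to be the invariant one so that the Wasserstein distance from equilibrium controls the relative entropy. The particle system and the mean field case will be handled separately, since the former is an ordinary Markov diffusion with Lipschitz drift whereas the latter is genuinely non-linear.

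For the particle system, \eqref{eq:ps-sde} is a kinetic Langevin-type SDE on $\mathbb R^{2dN}$ with drift $(\mathbf v,\, -\mathbf v - \nabla U^N(\mathbf x))$ where $U^N(\mathbf x) = NF(\mu_{\mathbf x})$, and by \eqref{eq:D^2-U^N-bound} the gradient $\nabla U^N$ is $(M^F_{mx} + M^F_{mm})$-Lipschitz. Repeating the bridge construction and Girsanov argument of \cref{lem:log-harnack-poc} with both trajectories drawn from the same $N$-particle dynamics (rather than a particle system against a tensor of mean field copies) yields, for any $\mu$, $\nu \in \mathcal P_2(\mathbb R^{2dN})$ and $T \in (0, 1]$,
\begin{equation*}
H\bigl((P^N_T)^{\!*}\mu \bigm| (P^N_T)^{\!*}\nu\bigr)
\leq \frac{C}{T^3}\, W_2^2(\mu, \nu),
\end{equation*}
with $C$ depending only on $M^F_{mx}$ and $M^F_{mm}$ (the Lipschitz constants being $N$-free). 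Taking $\nu = m^N_\infty$ and using its invariance delivers the particle system bound in \eqref{eq:w-to-h}.

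For the mean field flow I would repeat the same coupling construction with $N = 1$, but now comparing two solutions of the McKean--Vlasov equation \eqref{eq:mf-sde} with different initial laws rather than two trajectories of a Markov diffusion. Let $(\tilde X_t, \tilde V_t)$ solve \eqref{eq:mf-sde} with $\Law(\tilde Z_0) = m_\infty$, driven by a Brownian $\tilde W$, and couple $(X_0, V_0) \sim m_0$ to $(\tilde X_0, \tilde V_0)$ so that $\Expect[|Z_0 - \tilde Z_0|^2] = W_2^2(m_0, m_\infty)$. Adding the same deterministic polynomial bridge correction to the drift of $(X_t, V_t)$ as in \cref{lem:log-harnack-poc}, one enforces $(X_T, V_T) = (\tilde X_T, \tilde V_T)$ almost surely; Girsanov's theorem then produces a density $R_T$ so that under $\mathbb Q = R_T \mathbb P$ the process $(X_t, V_t)$ solves \eqref{eq:mf-sde} with initial law $m_0$, and therefore
\begin{equation*}
H(m_T | m_\infty) \leq \Expect[R_T \log R_T]
= \tfrac12 \Expect^{\mathbb Q}\!\biggl[\int_0^T |\xi_s|^2\, ds \biggr].
\end{equation*}
The tilting $|\xi_s|^2$ now splits into a geometric part bounded by $C T^{-3}(|X_0 - \tilde X_0|^2 + T^2|V_0 - \tilde V_0|^2)$ and a drift-difference part bounded by $C\bigl(W_2^2(m_s^x, m_\infty^x) + |X_s - \tilde X_s|^2\bigr)$ via \eqref{eq:lip}. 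Contractivity from \cref{lem:synchronous-coupling} gives $W_2(m_s^x, m_\infty^x) \leq e^{Cs}W_2(m_0, m_\infty)$, while the explicit bridge formulas bound $|X_s - \tilde X_s|^2$ by $C|Z_0 - \tilde Z_0|^2$ uniformly in $s \in [0, T]$. For $T \in (0, 1]$ the geometric term dominates, producing $H(m_T | m_\infty) \leq CT^{-3}W_2^2(m_0, m_\infty)$ as claimed.

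The main technical obstacle will be showing that $R_t$ is a uniformly integrable $\mathbb P$-martingale rather than merely a local one, since with $m_0$, $m_\infty \in \mathcal P_2$ only, the tilting $\xi$ is only square integrable and Novikov's condition is not automatic. I would sidestep this exactly as in the proof of \cref{lem:log-harnack-poc}: first restrict to compactly supported initial measures, where \cref{lem:girsanov-ui} ensures $R$ is a true martingale and the argument above goes through verbatim, and then pass to the general case via $W_2$-approximation together with the $W_2$-continuity of $P^*_t$ and the weak lower semicontinuity of the relative entropy.
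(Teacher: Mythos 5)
Your proposal is correct and follows essentially the same path as the paper. For the particle system the paper simply invokes \cref{lem:log-harnack-poc} in the degenerate case $M^b_m = M^b_{mm} = 0$ (treating the $N$-particle system as a single Markov diffusion on $\mathbb R^{2dN}$ whose drift is $(M^F_{mx}+M^F_{mm})$-Lipschitz by \cref{eq:D^2-U^N-bound}), and for the mean field flow it cites the log-Harnack inequality for McKean--Vlasov diffusions of Ren and Wang \cite[Proposition 5.1]{ren2021exponential} rather than re-deriving the bridge-and-Girsanov argument; your sketch of the latter (bridge correction to hit $\tilde Z_T$ at time $T$, Girsanov tilting, entropy bounded by $\Expect[R_T\log R_T]$, splitting of $|\xi_s|^2$ into a geometric $T^{-3}$ part and a drift-difference part controlled by synchronous coupling, and the cut-off/approximation to dispense with Novikov) is precisely the content of that reference and mirrors the computations already carried out in \cref{lem:log-harnack-poc}.
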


\begin{proof}
Note that \(m_t = P_t^* m_0\) and \(m^N_t = \bigl(P^N_t\bigr)^{\!*} m^N_0\)
where \(P_t\) and \(P^N_t\) are
the McKean--Vlasov and the linear semigroup
corresponding to the SDEs \cref{eq:mf-sde,eq:ps-sde}, respectively.
We then apply the log-Harnack inequality for McKean--Vlasov diffusions \cite[Proposition 5.1]{ren2021exponential}
and obtain
\[
H(m_t | m_\infty) \leq \frac{C}{t^3} W_2^2(m_t, m_\infty)
\]
for \(t \in (0,1]\).
For the particle system we apply the classical log-Harnack inequality
(which corresponds to the case where \(M^b_m\) and \(M^b_{mm}\) are both equal
to \(0\) in our \cref{lem:log-harnack-poc}, i.e.\ no mean field dependence)
and obtain
\[
H\bigl(m^N_t \big| m^N_\infty\bigr) \leq \frac{C}{t^3}
W_2^2\bigl(m^N_t, m^N_\infty\bigr)
\]
for \(t \in (0,1]\)
and it is clear from the computations in \cref{lem:log-harnack-poc} that
the constant \(C\) can be chosen to depend only on
\(M^F_{mx}\) and \(M^F_{mm}\).
\end{proof}

\subsection{From entropy to Fisher information}
\label{sec:herau}

We then adapt Hérau's functional to our setting to obtain
the regularization from entropy to Fisher information.

\begin{prop}
\label{prop:h-to-i}
Assume that \(F\) satisfies \cref{eq:lip,eq:convex},
and that there exist probabilities \(m_\infty\), \(m^N_\infty\) satisfying \cref{eq:mf-invariant-measure,eq:ps-invariant-measure} respectively
and having finite exponential moments.
Let \(m_0\) (resp.\ \(m^N_0\)) be the initial value of
the mean field dynamics \cref{eq:mf-fp}
(resp.\ the particle system dynamics \cref{eq:ps-fp})
of finite second moment and finite entropy.
Then there exist a positive constant \(C\) depending
on \(M^F_{mm}\) and \(M^F_{mx}\) such that
for every \(t \in (0,1]\),
\begin{equation}
\label{eq:h-to-i}
I(m_t | \hat m_t) \leq \frac{C}{t^3} \bigl(\mathcal F(m_0) - \mathcal F(m_\infty) \bigr)
\quad\text{(resp.\ $I\bigl(m^N_t \big| m^N_\infty\bigr)
\leq \frac{C}{t^3} H\bigl(m^N_0 \big| m^N_\infty\bigr)$).}
\end{equation}
\end{prop}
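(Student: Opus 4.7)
The plan is to adapt Hérau's time-weighted functional to our nonlinear setting by considering, for positive constants \(a\), \(b\), \(c\) to be chosen later and for \(t \in [0,1]\),
\[
\Psi(t) \coloneqq \mathcal F(m_t) - \mathcal F(m_\infty)
+ I_{at, bt^2, ct^3}(m_t | \hat m_t),
\]
where \(I_{\cdot,\cdot,\cdot}\) is the anisotropic Fisher information introduced in \cref{eq:mf-hypocoer-functional}. Since all three time weights vanish at \(t = 0\) we have the key identity \(\Psi(0) = \mathcal F(m_0) - \mathcal F(m_\infty)\). Correspondingly, for the particle system we set
\[
\Psi^N(t) \coloneqq H\bigl(m^N_t \big| m^N_\infty\bigr)
+ I_{at, bt^2, ct^3}\bigl(m^N_t \big| m^N_\infty\bigr),
\]
with \(\Psi^N(0) = H\bigl(m^N_0 \big| m^N_\infty\bigr)\); the simpler ``relative to the true invariant measure'' form is available here because \(m^N_\infty\) is genuinely stationary for the linear Fokker--Planck equation \cref{eq:ps-fp}.

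Next I would show that \(t \mapsto \Psi(t)\) is absolutely continuous on \([0,1]\) and compute its derivative by essentially reusing the hypocoercive calculus underlying the proofs of \cref{thm:mf-entropy-convergence,lem:ps-unif-hypocoer}. Writing \(Y_t\) as in \cref{eq:Y}, the entropy dissipation contributes \(-\Vert A_t u_t \Vert^2\), while the contributions from the \(I_{at,bt^2,ct^3}\) piece split into two groups: first, the hypocoercive dissipation terms already controlled by the matrix \(K_2 - K_3\) of the proof of \cref{thm:mf-entropy-convergence}, now rescaled by the respective weights \((at, 2bt^2, ct^3)\); second, the new contributions \(a \Vert A_t u_t \Vert^2 + 4bt \langle A_t u_t, C_t u_t \rangle + 3ct^2 \Vert C_t u_t \Vert^2\) coming from differentiating the time weights themselves. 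Collecting everything gives
\[
\frac{d\Psi(t)}{dt} \leq - Y_t^\mathsf{T} K(t) Y_t,
\]
where \(K(t)\) is an explicit \(4 \times 4\) matrix whose entries are polynomials in \(t\) of low degree.

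The main technical step is then to choose \(a\), \(b\), \(c\) with \(ac > b^2\) and \(a < 1\) so that \(K(t)\) is positive semi-definite for every \(t \in [0,1]\). The constant coefficient in the upper-left entry of \(K(t)\) equals \(1 - a\), which is positive provided \(a < 1\); all other diagonal entries already come from the previously established positive-definite matrix \(K\) of \cref{thm:mf-entropy-convergence}, now multiplied by positive weights; and the off-diagonal perturbations introduced by the new terms \(4bt \langle A_t u_t, C_t u_t \rangle\) are of order \(t\) and can be absorbed through a Young inequality into the diagonal \(at \Vert A_t u_t \Vert^2\) and \(ct^3 \Vert C_t u_t \Vert^2\), provided \(b\) is chosen small enough compared to \(\sqrt{ac}\). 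An approximation argument identical to the one developed in the proof of \cref{thm:mf-entropy-convergence} (mollifying \(F\) and \(m_0\) so that the computation above is classical, then taking lower limits using \cref{lem:lower-semi-cont}) makes the differential inequality rigorous for arbitrary \(m_0 \in \mathcal P_2(\mathbb R^d)\) with finite entropy. I expect this matching of the Hérau weights against the coefficients already established in \cref{thm:mf-entropy-convergence,lem:ps-unif-hypocoer} to be the most delicate part, as one must simultaneously preserve positive-definiteness uniformly for \(t \in [0,1]\) and the property \(ac > b^2\) needed in the last step.

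Integrating \(\Psi'(t) \leq 0\) gives \(\Psi(t) \leq \Psi(0) = \mathcal F(m_0) - \mathcal F(m_\infty)\). To conclude it suffices to observe that, for \(t \in (0,1]\), the symmetric matrix \(\left(\begin{smallmatrix} at & bt^2 \\ bt^2 & ct^3 \end{smallmatrix}\right)\) has determinant \((ac - b^2) t^4\) and trace \(at + ct^3 \leq (a + c) t\), so its smallest eigenvalue is at least \((ac - b^2) t^3 / (a + c)\). Consequently
\[
I_{at, bt^2, ct^3}(m_t | \hat m_t)
\geq \frac{(ac - b^2) t^3}{a + c} I(m_t | \hat m_t),
\]
and since \(\mathcal F(m_t) - \mathcal F(m_\infty) \geq 0\) by the convexity of \(F\) and the fact that \(m_\infty\) minimizes \(\mathcal F\), we obtain
\[
I(m_t | \hat m_t)
\leq \frac{a+c}{(ac - b^2) t^3} \Psi(t)
\leq \frac{C}{t^3} \bigl(\mathcal F(m_0) - \mathcal F(m_\infty)\bigr).
\]
The particle system estimate is established in exactly the same way, with \(\Psi^N\) in place of \(\Psi\), using \(H\bigl(m^N_t \big| m^N_\infty\bigr) \geq 0\) instead of \(\mathcal F(m_t) - \mathcal F(m_\infty) \geq 0\), and invoking \cref{lem:ps-unif-hypocoer} for the underlying hypocoercive dissipation.
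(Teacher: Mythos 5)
Your proposal is correct and follows essentially the same route as the paper: both introduce the time-weighted Hérau functional (your \(\Psi(t)\) equals the paper's \(\mathcal E(t,m_t) - \mathcal F(m_\infty)\)), establish monotonicity via the hypocoercive matrix inequality \(\frac{d}{dt}\Psi(t) \le -Y_t^{\mathsf T} K(t) Y_t\) with suitably chosen \(a,b,c\), use the positivity of \(\mathcal F(m_t) - \mathcal F(m_\infty)\) (resp.\ \(H(m^N_t|m^N_\infty)\)) to drop the free-energy term, and invoke the eigenvalue lower bound \(\bigl(\begin{smallmatrix} at & bt^2 \\ bt^2 & ct^3 \end{smallmatrix}\bigr) \succeq \frac{(ac-b^2)t^3}{a+c}\Id\) to recover the standard Fisher information, with the same approximation argument to handle low regularity.
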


\begin{proof}
First derive the bound for the mean field system.
We suppose additionally \(F\) satisfies \cref{eq:space-higher}
and \(m_0 / m_\infty \in \mathcal A_+\) without loss of generality,
as they can be removed by the approximation argument
in the end of the proof of \cref{thm:mf-entropy-convergence}.
Let \(a\), \(b\), \(c\) be positive constants to be determined.
Motivated by \cite[Theorem A.18]{hypocoer},
we define Hérau's Lyapunov functional for mean field measures:
\begin{multline*}
\mathcal E (t,m) = F(m) + \frac 12\int |v^2| m + H(m)
+ a t \int |\nabla_v \log \eta|^2 m \\
+ 2b t^2 \int \nabla_x \log \eta\cdot \nabla_v \log \eta m
+ c t^3 \int |\nabla_x \log \eta|^2 m
\end{multline*}
where \(\eta \coloneqq m / \hat m\).
From the argument of \cref{thm:mf-entropy-convergence},
we know that \(\mathcal E(t,m_t)\) is well defined
and \(t \mapsto \mathcal E(t,m_t)\) admits derivative satisfying
\(\frac{d}{dt} \mathcal E(t,m_t) \leq - Y_t^{\mathsf T} K'_t Y_t\),
where \(K'_t\) is equal to
\[
\begin{pmatrix}
1 - a + 2at - 2\bigl(M^F_{mx} + M^F_{mm} \bigr) b t^2 & - 2bt^2
& - 2at - 4bt - 2M^F_{mm} ct^3 & 0 \\
0 & 2at & - 2M^F_{mx}ct^3 & -4bt^2 \\
0 & 0 & 2bt^2 - 3ct^2 & 0 \\
0 & 0 & 0 & 2ct^3
\end{pmatrix}
\]
and \(Y_t\) is defined by \cref{eq:Y}.
We then choose the constants \(a\), \(b\), \(c\)
depending only on \(M^F_{mx}\) and \(M^F_{mm}\)
such that \(ac > b^2\) and
\(K'_t \succeq 0\) for \(t \in [0,1]\).
Hence \(t \mapsto \mathcal E(t,m_t)\) is non-increasing on \([0,1]\)
and the Fisher bound follows: for every \(t \in (0,1]\),
\begin{align*}
I(m_t | \hat m_t)
&\leq \frac{C}{t^3} \bigl( \mathcal E(t,m_t) - \mathcal F(m_t) \bigr) \\
&\leq \frac{C}{t^3} \bigl( \mathcal E(t,m_t) - \mathcal F(m_\infty) \bigr) \\
&\leq \frac{C}{t^3} \bigl( \mathcal E(0,m_0) - \mathcal F(m_\infty) \bigr) \\
&= \frac{C}{t^3} \bigl( \mathcal F(m_0) - \mathcal F(m_\infty) \bigr).
\end{align*}
Here, in the second inequality,
we use $\mathcal F(m_t) \geq \mathcal F(m_\infty)$
which is a consequence of \cref{lem:mf-entropy-sandwich}.
Note that this inequality relies on the convexity of $F$.

For the particle system
we suppose additionally \(U^N\) satisfies \cref{eq:U^N-bound}
and \(m^N_0\!\big/ m^N_\infty\) satisfies \cref{eq:h^N_0-bound} without loss of generality,
as they can be removed by the argument in the end of the proof of \cref{lem:ps-unif-hypocoer}.
We define
\begin{multline*}
\mathcal E^N (t,m^N) = F^N(m^N) + \frac 12\int |\mathbf v^2| m^N + H(m^N)
+ a t \int\bigl|\nabla_{\mathbf v} \log h^N\bigr|^2 m^N \\
+ 2b t^2 \int \nabla_{\mathbf x} \log h^N\cdot \nabla_{\mathbf v} \log h^N m^N
+ c t^3 \int\bigl|\nabla_{\mathbf x} \log h^N\bigr|^2 m^N
\end{multline*}
where \(h^N \coloneqq m^N\!\big/ m^N_\infty\).
By the computations in \cref{lem:ps-unif-hypocoer}, we have
\(\frac{d}{dt} \mathcal E^N\bigl(t,m^N_t\bigr)
\leq - (Y^N_t)^{\mathsf T} K''_t Y^N_t\),
where \(K''_t\) is equal to
\[
\begin{pmatrix}
1 - a +2at - 2\bigl(M^F_{mx}+M^F_{mm}\bigr) bt^2 & -2bt^2 & -2at -4bt & 0\\
  & 2at & -2 \bigl(M^F_{mx}+M^F_{mm}\bigr) ct^3 & -4bt^2 \\
  & & 2bt^2 - 3ct^2 & 0\\
  & & & 2ct^3
\end{pmatrix}
\]
and \(Y^N_t\) is defined by \cref{eq:Y^N}.
We choose again the constants \(a\), \(b\), \(c\)
depending only on \(M^F_{mx}\) and \(M^F_{mm}\)
such that \(ac > b^2\) and \(t \mapsto \mathcal H^N(t, m^N_t)\) is non-increasing on \([0,1]\).
Hence we have for every \(t \in (0,1]\),
\begin{align*}
I\bigl(m^N_t \big| \hat m^N_\infty\bigr)
&\leq \frac{C}{t^3} \Bigl( \mathcal E^N\bigl(t,m^N_t\bigr)
- \mathcal F^N\bigl(m^N_t\bigr) \Bigr) \\
&\leq \frac{C}{t^3} \Bigl( \mathcal E^N\bigl(t,m^N_t\bigr)
- \mathcal F^N\bigl(m^N_\infty\bigr) \Bigr) \\
&\leq \frac{C}{t^3} \Bigl( \mathcal E^N\bigl(0,m^N_0\bigr)
- \mathcal F^N\bigl(m^N_\infty\bigr) \Bigr) \\
&= \frac{C}{t^3} \Bigl( \mathcal F^N\bigl(m^N_0\bigr)
- \mathcal F^N\bigl(m^N_\infty\bigr) \Bigr) \\
&= \frac{C}{t^3} H\bigl(m^N_0 \big| m^N_\infty\bigr).
\end{align*}
Similarly, we use the fact that
$\mathcal F^N\bigl(m^N_t\bigr) - \mathcal F^N\bigl(m^N_\infty\bigr)
= H \bigl( m^N_t \big| m^N_\infty \bigr) \geq 0$
to get the second inequality.
Here the difference is that the $N$-particle system is linear
and this fact does not rely on the convexity of $F$.
\end{proof}

\subsection{Propagation of chaos}
\label{sec:poc}

Using all the regularization results proved in \cref{sec:log-harnack,sec:herau}
we can finally give the proof of the main theorem.

\begin{proof}[Proof of \cref{thm:poc}]
Let \(m_0\) and \(m^N_0\) be the respective initial values for the dynamics
\cref{eq:mf-fp,eq:ps-fp}
and suppose they have finite second moment.
The first claim of the theorem \cref{eq:w-poc}
can be written as two bounds on \(W_2^2\bigl(m^N_t, m_t^{\otimes N}\bigr)\),
the first of which follows directly from the finite-time bound
in \cref{prop:finite-w-poc}.
The second claim \cref{eq:h-poc} is nothing but \cref{lem:log-harnack-poc}.
It remains to
find some \(C_2\), \(\kappa\) depending only on \(\rho^x\), \(M^F_{mx}\),
\(M^F_{mm}\) and prove
\begin{multline}
\label{eq:large-w-poc}
W_2^2\bigl(m^N_t, m^{\otimes N}_t\bigr)
\leq
\frac{C_2N}{(t \wedge 1)^6} W_2^2(m_0, m_\infty) e^{-\kappa t} \\
+ \frac{C_2}{(t \wedge 1)^6} W_2^2\bigl(m^N_0, m^{\otimes N}_\infty\bigr)
e^{ - ( \kappa - C_2/N ) t }
+ \frac{C_2d}{\kappa - C_2/N}
\end{multline}
for \(t > 0\).
Set \(t_1 = \frac{t \wedge 1}{2}\) and \(t_2 = t \wedge 1\).
By the Wasserstein to entropy regularization result in \cref{prop:w-to-h},
we can find a constant \(C\)
depending on \(M^F_{mx}\) and \(M^F_{mm}\) such that
\[
H(m_{t_1} | m_\infty) \leq \frac{C}{t_1^3} W_2^2(m_0, m_\infty)
\quad\text{and}\quad
H\bigl(m^N_{t_1} \big| m^N_\infty\bigr)
\leq \frac{C}{t_1^3} W_2^2\bigl(m^N_0, m^N_\infty\bigr).
\]
In the following \(C\) may change from line to line
and may depend additionally on the LSI constant \(\rho\).
Applying the regularization in \cref{prop:h-to-i} to the dynamics
with \(m_{t_1}\) and \(m^N_{t_1}\) as respective initial values
and noting that \(t_2 - t_1 \leq 1\) by definition,
we obtain
\begin{align*}
I(m_{t_2} | \hat m_{t_2}) &\leq \frac{C}{(t_2 - t_1)^3} \bigl(\mathcal F(m_{t_1}) - \mathcal F(m_\infty) \bigr), \\
I\bigl(m^N_{t_2}\big|\hat m^N_{t_2}\bigr)
&\leq \frac{C}{(t_2 - t_1)^3} H\bigl(m^N_{t_1}\big| m^N_\infty\bigr),
\end{align*}
whereas \( \mathcal F(m_{t_1}) - \mathcal F(m_\infty) \)
is bounded by the entropy sandwich in \cref{lem:mf-entropy-sandwich}:
\[
\mathcal F(m_{t_1}) - \mathcal F(m_\infty) \leq C H(m_{t_1} | m_\infty).
\]
Consequently, both the measures \(m_{t_2}\) and \(m^N_{t_2}\) have finite entropy and finite Fisher information,
and we can apply respectively \cref{thm:mf-entropy-convergence,thm:ps-entropy-convergence}
to the dynamics with initial values \(m_{t_2}\) and \(m^N_{t_2}\).
We then obtain
\begin{align*}
\mathcal F(m_t) - \mathcal F(m_\infty)
&\leq \frac{C}{t_1^6} W_2^2(m_0, m_\infty) e^{-\kappa (t - t_2)} \\
\intertext{and}
\mathcal F^N\bigl(m^N_t\bigr) - N\mathcal F(m_\infty)
&\leq \frac{C}{t_1^6} W_2^2\bigl(m^N_0, m_\infty^{\otimes N}\bigr)
e^{-(\kappa - C/N) (t - t_2)}
+ \frac{Cd}{\kappa - C/N}.
\end{align*}
Using consecutively the triangle inequality,
Talagrand's inequality \cref{eq:t2} for \(m_\infty^{\otimes N}\)
and the entropy inequalities in \cref{lem:mf-entropy-sandwich,lem:ps-entropy-sandwich},
we have
\begin{align*}
W_2^2\bigl(m^N_t, m^{\otimes N}_t\bigr)
&\leq 2 W_2^2\bigl(m^N_t, m^{\otimes N}_\infty\bigr) + 2 N W_2^2 (m_t, m_\infty) \\
&\leq \frac{2}{\rho}
\Bigl(H\bigl(m^N_t\big| m^{\otimes N}_\infty\bigr)
+ NH(m_t | m_\infty)\Bigr) \\
&\leq \frac{2}{\rho}
\Bigl(\mathcal F^N\bigl(m^N_t\bigr) - N\mathcal F(m_\infty)
+ N\bigl(\mathcal F(m_t) - \mathcal F(m_\infty)\bigr)\Bigr)
\end{align*}
So the inequality \cref{eq:large-w-poc} is proved by combining the above
three inequalities.
\end{proof}

\appendix

\section{Lower-semicontinuities}
\label{sec:lower-semi-cont}

\begin{lem}
\label{lem:lower-semi-cont}
The entropy \(H(\cdot)\) and the Fisher information \(I(\cdot)\)
are lower-semicontinuous with respect to the weak topology of \(\mathcal P_2\).
Consequently,
under the assumption \cref{eq:lip},
if \((m_n)_{n \in \mathbb N}\) is a sequence converging to \(m_*\) in \(\mathcal P_2(\mathbb R^{2d})\),
then
\[
\liminf_{n \to +\infty} H(m_n | \hat m_n) \ge H(m_* | \hat m_*)
\quad\text{and}\quad
\liminf_{n \to +\infty} I(m_n | \hat m_n) \ge  I(m_* | \hat m_*).
\]
\end{lem}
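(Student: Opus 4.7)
The plan is to reduce the relative assertions to the absolute ones by freezing $\hat m_*$ as a reference. First I would establish the lower-semicontinuity of $H(\cdot)$ and $I(\cdot)$ on $\mathcal P_2$. For the entropy, write $H(m) = H(m \mid \gamma) - \tfrac d2 \log(2\pi) - \tfrac12 \int |z|^2\, dm$ against a standard Gaussian $\gamma$ on $\mathbb R^{2d}$: the Donsker--Varadhan variational formula shows $H(\cdot \mid \gamma)$ is narrowly LSC, and the quadratic term is continuous on $\mathcal P_2$ by definition of $W_2$. For the Fisher information one may invoke \cite{gf}, or equivalently argue from the dual formula
\[
I(m) = \sup_{\phi \in C^\infty_c(\mathbb R^{2d};\mathbb R^{2d})} \biggl\{ -2\int m\,\operatorname{div}\phi - \int |\phi|^2 m \biggr\},
\]
which writes $I$ as a supremum of $\mathcal P_2$-continuous functionals.

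The key comparison step is to control $\hat m_n$ against $\hat m_*$ under \cref{eq:lip}. Since $m_n \to m_*$ in $\mathcal P_2(\mathbb R^{2d})$ implies $W_1(m_n^x, m_*^x) \to 0$, the Lipschitz bound gives
\[
\sup_{x \in \mathbb R^d} |D_m F(m_n^x, x) - D_m F(m_*^x, x)| \leq M^F_{mm} W_1(m_n^x, m_*^x) \to 0.
\]
Setting $V_n(x) := \frac{\delta F}{\delta m}(m_n^x,x)$, normalised (without loss of generality) to vanish at $0$, integration along the segment from $0$ to $x$ yields $|V_n(x) - V_*(x)| \leq \varepsilon_n(1+|x|)$ with $\varepsilon_n \to 0$. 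This in turn gives convergence of the partition functions $Z_n \to Z_*$ and a strong convergence $\hat m_n \to \hat m_*$ in the form $|\log(\hat m_n/\hat m_*)(z)| \leq \varepsilon'_n(1+|x|)$ for some $\varepsilon'_n \to 0$, together with uniform convergence $\nabla\log\hat m_n \to \nabla\log\hat m_*$ on $\mathbb R^{2d}$.

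Given this, the relative statements follow. For the entropy, decompose
\[
H(m_n \mid \hat m_n) = H(m_n \mid \hat m_*) + \int \log\frac{\hat m_*}{\hat m_n}\, dm_n;
\]
the integral tends to zero thanks to the uniform at-most-linear bound from the previous step and the $\mathcal P_2$-uniform bound on $\int(1+|x|)\, dm_n$, whereas $H(\cdot \mid \hat m_*) = H(\cdot) + \int (V_* + \tfrac12 |v|^2)\, d(\cdot) + \log Z_*$ is LSC on $\mathcal P_2$ by the first paragraph together with $\mathcal P_2$-continuity of the quadratic-growth correction (noting that $V_*$ has at most quadratic growth since $\nabla V_* = D_m F(m_*^x,\cdot)$ is Lipschitz). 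For the Fisher information, I would apply pointwise $|a|^2 \geq (1-\varepsilon)|b|^2 - (\varepsilon^{-1}-1)|a-b|^2$ with $a = \nabla\log m_n - \nabla\log\hat m_n$ and $b = \nabla\log m_n - \nabla\log\hat m_*$, integrate against $m_n$, and use the uniform convergence of the gradients to make the error term $\int|\nabla\log\hat m_n - \nabla\log\hat m_*|^2 dm_n$ vanish. The remaining term $I(m_n \mid \hat m_*)$ is LSC at $m_*$ by a dual representation analogous to the first paragraph, and letting $\varepsilon \to 0$ gives the conclusion.

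The main obstacle lies in the second step: \cref{eq:lip} provides direct control only on the gradient $D_m F = \nabla_x \frac{\delta F}{\delta m}$, not on $\frac{\delta F}{\delta m}$ itself, so reconstructing the potential $V_n - V_*$ from its gradient requires integration and careful bookkeeping of additive $x$-independent constants. This ambiguity is absorbed precisely because $\hat m_n$ and $\hat m_*$ are normalised to be probability densities, while the at-most-quadratic growth of $V_*$ is exactly what allows passage to the limit against $\mathcal P_2$-convergent measures.
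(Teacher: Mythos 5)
Your proposal is correct and follows the same overall strategy as the paper---establish lower semicontinuity of the absolute entropy and Fisher information, then control the change of reference measure $\hat m_n \to \hat m_*$ via the Lipschitz bound \cref{eq:lip} and the growth of $\frac{\delta F}{\delta m}$ and $D_m F$---but the technical instruments you use differ in each half. For the absolute Fisher information, the paper does not use a dual variational formula; instead it shows that a uniform bound $I(m_n)\le M^2$ forces $\Vert\nabla m_n\Vert_{L^1}\le M$ by Cauchy--Schwarz, deduces from $m_n\to m_*$ that the $\mathbb R^d$-valued Radon measures $\nabla m_n$ converge locally weakly to $\nabla m_*$, and then invokes the lower-semicontinuity theorem of Ambrosio--Fusco--Pallara (their Theorem 2.34). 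Your supremum-of-narrowly-continuous-functionals argument is a clean and well-known alternative, arguably more self-contained; the paper's route additionally yields the intermediate observation that $\nabla m_*$ is a finite measure of total variation $\le M$. For the relative statements, the paper's proof is a one-sentence remark that the result follows from the absolute LSC together with the locally uniform quadratic growth of $\frac{\delta F}{\delta m}(m,\cdot)$ (resp.\ linear growth of $D_m F(m,\cdot)$); implicitly it separates $H(m\mid\hat m)=H(m)+\int(\frac{\delta F}{\delta m}(m^x,x)+\tfrac12|v|^2)\,dm+\log Z(m)$ and argues that the non-entropy pieces are jointly continuous along the sequence. Your decomposition---freezing $\hat m_*$ as the reference and showing that $\int\log(\hat m_*/\hat m_n)\,dm_n\to 0$ (resp.\ the $\varepsilon$-weighted Cauchy--Schwarz for the Fisher term)---is functionally equivalent and, if anything, spells out more carefully the bookkeeping on $V_n-V_*$ and the partition functions that the paper elides. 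Your closing caveat about recovering $V_n-V_*$ from its gradient and the role of normalisation is a correct observation of a genuine subtlety in the terse published argument.
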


\begin{proof}
The lower semicontinuity of \(m \mapsto H(m)\) is classical.
We show the lower semicontinuity of the Fisher information.
Let \((m_n)_{n \in \mathbb N}\) be a sequence
converging to \(m_*\) in \(\mathcal P_2(\mathbb R^d)\).
Without loss of generality, we suppose \(I(m_n) \leq M^2\) for every \(n \in \mathbb N\).
This implies in particular \(\Vert \nabla m_n \Vert_{L^1} \leq M\) by Cauchy--Schwarz.
For every function \(\varphi\) belonging to \(C^\infty_c(\mathbb R^d)\),
we have
\[
\lvert\langle \nabla \varphi, m_* \rangle\rvert
= \lim_{n \to +\infty} \lvert\langle \nabla \varphi, m_n \rangle\rvert
\leq M \Vert \varphi \Vert_{\infty}.
\]
Hence \(\Vert \nabla m_* \Vert_\textnormal{TV} \leq M\) as well.
Moreover, for every \(f \in C_c(\mathbb R^d)\) and every \(\varepsilon > 0\),
we can find \(\varphi \in C^{\infty}_c(\mathbb R^d)\) such that
\(\Vert f - \varphi \Vert_\infty < \frac{\varepsilon}{4M}\)
and \(n \in \mathbb N\) such that
\(| \langle \nabla \varphi, m_n - m_* \rangle | < \frac\varepsilon 2\).
Then,
\begin{multline*}
\lvert\langle f, \nabla (m_n - m_*)\rangle\rvert
\leq \lvert\langle f - \varphi, \nabla m_n \rangle\rvert
+ \lvert\langle f - \varphi, \nabla m_* \rangle\rvert
+ \lvert\langle\nabla\varphi, m_n - m_* \rangle\rvert \\
< 2 \cdot \frac{\varepsilon}{4M} \cdot M + \frac{\varepsilon}{2} = \varepsilon.
\end{multline*}
Equivalently, the sequence of \(\mathbb R^d\)-valued Radon measures \((\nabla m_n)_{n \in \mathbb N}\) converges to \(\nabla m\) \emph{locally} weakly.
We then apply \cite[Theorem 2.34]{ambrosio2000functions} to obtain
\[
\liminf_{n \to +\infty} I(m_n) \geq I(m_*).
\]

Finally, the lower semicontinuity of \(m \mapsto H(m | \hat m)\) (resp.\ \(m \mapsto I(m | \hat m)\))
follows from the lower semicontinuity of \(m \mapsto H(m)\) (resp.\ \(m \mapsto I(m)\))
and the locally uniform quadratic growth of \(x \mapsto \frac{\delta F}{\delta m}(m,x)\)
(resp.\ the locally uniform linear growth of \(x \mapsto D_m F(m,x)\)).
\end{proof}

\section{Convergence of non-linear functional of empirical measures}
\label{sec:convergence-empirical}

Let \(\phi : \mathcal P_2(\mathbb R^d) \to \mathbb R\) be a (non-linear) mean field functional
and \(m\) be a probability measure with finite second moment.
We suppose the first and second-order functional derivatives
\(\frac{\delta \phi}{\delta m}, \frac{\delta^2 \phi}{\delta m^2}\) exist
and that \((\phi, m)\) satisfies
\begin{gather}
\forall m' \in \mathcal P_2(\mathbb R^d),~\forall x \in \mathbb R^d,\quad
\bigl|D_m \phi(m',x)\bigr| \leq M_1,
\label{eq:phi-first} \\
\forall m' \in \mathcal P_2(\mathbb R^d),\quad
\biggl| \iint
\biggl[ \frac{\delta^2 \phi}{\delta m^2} (m', x, x) -
\frac{\delta^2 \phi}{\delta m^2} (m', x, y) \biggr] m(dx) m(dy)
\biggr| \leq M_2
\label{eq:phi-second}
\end{gather}
for some constants \(M_1\) and \(M_2\).

\begin{rem}
The condition \cref{eq:phi-second} is a modified version of
the condition \cite[($p$-LFD)]{tse2019quantitative}.
Our version has the advantage of being \emph{intrinsic}:
the left hand side of \cref{eq:phi-second} stays invariant under the change
\(\frac{\delta^2 \phi}{\delta m^2} (m,x,y)
\to \frac{\delta^2\phi}{\delta m^2} (m,x,y)
+ \frac{\delta \phi_1}{\delta m} (m,y)
+ \phi_2(m)\)
for regular enough \(\phi_1\) and \(\phi_2\).
\end{rem}

\begin{lem}
\label{lem:convergence-empirical}
If the mean field functional \(\phi\) and the measure \(m\) satisfy \cref{eq:phi-first,eq:phi-second},
then for \(N\) i.i.d.\ random variables \(\xi_1\), \(\ldots\,\), \(\xi_N \sim \mu\), we have
\begin{equation}
\label{eq:convergence-empirical}
\Expect \bigl[ \lvert\phi (\mu_{\boldsymbol \xi}) - \phi(m) \rvert^2 \bigr] \leq \frac{M_1^2\Var m}{N} + \frac{M_2^2}{4N^2}.
\end{equation}
\end{lem}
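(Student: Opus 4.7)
The plan is to split the error by the bias–variance decomposition
\[
\Expect\bigl[|\phi(\mu_{\boldsymbol\xi}) - \phi(m)|^2\bigr]
= \Var\phi(\mu_{\boldsymbol\xi})
+ \bigl(\Expect\phi(\mu_{\boldsymbol\xi}) - \phi(m)\bigr)^2,
\]
and to match the two resulting terms to the two summands of \cref{eq:convergence-empirical}. The variance will give the \(M_1^2\Var m/N\) term and the squared bias will give the \(M_2^2/(4N^2)\) term.

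For the variance, I would invoke the Efron--Stein inequality. Let \(\xi'_i\) be an independent copy of \(\xi_i\) and let \(\mu^{(i)}\) denote \(\mu_{\boldsymbol\xi}\) with \(\xi_i\) swapped for \(\xi'_i\). A first-order Taylor expansion in measure along the segment from \(\mu\) to \(\mu^{(i)}\), combined with the spatial Lipschitz bound \(|D_m\phi(m',\cdot)| \leq M_1\) from \cref{eq:phi-first}, yields the bounded-difference estimate \(|\phi(\mu) - \phi(\mu^{(i)})| \leq (M_1/N)|\xi_i - \xi'_i|\). Efron--Stein together with \(\Expect|\xi_i - \xi'_i|^2 = 2\Var m\) then delivers \(\Var\phi(\mu_{\boldsymbol\xi}) \leq M_1^2\Var m/N\).

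For the bias, I would Taylor-expand to second order along \(m_t = (1-t)m + t\mu_{\boldsymbol\xi}\):
\[
\phi(\mu) - \phi(m) = L + \int_0^1(1-t)Q_t\,dt,\qquad
L \coloneqq \int \tfrac{\delta\phi}{\delta m}(m,x)(\mu - m)(dx),
\]
with \(Q_t \coloneqq \iint K(m_t,x,y)(\mu-m)(dx)(\mu-m)(dy)\) and \(K \coloneqq \frac{\delta^2\phi}{\delta m^2}\). Since \(\Expect L = 0\), the bias equals \(\int_0^1(1-t)\Expect Q_t\,dt\), so it would suffice to show \(|\Expect Q_t| \leq M_2/N\) uniformly in \(t \in [0,1]\), which would give \(|\text{bias}| \leq M_2/(2N)\) after integrating against \((1-t)\).

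The hard part is precisely this pointwise bound on \(\Expect Q_t\). Expanding \((\mu-m)^{\otimes 2} = N^{-2}\sum_{i,j}(\delta_{\xi_i}-m)\otimes(\delta_{\xi_j}-m)\) and using exchangeability splits \(\Expect Q_t\) into a diagonal contribution weighted by \(1/N\) and an off-diagonal contribution weighted by \((N-1)/N\). If \(K(m_t,\cdot,\cdot)\) were independent of \(\boldsymbol\xi\), the off-diagonal pairs would cancel by the independence of the samples and the diagonal would reduce to \(\frac{1}{N}\Expect\Delta(m_t)\), where \(\Delta(m') = \int K(m',x,x)m(dx) - \iint K(m',x,y)m(dx)m(dy)\); the intrinsic condition \cref{eq:phi-second}---which crucially is uniform in \(m'\) and therefore applies pointwise to the random \(m_t\)---would then immediately give \(|\Expect Q_t| \leq M_2/N\). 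The genuine obstacle is that \(K(m_t,\cdot,\cdot)\) is coupled to \(\boldsymbol\xi\) through \(m_t\), so the off-diagonal pieces no longer cancel by independence, and I have no Lipschitz or boundedness control on \(K\) as a functional of \(m\) to absorb the coupling directly. I expect the delicate step to be an exchangeability-based rearrangement of the diagonal and off-diagonal expectations that repackages them into the single intrinsic quantity \(\Delta(m_t)\), so that only \cref{eq:phi-second} is needed and no further derivative of \(K\) in measure is invoked. Combining this bias bound with the variance bound then gives the claimed inequality.
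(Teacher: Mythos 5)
Your bias--variance decomposition and your Efron--Stein argument for the variance term coincide exactly with the paper's proof, and that half of the lemma is correct: the leave-one-out Lipschitz estimate $|\phi(\mu) - \phi(\mu^{(i)})| \leq (M_1/N)|\xi_i - \xi_i'|$ follows from a first-order measure expansion along the segment joining the two empirical measures combined with \cref{eq:phi-first}, and Efron--Stein then gives $\Var\phi(\mu_{\boldsymbol\xi}) \leq \frac12\sum_i (M_1/N)^2\,\Expect|\xi_i - \xi_i'|^2 = M_1^2\Var m / N$.

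The gap is in the bias bound, and you have in fact diagnosed it accurately: after the second-order Taylor expansion the remainder involves $\frac{\delta^2\phi}{\delta m^2}(m_t,\cdot,\cdot)$ evaluated at the random midpoint $m_t = (1-t)m + t\mu_{\boldsymbol\xi}$, which is coupled to the samples $\xi_i$, so the off-diagonal pieces do not vanish. Your proposed remedy---a symmetrization that ``repackages'' everything into $\Delta(m_t)$---does not close it. Concretely, splitting $\Expect Q_t$ by exchangeability into diagonal and off-diagonal parts gives
\[
\Expect Q_t = \tfrac{1}{N}\Expect\bigl[K(m_t,\xi_1,\xi_1)\bigr]
+ \tfrac{N-1}{N}\Expect\bigl[K(m_t,\xi_1,\xi_2)\bigr]
- 2\,\Expect\Bigl[\!\int K(m_t,\xi_1,y)\,m(dy)\Bigr]
+ \Expect\Bigl[\!\iint K(m_t,\cdot,\cdot)\,m^{\otimes 2}\Bigr],
\]
and the three mixed/independent-looking terms are \emph{not} equal, precisely because $m_t$ is coupled to $\xi_1,\xi_2$. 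Hypothesis \cref{eq:phi-second} controls $\int K(m',x,x)m(dx) - \iint K(m',x,y)m(dx)m(dy)$ for each fixed (or conditionally fixed) $m'$, i.e., with the integration variables \emph{decoupled} from $m'$; it gives no handle on the coupled expectations above, and no amount of index rearrangement makes the coupling disappear without invoking some further regularity of $K$ in its measure argument. The paper's own proof deals with this by delegating the bias estimate to Theorem 4.2.9(i) of Tse's thesis, where a different and more involved argument (a further leave-one-out interpolation of the \emph{first} derivative, rather than a single second-order expansion around $m$) is used to peel off the troublesome dependence. So your proposal reproduces the paper's approach for the variance but leaves the bias estimate---the part carrying all of the $M_2/(2N)$ content---genuinely unproved.
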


\begin{proof}
We have the decomposition
\[
\Expect \bigl[ |\phi (\mu_{\boldsymbol \xi}) - \phi(m)|^2 \bigr]
= \Var \phi (\mu_{\boldsymbol \xi}) + \bigl( \Expect \bigl[\phi(\mu_{\boldsymbol \xi})\bigr] - \phi(m) \bigr)^2.
\]
Thanks to \cref{eq:phi-first}, the mapping \(\xi^i \mapsto \phi(\mu_{\boldsymbol \xi})\) is \(\frac{M_1}{N}\)-Lipschitz continuous,
so by the Efron--Stein inequality we have
\[
\Var \phi(\mu_{\boldsymbol \xi}) \leq \frac{M_1^2}{N} \Var m.
\]
For the second term we apply the argument of \cite[Theorem 4.2.9 (i)]{tse2019quantitative}
and obtain
\[
\bigl|\Expect \bigl[\phi(\mu_{\boldsymbol \xi}) \bigr] - \phi(m) \bigr| \leq \frac{M_2}{2N}. \qedhere
\]
\end{proof}

\section{Validity of Girsanov transforms}
\label{sec:girsanov-ui}

We prove a lemma similar to \cite[Lemma A.1]{HRSS19} which allows us to justify Girsanov transforms.

\begin{lem}
\label{lem:girsanov-ui}
Let \((\Omega, \mathcal F, (\mathcal F_t)_{t \in [0,T]}, \mathbb P)\) be a filtered probability space.
If \(\beta\), \(\gamma\), \(X\), \(Y: \Omega \times [0,T] \to \mathbb R^d\)
are \(\mathcal F_t\)-adapted continuous stochastic processes satisfying
\[
|\beta_t| + |\gamma_t| \leq C (1 + |X_t| + |V_t| )
\]
almost surely for some constant \(C\),
and if the tuple \((X,V,\beta)\) solves
\begin{align*}
dX_t &= V_t dt, \\
dV_t &= \beta_t dt + \sqrt 2 dW_t
\end{align*}
for an \(\mathcal F_t\)-adapted Brownian \(W_t\)
with \(\Expect \bigl[|X_0|^2 + |V_0|^2\bigr] < +\infty\),
then the exponential local martingale
\[
R_\cdot \coloneqq \exp \biggl( \int_0^\cdot \gamma_s \cdot dW_s
- \frac 12 \int_0^\cdot |\gamma_s|^2 ds \biggr)
\]
is uniformly integrable.
\end{lem}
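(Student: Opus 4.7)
The plan is to localize, apply Girsanov on the localized process, derive a priori moment bounds on $(X,V)$ under the new measure that are uniform in the localization parameter, and conclude via de la Vallée-Poussin.

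More concretely, I would first set the stopping times $\tau_n \coloneqq \inf\{t \in [0,T] : |X_t|^2 + |V_t|^2 \geq n\}$ (with the infimum equal to $T$ if the set is empty). On $[0, T \wedge \tau_n]$, the integrand $|\gamma_s|^2$ is bounded, so Novikov's criterion gives that $R_{\cdot \wedge \tau_n}$ is a true uniformly integrable martingale. Define the probability $\mathbb Q_n \coloneqq R_{T \wedge \tau_n} \cdot \mathbb P$ on $(\Omega, \mathcal F_T)$. By Girsanov's theorem, $\tilde W_t \coloneqq W_t - \int_0^{t \wedge \tau_n} \gamma_s\,ds$ is an $(\mathcal F_t, \mathbb Q_n)$-Brownian motion, and on $[0, T \wedge \tau_n]$,
\[
dX_t = V_t\,dt,\qquad dV_t = \bigl(\beta_t + \sqrt 2\, \gamma_t\bigr)\,dt + \sqrt 2 \,d\tilde W_t.
\]

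The key step is the uniform-in-$n$ energy estimate
\[
\sup_{n \in \mathbb N} \mathbb E^{\mathbb Q_n} \Bigl[\sup_{t \in [0, T \wedge \tau_n]} \bigl(|X_t|^2 + |V_t|^2\bigr)\Bigr] \leq K_T < +\infty.
\]
Since $|\beta_t + \sqrt 2 \gamma_t| \leq C'\bigl(1 + |X_t| + |V_t|\bigr)$ under the hypothesis, applying Itô's formula to $|X_t|^2 + |V_t|^2$ under $\mathbb Q_n$, together with the Burkholder--Davis--Gundy inequality and Grönwall's lemma, yields such a bound $K_T$ depending only on $T$, $C$, $d$ and $\mathbb E[|X_0|^2 + |V_0|^2]$; crucially, $K_T$ is independent of $n$ because $\tau_n$ plays no role in the differential inequality beyond restricting the time horizon. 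This is the only step that requires real care, and is the main (albeit mild) obstacle: one has to check that the Girsanov-induced extra drift $\sqrt 2\,\gamma_t$ preserves the linear-growth structure so that a Grönwall argument goes through uniformly in $n$.

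Once this is established, I compute
\[
\mathbb E \bigl[R_{T \wedge \tau_n} \log R_{T \wedge \tau_n}\bigr]
= \mathbb E^{\mathbb Q_n} \bigl[\log R_{T \wedge \tau_n}\bigr]
= \frac 12 \mathbb E^{\mathbb Q_n} \biggl[\int_0^{T \wedge \tau_n} |\gamma_s|^2\,ds\biggr],
\]
where the stochastic integral against $W$ vanishes in $\mathbb Q_n$-expectation because it is a true martingale on the localized interval (equivalently, it equals $\int_0^{T \wedge \tau_n} \gamma_s \cdot d\tilde W_s + \int_0^{T \wedge \tau_n} |\gamma_s|^2\,ds$ under $\mathbb Q_n$). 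Using the linear growth of $\gamma$ and the uniform estimate above, the right-hand side is bounded by a constant independent of $n$. By the de la Vallée-Poussin criterion, the family $(R_{T \wedge \tau_n})_{n \in \mathbb N}$ is uniformly integrable; since $R_{T \wedge \tau_n} \to R_T$ almost surely, we deduce $\mathbb E[R_T] = \mathbb E[R_0] = 1$. Thus the nonnegative continuous local martingale $R$ is a true martingale on $[0,T]$, and the family $(R_t)_{t \in [0,T]}$ is therefore uniformly integrable as a closed martingale.
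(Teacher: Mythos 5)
Your argument is correct, but it follows a genuinely different route from the one in the paper. The paper never invokes Girsanov or localization: it works entirely under the original measure $\mathbb P$, shows via Itō that $t \mapsto e^{-Ct}R_t(1+|X_t|^2+|V_t|^2)$ is a nonnegative local supermartingale hence (Fatou) a true supermartingale, so $\Expect[R_t\gamma_t^2]$ is controlled, and then applies Itō to the \emph{bounded} supermartingale $R_t/(1+\varepsilon R_t)$ to get the identity
\[
\Expect\biggl[\frac{R_T}{1+\varepsilon R_T}\biggr]
= \frac{1}{1+\varepsilon}
- \Expect\biggl[\int_0^T \frac{\varepsilon R_t^2 \gamma_t^2}{(1+\varepsilon R_t)^3}\,dt\biggr],
\]
from which $\Expect[R_T]=1$ follows by dominated convergence as $\varepsilon \to 0$. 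Your proof instead localizes, changes measure via Girsanov on each $[0,T\wedge\tau_n]$, derives a uniform-in-$n$ second-moment bound on $(X,V)$ under $\mathbb Q_n$ by Grönwall, converts this into a uniform bound on the relative entropy $\Expect[R_{T\wedge\tau_n}\log R_{T\wedge\tau_n}]$, and concludes by de la Vallée-Poussin. Both are valid; what each buys is somewhat different. Yours is the more standard "modular" pattern (localize, transform, estimate under the new measure, push back) and shows explicitly that $R_T$ has finite entropy, which is occasionally a useful byproduct. The paper's argument is slicker and more self-contained: it avoids the need to justify Girsanov on the localized interval (Novikov), avoids the stopping-time passage to the limit, and sidesteps de la Vallée-Poussin by the elementary $\varepsilon$-truncation trick. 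One small point to make explicit in your version: uniform boundedness of $\Expect[R_n\log R_n]$ (which can be negative) gives uniform boundedness of $\Expect[R_n\log^+ R_n]$ only after noting $x\log^- x \le 1/e$; that nonnegative superlinear bound is what de la Vallée-Poussin actually requires.
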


\begin{proof}
It suffices to verify \(\Expect [ R_T ] = 1\).
Put \(M_\cdot = R_\cdot (1 + |X_\cdot|^2 + |V_\cdot|^2 )\).
By Itō's formula, the local semimartingale satisfies
\[
dM_t = 2 R_t \bigl(X_t \cdot Y_t + (\beta_t + \sqrt 2 \gamma_t) \cdot Y_t + 1 \bigr) dt
+ R_t \bigl((1 + |X_t|^2 + |Y_t|^2) \gamma_t + 2 \sqrt 2 Y_t\bigl) \cdot dW_t.
\]
Using the uniform linear growth condition of \(\beta\), \(\gamma\),
we can find a constant \(C\) such that \(t \mapsto e^{-Ct} M_t\)
is a local supermartingale.
But \(e^{-Ct} M_t \geq 0\).
So by Fatou's lemma \(t \mapsto e^{-Ct} M_t\) is really a supermartingale
and this yields \(\Expect [M_t] \leq e^{Ct} \Expect[M_0]\).
The Itō's formula for \(R_\cdot\) writes
\[
dR_t = R_t \gamma_t \cdot dW_t.
\]
So for \(\varepsilon > 0\) the bounded supermartingale \(\frac{R_\cdot}{1 + \varepsilon R_\cdot}\) satisfies
\[
d \frac{R_t}{1 + \varepsilon R_t} = - \frac{\varepsilon R_t^2 \gamma_t^2}{(1 + \varepsilon R_t)^3} dt + \frac{R_t}{(1 + \varepsilon R_t)^2} \gamma_t \cdot dW_t.
\]
Taking expectations on both sides, we obtain
\[
\Expect \biggl[ \frac{R_T}{1 + \varepsilon R_T} \biggr]
= \frac{1}{1+\varepsilon} - \Expect \biggl[\int_0^T \frac{\varepsilon R_t^2 \gamma_t^2 }{(1 + \varepsilon R_t)^3} dt \biggr].
\]
Using the bound \(\frac{\varepsilon R_t^2 \gamma_t^2}{(1 + \varepsilon R_t)^3} \leq R_t \gamma_t^2 \leq CM_t\),
we take the limit \(\varepsilon \to 0\) by the dominated convergence theorem
and obtain \(\Expect [R_T] = 1\).
\end{proof}

\bibliographystyle{plain}
\bibliography{ref}

\end{document}